\title{Maximal Chow constant and cohomologically constant fibrations\vspace{-2ex}}
\author{Kristin DeVleming and David Stapleton\vspace{-2ex}}
\definecolor{mycolor}{RGB}{146, 214, 203}
\definecolor{myothercolor}{RGB}{179, 215, 232}
\newtheorem{theorem}{Theorem}
\numberwithin{theorem}{section}
\newtheorem{corollary}[theorem]{Corollary}
\newtheorem{lemma}[theorem]{Lemma}
\newtheorem{proposition}[theorem]{Proposition}
\newtheorem{Lthm}{Theorem}
\newtheorem{Lcon}[Lthm]{Conjecture}
\newtheorem{Lprop}[Lthm]{Proposition}
\theoremstyle{definition}
\newcommand{\hr}[2]{\hyperref[#1]{#2}}
\theoremstyle{definition}
\newtheorem{remark}[theorem]{Remark}
\newtheorem{construction}[theorem]{Construction}
\newtheorem{problem}[theorem]{Problem}
\newtheorem{example}[theorem]{Example}
\newtheorem{definition}[theorem]{Definition}
\def\ZZ{{\mathbb Z}}
\def\QQ{{\mathbb Q}}
\def\AA{{\mathbb A}}
\def\NN{{\mathbb N}}
\def\CC{{\mathbb C}}
\def\PP{{\mathbf{P}}}
\def\Rb{{\mathbf{R}}}
\def\Oc{{\mathcal O}}
\def\Vc{{\mathcal V}}
\def\Dc{{\mathcal D}}
\def\Vbar{{\overline{V}}}
\def\Xbar{{\overline{X}}}
\def\Ybar{{\overline{Y}}}
\def\fbar{{\overline{f}}}
\def\hbar{{\overline{h}}}
\def\Sbar{\overline{S}}
\def\alb{{\mathrm{alb}}}
\def\deg{{\mathrm{deg}}}
\def\rat{{\mathrm{rat}}}
\def\CH{{\mathrm{CH}}}
\def\Alb{{\mathrm{Alb}}}
\def\Spec{{\mathrm{Spec}}}
\def\Pic{{\mathrm{Pic}}}
\def\Hilb{{\mathrm{Hilb}}}
\def\Var{{\mathrm{Var}}}
\def\tors{{\mathrm{tors}}}
\def\Chow{{\mathrm{Chow}}}
\def\bir{{\mathrm{bir}}}
\def\hpo{{H^{p,0}}}
\def\honeo{{H^{1,0}}}
\def\sym{{\mathrm{Sym}}}
\def\Fbar{{\overline{F}}}
\def\DDom{{\Dc\mathrm{Dom}}}
\def\dra{{\dashrightarrow}}
\def\ra{{\rightarrow}}
\def\cl{{\colon}}
\def\Roi{{{Ro\u{\i}}}}
\begin{document}
\maketitle

\section*{Introduction}
\thispagestyle{empty}
Motivated by the study of maximal rationally connected fibrations, introduced by Koll\'ar, Miyaoka, and Mori in \cite{KollarMiyaokaMori} and Campana in \cite{Campana1,Campana2}, we study different notions of fibrations where instead of requiring that the general fibers be rationally connected, we require different types of birational simplicity. The birational invariants we consider are the Chow groups of 0-cycles and the groups of holomorphic $p$-forms. The main result of this paper is the construction of maximal Chow constant and cohomologically constant fibrations.

Consider a fibration (Def.~\ref{fibration}) of smooth complex projective varieties
\[
f\cl X\ra Y.
\]
We say $f$ is a \textbf{Chow constant fibration} if $f_*\cl \CH_0(X)\ra \CH_0(Y)$ is an isomorphism. We say a fibration is \textbf{cohomologically constant} if $f^*\cl H^{p,0}(Y)\cong H^{p,0}(X)$ is an isomorphism for all $p$. These definitions extend to the case when $f$ is a rational map.

\begin{Lthm}\label{MaxConstFibs}
Any smooth complex projective variety $X$ admits a maximal Chow constant (resp. cohomologically constant) fibration:
\[
\eta\cl X\dra Y.
\]
$Y$ is defined up to birational isomorphism and satisfies the following universal property: another fibration $\phi\cl X\dra Z$ is Chow constant (resp. cohomologically constant) $\iff \eta$ factors through $\phi$. Moreover, $\eta$ is almost holomorphic (i.e. there is a nonempty open set $U\subset Y$ over which $\eta$ is proper).
\end{Lthm}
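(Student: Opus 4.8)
The strategy is the standard "assemble the maximal fibration from all fibrations of the given type" argument, adapted from the construction of the MRC fibration. The key points I need: (1) a way to compare two fibrations and take a "join"; (2) a bound that prevents the join process from going on forever; (3) verification that the limiting fibration is itself of the desired type; (4) almost-holomorphy.

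First I would set up the combinatorics. Given two Chow constant (resp. cohomologically constant) rational fibrations $\phi_i\cl X\dra Z_i$, consider the map to the product $X\dra Z_1\times Z_2$ and let $Z_{12}$ be the image (or rather the base of the fibration obtained by taking Stein-type factorization of a resolution); I want to show $X\dra Z_{12}$ is again of the same type. For the cohomological case this is nearly immediate: $H^{p,0}(Z_i)\hookrightarrow H^{p,0}(Z_{12})\hookrightarrow H^{p,0}(X)$ and the outer composite is an isomorphism for, say, $i=1$, so all are isomorphisms. For the Chow case I would argue similarly with the push-forward maps $\CH_0(X)\to\CH_0(Z_{12})\to\CH_0(Z_i)$, using that $\CH_0$ push-forward along a dominant map of the relevant kind is surjective and that the composite is an isomorphism; one must check that $\CH_0(X)\to\CH_0(Z_{12})$ is injective, which follows because it has a one-sided inverse up to the isomorphism with $\CH_0(Z_1)$. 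So the collection of such fibrations is directed under refinement.

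Next I need the process to stabilize. Here I would invoke a dimension/rank bound: among all Chow constant (resp. cohomologically constant) fibrations $X\dra Z$, the dimension of $Z$ is bounded above by $\dim X$, so I may choose one, call it $\eta\cl X\dra Y$, with $\dim Y$ maximal (and, in the cohomological case, one could alternatively maximize $\sum_p \dim H^{p,0}(Z)$, which is also bounded). I then claim this $\eta$ is the maximal one: given any other fibration $\phi\cl X\dra Z$ of the type, form the join $Y'\to Y, Z$ as above; $X\dra Y'$ is of the type and dominates $Y$, so by maximality $\dim Y'=\dim Y$, forcing $Y'\dra Y$ to be birational, i.e. $\eta$ factors through $\phi$. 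Conversely, if $\eta$ factors through $\phi$, then $\phi$ is of the type because any fibration through which a Chow constant (resp. cohomologically constant) fibration factors is again of that type — this is the easy direction, using that $f_*$ or $f^*$ factoring through $\phi$ and being an isomorphism forces the relevant map for $\phi$ to be injective and surjective respectively (for the Chow direction one also needs surjectivity of $\CH_0(X)\to\CH_0(Z)$, which holds for any dominant morphism after resolving).

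Finally, almost-holomorphy. I would take a resolved model $f\cl X'\to Y'$ birational to $\eta$ with $f$ a genuine morphism. Over a dense open $U\subset Y'$ the fibers are smooth and irreducible; I want to know that the general fiber $F$ has $\CH_0(F)=\ZZ$ (resp. $H^{p,0}(F)=0$ for $p>0$), which should follow from the Chow-constant (resp. cohomologically constant) hypothesis via a specialization/decomposition-of-the-diagonal argument — essentially, $f_*\cl\CH_0(X')\to\CH_0(Y')$ being an isomorphism spreads out to say the general fiber has trivial $\CH_0$; in the cohomological case the general fiber has no holomorphic forms by the analogous spreading-out of $f^*$ being surjective. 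Then I argue that $\eta$ does not contract any divisor to a point of $Y'$ over $U$: if it did, the exceptional divisor would dominate a subvariety, and on the generic fiber this would contradict... here is where I expect the main obstacle. The delicate point is ruling out that $\eta$ is not almost holomorphic — i.e. showing the indeterminacy locus doesn't dominate $Y$ — which for the MRC fibration uses properness of the Chow variety / Hilbert scheme of rationally connected subvarieties and a clever argument of Kollár; for Chow-constant and cohomologically-constant fibers I will need an analogous "boundedness of good subvarieties through a general point" statement, presumably via a $\DomFam$/$\HTFam$-type construction already set up in the paper (the notation \DomFam, \HTFam, \Zbarh in the preamble suggests exactly such machinery), and then repeat Kollár's argument: the subvariety through a general point of $X$ that maps to the $\eta$-fiber is unique (else one could enlarge $Y$), and uniqueness plus properness of the parameter space gives a morphism on a dense open, i.e. almost-holomorphy. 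Getting this uniqueness and the properness of the right parameter space is the crux.
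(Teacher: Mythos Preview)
Your approach has a fundamental orientation error. The theorem asks for the \emph{coarsest} Chow constant (resp.\ cohomologically constant) fibration: the universal property says $\eta$ factors through $\phi$, i.e.\ there is a rational map $\psi\cl Z\dra Y$ with $\eta=\psi\circ\phi$, so the fibers of $\eta$ \emph{contain} those of $\phi$ and $\dim Y\le\dim Z$. Your ``join'' $Z_{12}\subset Z_1\times Z_2$ goes the other way: it produces a \emph{finer} fibration (smaller fibers, larger base). It is true that $X\dra Z_{12}$ is again Chow/cohomologically constant, but this is the easy direction---being Chow/cohomologically constant is preserved under passing to refinements, and in particular the identity $X\to X$ is always of the type. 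Hence maximizing $\dim Y$ yields $Y=X$, which is vacuous. Read literally, your argument proves that every Chow constant $\phi$ factors through your $\eta$, which is both the wrong conclusion and trivially satisfied by $\eta=\id_X$.

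The genuine difficulty is the opposite: given two Chow constant fibrations $\phi_1,\phi_2$, one needs a Chow constant $\psi$ whose fibers contain those of both, and the equivalence relation generated by the fibers of $\phi_1$ and $\phi_2$ is not a priori a fibration. The paper supplies the missing structural input in two different ways. For the Chow case, the key is Theorem~\ref{thm:chowconstantfibers}: a fibration is Chow constant if and only if its general fibers are Chow constant subvarieties. One then applies \Roi tman's quotient construction (Proposition~\ref{prop:maximalWfibration}) to the rational-equivalence relation $W\subset X\times X$: the unique maximal irreducible component $W_0\supset\Delta_X$ defines $\eta$ via $x\mapsto [(W_0)_x]\in\Chow(X)$. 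For the cohomological case, the key is Proposition~\ref{descendingForms}: a fibration is cohomologically constant if and only if its general fibers lie in the leaves of Voisin's foliation $\Vc_X$. One then parametrizes $\Vc_X$-constant subvarieties in $\Hilb(X)$, and integrability of $\Vc_X$ is exactly what lets two such subvarieties through a general point be merged into a larger one still contained in the leaf (Theorem~\ref{thm:maxDconstantfib}).

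Your almost-holomorphic discussion also conflates ``constant'' with ``trivial'': the general fiber $F$ of a Chow constant fibration need \emph{not} satisfy $\CH_0(F)\cong\ZZ$ (e.g.\ a pencil of quartic K3 surfaces in $\PP^3$), so your spreading-out claim is false as stated. The paper's argument is much shorter: by Lemma~\ref{ruledness}, if $\eta$ were not almost holomorphic then $Y$ would be uniruled, and composing $\eta$ with the MRC fibration of $Y$ would produce a strictly coarser fibration that is still Chow (resp.\ cohomologically) constant by Corollary~\ref{cor:compositionofchowfib} (resp.\ Corollary~\ref{cor:compositionofcohfib}), contradicting maximality.
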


Chow groups of 0-cycles have played an important role in algebraic geometry. Already in the 60s, Mumford \cite{Mum0} observed that even in dimension two, $\CH_0(X)$ can be quite exotic, and proved that if $X$ is a complex K3 surface, then $\CH_0(X)$ is infinite dimensional in a precise sense. \Roi tman proved \cite{RoitmanTorsion} the torsion in $\CH_0(X)$ is isomorphic to the torsion subgroup of $\Alb(X)$. Colliot-Th\'el\`ene, Voisin, and others (see e.g. \cite{VoiCol}) have made major progress in understanding rationality questions by considering specializations of ``universally $\CH_0$-trivial varieties", or equivalently varieties which admit integral decompositions of their diagonals. Beauville and Voisin \cite{BeaVoi} showed that given a K3 surface, there is a distinguished degree 1 cycle $c_X\in \CH_0(X)$ such that many geometrically defined 0-cycles are a multiple of $c_X$. There has been some work in understanding a similar picture for higher dimensional hyperK\"ahler manifolds. Huybrechts \cite{HuyChow} has initiated a study of ``Chow constant subvarieties", i.e. subvarieties $V\subset X$ such that the image of $\CH_0(V)$ in $\CH_0(X)$ is isomorphic to $\ZZ$. Vial \cite{Vial} has studied fibrations similar to the ones considered here, especially from the motivic perspective.

On the other hand, the vector spaces $H^{p,0}(X)$ of holomorphic $p$-forms are some of a variety's most useful birational invariants. From the perspective of this paper, the motivation for considering $p$-forms along with $0$-cycles is Bloch's conjecture.

\begin{Lcon}[Bloch's Conjecture]\label{Bloch} If $X$ is a smooth projective complex variety, then $\CH_0(X)=\ZZ\iff H^{p,0}(X)=0$ for all $p>0$.
\end{Lcon}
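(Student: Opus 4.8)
Since the final statement is labeled a \emph{conjecture}, I will be candid about what is actually within reach: one of the two implications is a theorem that I can prove, while the reverse implication is the deep open content of Bloch's conjecture, and I do not expect to establish it in general. For the accessible direction, $\CH_0(X)=\ZZ \Rightarrow H^{p,0}(X)=0$ for all $p>0$, the plan is to argue by a decomposition of the diagonal in the style of Mumford and Bloch--Srinivas. The hypothesis says every zero-cycle of degree $0$ is rationally trivial, i.e. $\CH_0(X)$ is supported on a single point $x_0$. Spreading out the resulting universal rational equivalence over the generic point of $X$ (base-changing the diagonal to $\CC(X)$ and clearing denominators) produces, for some integer $N>0$, a rational equivalence in $\CH^n(X\times X)_\QQ$, $n=\dim X$, of the shape
\[
N\,[\Delta_X] \;=\; N\,[X\times\{x_0\}] \;+\; [Z],
\]
where $Z$ is supported on $D\times X$ for a proper closed subset $D\subsetneq X$.

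Next I would let this identity of correspondences act on $H^{p,0}(X)$; since this is a $\CC$-vector space, inverting $N$ is harmless, so it suffices to analyze the two terms on the right. The diagonal acts as the identity, while the term $X\times\{x_0\}$ has Künneth type concentrated in $H^0(X)\otimes H^{2n}(X)$ and so projects onto $H^0$, annihilating $H^{p,0}(X)$ for every $p>0$. The remaining term $Z$ is supported on $D\times X$; by the standard support estimate for the action of correspondences on holomorphic forms (Mumford, Bloch--Srinivas), after passing to a resolution $\widetilde D\to D$ its action on $H^{p,0}(X)$ factors through the holomorphic forms of the smooth projective variety $\widetilde D$ of dimension $<n$, which cannot contribute a nonzero $(p,0)$-class back to $X$ for $p>0$. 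Thus the identity acts as $0$ on $H^{p,0}(X)$ for $p>0$, forcing $H^{p,0}(X)=0$ there, which is the desired implication.

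The reverse implication, $H^{p,0}(X)=0\ \forall p>0 \Rightarrow \CH_0(X)=\ZZ$, is exactly where the difficulty lies, and I expect it to be the genuine obstacle: it is the hard direction of Bloch's conjecture, which remains open in general. For curves it is classical, and for surfaces it is the statement that $q=p_g=0$ implies $\CH_0$ is trivial; this is known for surfaces that are not of general type (Bloch--Kas--Lieberman) and for many families of general-type surfaces with $p_g=0$ through case-by-case Hodge-theoretic and automorphism arguments (Voisin, Inose--Mizukami, and others), but no uniform proof exists even in dimension two. The structural reason for the impasse is that the hypothesis is a statement in Hodge theory — vanishing of the relevant transcendental cohomology — whereas the conclusion concerns rational equivalence of algebraic cycles, and there is presently no mechanism, short of the conjectural Bloch--Beilinson framework, that converts the former into the latter. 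I would therefore present the forward implication as the provable content and flag the converse as the central conjecture it is.
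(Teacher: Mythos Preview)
Your proposal is correct and matches the paper's treatment: the statement is presented as a conjecture, the paper explicitly notes that only the forward implication is known, and the paper's own proof of that implication (appearing as the special case of Lemma~\ref{lem:chowconstantdescentofforms}) proceeds via the same Bloch--Srinivas decomposition-of-the-diagonal argument acting on $H^{p,0}$ that you outline. Your acknowledgment that the reverse implication is open is exactly in line with the paper.
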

\noindent The forward implication is known, but the opposite is known in very few examples (for surfaces with $\kappa(X) < 2$ and a few classes of general type surfaces). There are several generalizations of Bloch's conjecture. For this paper the most relevant generalization is
\begin{Lcon}[{see \cite[Conj. 1.11]{VoisinChowBook}}]\label{GenBloch}
If $H^{p,0}(X)=0$ for all $p>m$ then $\CH_0(X)$ is supported on an $m$-dimensional algebraic subset $V\subset X$, i.e. $\CH_0(V)$ surjects onto $\CH_0(X)$.
\end{Lcon}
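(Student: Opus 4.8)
The plan is to translate the support statement into a decomposition of the diagonal and then to manufacture that decomposition from the Hodge-theoretic hypothesis. Write $n=\dim X$ and work with $\QQ$-coefficients throughout (the passage between rational and integral statements being informed by \Roi tman's theorem on torsion). By the Bloch--Srinivas generic-point argument, $\CH_0(X)_\QQ$ is supported on an $m$-dimensional $V\subset X$ if and only if the diagonal admits a decomposition
\[
\Delta_X = Z_1 + Z_2 \in \CH_n(X\times X)_\QQ,
\]
where $Z_1$ is supported on $D\times X$ for some divisor $D\subset X$ and $Z_2$ is supported on $X\times V$: one direction spreads out the rational equivalence that realizes the generic diagonal point on $V_{\CC(X)}$, and the other restricts the decomposition over the generic point of the first factor, where the $Z_1$-term disappears for support reasons. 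So the whole problem is to produce such a decomposition, and since Bloch--Srinivas already supplies one once the support is \emph{assumed}, what is genuinely at stake is the converse passage, from cohomology back to rational equivalence.

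For the cohomological input I would read the hypothesis $H^{p,0}(X)=0$ for all $p>m$ as the statement that the holomorphic forms --- precisely the pieces of the transcendental cohomology that obstruct support on a low-dimensional subvariety --- are concentrated in degrees $\le m$. The known (easy) direction makes this transparent: in a decomposition as above, the summand $Z_1$ over a divisor contributes nothing to any $H^{p,0}(X)$, because a class supported in positive codimension has no $(p,0)$-part, while $Z_2$ over an $m$-dimensional $V$ automatically kills $H^{p,0}(X)$ for $p>m$. To run this backwards I would invoke the generalized Hodge conjecture to realize the transcendental cohomology of $X$, whose holomorphic forms vanish above degree $m$, on an algebraic subvariety $V$ of the expected dimension $m$, and then try to lift this geometric support from cohomology to $\CH_0$.

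The step I expect to be the real obstacle --- and the reason the statement is still only a conjecture --- is exactly this last lift, from cohomological triviality to triviality in $\CH_0$ modulo the allowed support. This is the heart of the Bloch--Beilinson philosophy: one wants a functorial filtration $F^\bullet \CH_0(X)_\QQ$ whose graded pieces are governed by, and only by, the cohomology of $X$, so that homologically trivial correspondences act nilpotently and the vanishing of $H^{p,0}$ above level $m$ forces the part of $\Delta_X$ acting on $\CH_0$ beyond dimension $m$ to be rationally equivalent to a cycle on $X\times V$. No such filtration is known in general, and the generalized Hodge conjecture used to produce $V$ is itself open. I would therefore not expect an unconditional proof by these means; rather, I would aim for a proof conditional on Bloch--Beilinson and the generalized Hodge conjecture, or for unconditional results in cases where the Chow motive of $X$ is directly accessible --- for instance when finite-dimensionality in the sense of Kimura--O'Sullivan is available, or in low dimension where the transcendental Hodge structure is realized by an explicit geometric substratum, as in the established cases of Bloch's conjecture for surfaces with $p_g=0$.
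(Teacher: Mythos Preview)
The statement you were asked to address is labeled in the paper as a \emph{conjecture} (Conjecture~\ref{GenBloch}, attributed to Voisin), not a theorem; the paper makes no attempt to prove it and indeed uses it only as motivation and as a conditional hypothesis in Proposition~D. So there is no ``paper's own proof'' to compare against.

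That said, your write-up is not really a proof proposal either, and you seem to know it: you correctly identify the Bloch--Srinivas decomposition-of-the-diagonal reformulation, correctly note that the easy direction (support $\Rightarrow$ vanishing of $H^{p,0}$ for $p>m$) goes through, and correctly isolate the genuine obstruction in the reverse direction --- namely that passing from cohomological vanishing back to a statement about $\CH_0$ would require something like a Bloch--Beilinson filtration together with the generalized Hodge conjecture, neither of which is available. This is an accurate diagnosis of why the statement remains open, and it matches the role the conjecture plays in the paper. Just be clear in your own mind that what you have written is an explanation of the difficulty, not a proof sketch; there is no gap to fill because there is, at present, no proof.
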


\noindent The following proposition explains the relationship between Chow constant and cohomologically constant fibrations and the significance of Conjecture~\ref{GenBloch} to our setting.

\begin{Lprop}
Let $X$ be a smooth complex projective variety and let $Y$ be the base of its maximal Chow constant fibration.
\begin{enumerate}
\item Every Chow constant fibration of $X$ is cohomologically constant.
\item The dimension of $Y$ equals the minimum dimension of an algebraic subset $V\subset X$ such that $\CH_0(X)$ is supported on $V$.
\item If Conjecture~\ref{GenBloch} is true then
\[
\dim(Y)=\max\{p|H^{p,0}(X)\ne 0\},
\]
and $Y$ coincides with the maximal cohomologically constant fibration. Thus, conjecturally, a fibration is Chow constant $\iff$ it is cohomologically constant.
\end{enumerate}
\end{Lprop}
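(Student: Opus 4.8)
I'll organize the plan around the three parts of the Proposition.

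For part (1), the plan is to use the standard functoriality of the cycle class map / Abel–Jacobi-type arguments: if $f\cl X\dra Z$ is Chow constant, i.e. $f_*\cl \CH_0(X)\to\CH_0(Z)$ is an isomorphism, then I want to show $f^*\cl H^{p,0}(Z)\to H^{p,0}(X)$ is an isomorphism. First I would reduce to the case where $f$ is a morphism by resolving indeterminacy (this is harmless since $H^{p,0}$ and $\CH_0$ are birational invariants of smooth projective varieties, so I may replace $X$ by a smooth birational model on which $f$ is regular). Injectivity of $f^*$ on $H^{p,0}$ is automatic for a dominant morphism of smooth projective varieties. For surjectivity, the key input is the action of correspondences on holomorphic forms together with the fact that a variety with $\CH_0$ isomorphic to that of a point (after base change to the generic point, this is the general fiber being $\CH_0$-trivial) has no holomorphic forms killed by ``spreading out.'' Concretely, I would invoke the Bloch–Srinivas ``decomposition of the diagonal'' technique: since $f$ is Chow constant, the relative $0$-cycle $\Delta_X - (\text{pullback of the relative diagonal of }Z)$ is, generically on $Z$, rationally trivial; spreading out gives a decomposition of $\Delta_X$ in $\CH_{\dim X}(X\times X)$ modulo cycles supported over a divisor in the second factor pulled back from $Z$, plus a cycle supported on $D\times X$. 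Acting on $H^{p,0}(X)$, the first piece shows every holomorphic $p$-form on $X$ is pulled back from $Z$ modulo forms ``supported on $D$'', and an induction on dimension (or a direct argument that $H^{p,0}$ of a divisor cannot contribute new classes in the top range) closes this up. I expect the precise bookkeeping in this spreading-out step to be the main technical obstacle, though it is by now fairly standard (cf. Bloch–Srinivas, Voisin).

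For part (2), the plan is to show $\dim Y$ equals $m_{\min} := \min\{\dim V : V\subset X \text{ closed}, \CH_0(V)\twoheadrightarrow\CH_0(X)\}$ by proving both inequalities. For $\dim Y \ge m_{\min}$: given the maximal Chow constant fibration $\eta\cl X\dra Y$, which by Theorem~\ref{MaxConstFibs} is almost holomorphic, I would take $V$ to be (the closure of) a general fiber of $\eta$ together with... no — rather, I want a $V$ mapping onto $\CH_0(X)$ of dimension $\dim Y$; take $V = \eta^{-1}(W)$ for $W\subset Y$ a sufficiently general complete intersection curve-to-point... the cleanest approach: since $\eta$ is Chow constant, $\CH_0(X)\cong\CH_0(Y)$, and $\CH_0(Y)$ is supported on any subvariety of $Y$ that carries all its $0$-cycles — but $\CH_0(Y)$ itself may not be supported in lower dimension, so I claim $\CH_0(X)$ is supported on $\eta^{-1}(\text{point})$'s worth... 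Let me instead argue: the fibers of $\eta$ are $\CH_0$-trivial in the sense that $\CH_0$ of the general fiber maps to $\ZZ$ in $\CH_0(X/Y)$; picking a section-like subvariety, i.e. a subvariety $V\subset X$ with $\eta|_V\cl V\to Y$ generically finite, a Bloch–Srinivas argument shows $\CH_0(V)\twoheadrightarrow\CH_0(X)$ because $0$-cycles in the fibers are rationally equivalent to $0$-cycles on $V$. Such $V$ exists with $\dim V = \dim Y$ (spread out a multisection), giving $m_{\min}\le\dim Y$. For the reverse $\dim Y\le m_{\min}$: given $V\subset X$ of dimension $m_{\min}$ with $\CH_0(V)\twoheadrightarrow\CH_0(X)$, I would produce from $V$ a Chow constant fibration of $X$ of base dimension $\le m_{\min}$ — e.g. desingularize $V$, apply $\Alb$ or iterate, or more directly: the existence of such $V$ forces any Chow constant fibration to have base dimension $\le\dim V$ once one knows... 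Actually the sharpest route uses the universal property: I want to build \emph{some} Chow constant $\phi\cl X\dra Z$ with $\dim Z\le m_{\min}$, and then Theorem~\ref{MaxConstFibs} gives $\eta$ factoring through... no, that's the wrong direction for bounding $\dim Y$ from above via factorization. Instead: if $\CH_0(X)$ is supported on $V$ with $\dim V = m_{\min}$, then for the maximal Chow constant fibration $\eta\cl X\dra Y$, restricting to $V$ gives $\CH_0(V)\twoheadrightarrow\CH_0(X)\xrightarrow{\sim}\CH_0(Y)$, so $\eta|_V\cl V\dra Y$ is surjective on $\CH_0$; but a dominant map from a variety of dimension $m_{\min}$ to $Y$ that is surjective on $\CH_0$... does not immediately bound $\dim Y$. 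The real point must be: if $\eta|_V$ is \emph{not} dominant, replace $Y$ by $\overline{\eta(V)}$ and check $X\dra\overline{\eta(V)}$ is still Chow constant (it is, by the surjectivity just noted plus a decomposition-of-diagonal argument showing injectivity), contradicting maximality unless $\overline{\eta(V)}=Y$; hence $\dim Y\le\dim V = m_{\min}$. I expect reconciling these two directions cleanly — especially verifying the intermediate fibrations are genuinely Chow constant, not just $\CH_0$-surjective — to be the delicate part.

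For part (3), assuming Conjecture~\ref{GenBloch}, I would combine part (2) with the conjecture applied to $X$: let $p_{\max} := \max\{p : H^{p,0}(X)\ne 0\}$. Conjecture~\ref{GenBloch} (with $m = p_{\max}$, valid since $H^{p,0}(X)=0$ for $p>p_{\max}$) gives that $\CH_0(X)$ is supported on some $V$ with $\dim V = p_{\max}$, so by part (2), $\dim Y\le p_{\max}$. For the reverse inequality, the maximal Chow constant fibration is cohomologically constant by part (1), so $\eta^*\cl H^{p,0}(Y)\xrightarrow{\sim}H^{p,0}(X)$ for all $p$; since $H^{p_{\max},0}(X)\ne 0$ we get $H^{p_{\max},0}(Y)\ne 0$, forcing $\dim Y\ge p_{\max}$. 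Hence $\dim Y = p_{\max} = \max\{p : H^{p,0}(X)\ne 0\}$. Finally, to see $Y$ coincides with the base of the maximal cohomologically constant fibration: every Chow constant fibration is cohomologically constant by (1), so the maximal cohomologically constant fibration factors $X\dra Y_{\mathrm{coh}}\dra$? — rather $\eta$ factors through the maximal cohomologically constant one, $X\dra Y_{\mathrm{coh}}\dra Y$; conversely I must show $X\dra Y_{\mathrm{coh}}$ is Chow constant, which again uses Conjecture~\ref{GenBloch}: $H^{p,0}$ of the fibers of $X\dra Y_{\mathrm{coh}}$ vanish in positive degree after spreading out, so by the conjecture their $\CH_0$ is trivial, making the fibration Chow constant; then the universal property forces $Y_{\mathrm{coh}}$ birational to $Y$. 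The main obstacle here is the last step — deducing Chow-triviality of the general fiber from cohomological constancy requires applying the generalized Bloch conjecture in families / to the generic fiber, which needs a clean statement over non-closed fields or a specialization argument; I would handle it by base-changing to the function field of $Y_{\mathrm{coh}}$ and invoking (the geometric-generic-fiber version of) the conjecture.
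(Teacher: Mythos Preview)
Your approach to part~(1) via Bloch--Srinivas is correct and is exactly what the paper does (Lemma~\ref{lem:chowconstantdescentofforms}): reduce to a regular map with a section, decompose the graph, and observe that the piece supported on $D\times X$ acts trivially on $H^{p,0}$.

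There is a genuine gap in part~(2), in the direction $\dim Y\le m_{\min}$. Your proposed move ``replace $Y$ by $\overline{\eta(V)}$ and check $X\dra \overline{\eta(V)}$ is still Chow constant'' does not make sense: if $\overline{\eta(V)}\subsetneq Y$, there is no map $X\dra \overline{\eta(V)}$ at all, since $\eta$ surjects onto $Y$. What you actually get is that $\CH_0(Y)$ is supported on the proper subvariety $\overline{\eta(V)}$, but turning this into a further Chow constant fibration of $Y$ (to contradict maximality) would require the very statement you are proving, applied to $Y$ --- so the argument is circular. The paper avoids this by working directly with the equivalence relation: from $V$ one builds $W_V=\{(v,x)\in V\times X:v=x\text{ in }\CH_0(X)\}$, finds an irreducible component $W_1$ dominating $X$ under the second projection, and observes that the images $p_2((W_1)_v)$ are Chow constant subvarieties of dimension $\ge n-d$ through a general point of $X$. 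This forces the fibers of the maximal fibration to have dimension $\ge n-d$, hence $\dim Y\le d$. The key idea you are missing is to produce \emph{large Chow constant subvarieties through general points} rather than to manipulate the target.

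In part~(3), your computation $\dim Y=p_{\max}$ is fine, but the argument for $Y\simeq_{\mathrm{bir}} Y_{\mathrm{coh}}$ is wrong as stated: you claim ``$H^{p,0}$ of the fibers of $X\dra Y_{\mathrm{coh}}$ vanish,'' but that is the definition of cohomologically \emph{trivial}, not cohomologically \emph{constant} (e.g.\ a pencil of quartic K3s on $\PP^3$ is cohomologically constant with fibers having $H^{2,0}\ne 0$). The fix is simpler than what you attempt: since $\eta$ is cohomologically constant (part~(1)), the universal property gives a factorization $X\dra Y\dra Y_{\mathrm{coh}}$, so $\dim Y_{\mathrm{coh}}\le\dim Y=p_{\max}$; and $H^{p_{\max},0}(Y_{\mathrm{coh}})\cong H^{p_{\max},0}(X)\ne 0$ gives $\dim Y_{\mathrm{coh}}\ge p_{\max}$. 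Equality of dimensions plus the fact that $Y\dra Y_{\mathrm{coh}}$ is a fibration forces it to be birational, and then both universal properties coincide.
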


We give some examples and applications which arise in the study of these fibrations. First, we show that being a Chow constant fibration has consequences on the Chow group of the generic fibers.

\begin{Lprop}\label{famsosurfaces}
Let $X$ be a smooth projective threefold with a Chow constant fibration over a curve $B$, and let $\xi=\CC(B)$ be the function field of $B$. Then, there is a divisor $D\subset X$ such that $\CH_0(X_\xi)\otimes \QQ$ is supported on $D_\xi$. Thus, $\CH_0(X_\xi)\otimes \QQ$ is finite dimensional in the sense of Mumford.
\end{Lprop}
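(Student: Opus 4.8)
The plan is to reduce the statement to a decomposition of the relative diagonal of $X$ over $B$ and to feed that decomposition into the easy direction of the Bloch--Srinivas method over the function field $\xi$. Write $S := X_\xi$, a smooth projective surface over $\xi$ whose function field is $\xi(S) = \CC(X)$. It suffices to produce a divisor $D\subset X$ dominating $B$, an integer $N>0$, and a decomposition
\[
N\cdot\Delta_S \;=\; \gamma_1 + \gamma_2 \qquad\text{in } \CH_2(S\times_\xi S)\otimes\QQ,
\]
with $\gamma_1$ supported on $S\times_\xi D_\xi$ and $\gamma_2$ supported on $E_\xi\times_\xi S$ for some divisor $E\subset X$. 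Letting this correspondence act on $0$-cycles, $\gamma_2$ annihilates every $0$-cycle supported off the curve $E_\xi$ while $\gamma_1$ carries everything into the image of $\CH_0(D_\xi)$; hence $\CH_0(S)\otimes\QQ$ is supported on the curve $D'_\xi$ with $D' := D\cup E$. Since the degree-$0$ part of $\CH_0$ of a curve is dominated by a product of Jacobians, this is exactly finite-dimensionality of $\CH_0(X_\xi)\otimes\QQ$ in the sense of Mumford. To obtain the decomposition one invokes the Bloch--Srinivas spreading-out lemma over $\xi(S)=\CC(X)$: it is enough that the tautological $0$-cycle $\delta_S\in\CH_0(S_{\xi(S)})\otimes\QQ$ be $\QQ$-rationally equivalent to a cycle on $D_{\xi(S)}$, after which $N\Delta_S$ minus a cycle supported on $S\times_\xi D_\xi$ vanishes over a nonempty open of $S$, and spreading this out produces the displayed decomposition (the divisor $E$ absorbing the complement).

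The input from Chow-constancy is organized around a multisection. Choose an irreducible curve $C\subset X$ with $g := f|_C\cl C\ra B$ finite of degree $m$ --- a general codimension-$2$ linear section of $X$ in a projective embedding --- and let $\iota\cl C\hra X$. Chow-constancy gives $f_*\cl\CH_0(X)\xrightarrow{\sim}\CH_0(B)$; since $g_*g^* = m\cdot\id$, the inverse on $\QQ$-coefficients is $\tfrac1m\iota_*g^*$, and consequently any two points of a fibre of $f$ are rationally equivalent in $X$ and the self-correspondence $\tfrac1m[X\times_B C]$ acts as the identity on $\CH_0(X)\otimes\QQ$. The cycle $[X\times_B C]\in\CH_3(X\times_B X)$ is visibly supported over $C$ --- hence over any relatively very ample divisor $D$ containing $C$ --- in the second factor, which is what furnishes the $\gamma_1$-term; the difference $\Delta_X - \tfrac1m[X\times_B C]$ is the cycle on $X\times_B X$ that must be shown to spread out.

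The step I expect to be the main obstacle is precisely this passage from $X$ to its generic fibre. Chow-constancy lives in $\CH_0(X)$, and the push-forward $\CH_0(X_b)\ra\CH_0(X)$ forgets all degree-$0$ classes, so by itself it does not control $\CH_0$ of a neigh
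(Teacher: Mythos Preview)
Your proposal correctly identifies Bloch--Srinivas as the right tool and the multisection $C\subset X$ as the right input, but it gets stuck at exactly the point you flag: you are attempting to decompose the \emph{relative} diagonal of $S=X_\xi$, and for that you would need to know that the tautological $0$-cycle in $\CH_0(S_{\CC(X)})\otimes\QQ$ is supported on a divisor. Chow-constancy of $X\to B$ is a statement about $\CH_0(X)$ over $\CC$, and, as you say, the pushforward $\CH_0(X_b)\to\CH_0(X)$ kills degree-$0$ classes; it gives no direct control over $\CH_0$ of the fibres or of $S_{\CC(X)}$. So the obstacle you name is real, and your outline does not overcome it.

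The paper sidesteps this entirely by working with the \emph{absolute} diagonal $\Delta_X\subset X\times X$. Since $\pi$ is Chow constant and $B$ is a curve, $\CH_0(X)$ is supported on the multisection $C$. That is precisely the hypothesis of absolute Bloch--Srinivas, which yields
\[
\Delta_X \;=\; Z_1 + Z_2 \;\in\; \CH_3(X\times X)\otimes\QQ,
\]
with $Z_1$ supported on $C\times X$ and $Z_2$ on $X\times D$ for some divisor $D\subset X$. Now let this act on $\CH_1(X)\otimes\QQ$ rather than on $\CH_0$: for any irreducible curve $\alpha\subset X$ one has $[\alpha]={p_2}_*(p_1^*[\alpha]\cdot\Delta_X)$, and the $Z_1$-contribution vanishes because it factors through the intersection $[C]\cdot[\alpha]$, which is zero for dimension reasons (two curves in a threefold). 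Hence $\CH_1(X)\otimes\QQ$ is supported on $D$. Finally, pass to the generic fibre via localization and the colimit expression
\[
\CH_0(X_\xi)\otimes\QQ \;=\; \operatorname*{colim}\limits_{\emptyset\ne V\subset B}\;\CH_1(\pi^{-1}(V))\otimes\QQ,
\]
to conclude that $\CH_0(X_\xi)\otimes\QQ$ is supported on $D_\xi$.

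Thus the route through the relative diagonal, while natural, demands an input you cannot extract from $\CH_0(X)$; the paper's route through the absolute diagonal and $\CH_1(X)$ uses only what Chow-constancy actually provides, and the threefold hypothesis enters exactly through the vanishing $[C]\cdot[\alpha]=0$.
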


\noindent We give several examples of K3 surfaces $X_\xi$ over the function field $\xi$ of a complex curve such that $\CH_0(X_\xi)$ is finite dimensional.

We consider two other classes of fibrations, which are defined only by the properties of their fibers. Let
\[
f\cl X\ra Y
\]
be a fibration of smooth projective varieties. We say $f$ is a \textbf{Chow trivial fibration} if, for a general fiber $X_y$, $\CH_0(X_y)\cong \ZZ$. Likewise, we say that $f$ is a \textbf{cohomologically trivial fibration} if $H^{p,0}(X_y)=0$ for all $p>0$.  (We also define these fibrations when $f$ is a rational map.) They also give rise to maximal fibrations.

\begin{Lthm}\label{MaxTrivFibs}
Any smooth complex projective variety $X$ admits a maximal Chow trivial (resp. cohomologically trivial) fibration:
\[
\eta\cl X\dra Y.
\]
$Y$ is defined up to birational isomorphism, and satisfies the following universal property: if a fibration $\phi\cl X\dra Z$ is Chow trivial (resp. cohomologically trivial) then $\eta$ factors through $\phi$. As in Theorem~\ref{MaxConstFibs}, $\eta$ is almost holomorphic.
\end{Lthm}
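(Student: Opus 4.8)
The plan is to mirror the construction behind Theorem~\ref{MaxConstFibs}. Replace ``Chow constant'' (resp. ``cohomologically constant'') subvarieties by \emph{Chow trivial} (resp. \emph{cohomologically trivial}) subvarieties, meaning subvarieties $W\subset X$ with $\CH_0(W)=\ZZ$ (resp. with $H^{p,0}(W)=0$ for all $p>0$, where for singular $W$ this is read as $F^pH^p(W)=0$). Note that a fibration $\phi\cl X\dra Z$, resolved to a morphism, is Chow trivial (resp. cohomologically trivial) exactly when its general fiber is such a subvariety of $X$; so the ``atoms'' we want to contract are the general fibers of all Chow (resp. cohomologically) trivial fibrations. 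I would then build $\eta$ as the quotient of $X$ by the equivalence relation generated by the covering families of these atoms, using the standard quotient construction for proper covering families of subvarieties --- the same tool that produces the maximal rationally connected fibration, and which automatically produces an almost holomorphic rational map.

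The input specific to these classes is the relevant \emph{chain lemma}: a connected finite union $W=W_1\cup\dots\cup W_k$ of Chow trivial (resp. cohomologically trivial) subvarieties is again Chow trivial (resp. cohomologically trivial), and, more to the point, the chain-equivalence classes of the covering families are Chow trivial (resp. cohomologically trivial). For the Chow statement this is a direct chase: every closed point $q$ of such a class $F$ lies on a Chow trivial member $W_{t_0}$, hence $[q]$ equals the class of any chosen base point after walking along the chain joining them and using that consecutive members meet; so $\CH_0(F)$ is generated by a single point class and the degree map exhibits it as $\ZZ$. For the cohomological statement the finite-union case follows from the Mayer--Vietoris sequence of mixed Hodge structures: the kernel of $H^p(W_1\cup W_2)\to H^p(W_1)\oplus H^p(W_2)$ is a quotient of $H^{p-1}(W_1\cap W_2)$, and since $W_1\cap W_2$ is projective this has weights $\le p-1$, so it meets the $(p,0)$-part trivially and $H^{p,0}(W_1\cup W_2)\hookrightarrow H^{p,0}(W_1)\oplus H^{p,0}(W_2)=0$.

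Granting the chain lemma, the construction proceeds as for Theorem~\ref{MaxConstFibs}: the covering families of Chow trivial (resp. cohomologically trivial) subvarieties of $X$ are, after shrinking parameter spaces, indexed by the countably many components of $\Chow(X)$ whose general member is of that type and whose members dominate $X$; the equivalence relation they generate has generic fiber dimension that is nondecreasing and bounded by $\dim X$, hence is generated by finitely many of them, so the quotient exists as an almost holomorphic fibration $\eta\cl X\dra Y$ whose general fiber is (the closure of) a chain-equivalence class and is therefore Chow trivial (resp. cohomologically trivial); for the Chow case one also spreads out the decomposition of the relative diagonal to match the ``very general closed fiber'' form of the definition. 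For the universal property: if $\phi\cl X\dra Z$ is Chow trivial (resp. cohomologically trivial), resolve it to a morphism; its general fiber lies in one of the chosen covering families, hence inside a single fiber of $\eta$, so $\eta$ is constant along the general fibers of $\phi$ and factors as $\eta=g\circ\phi$ for a rational map $g\cl Z\dra Y$. Uniqueness of $Y$ up to birational isomorphism is then formal, since any two maximal such fibrations factor through one another.

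The Chow half of the chain lemma is elementary; the main obstacle is the cohomological half \emph{for the chain-equivalence classes}, because such a class $F$ is only \emph{covered by} --- not a finite union of --- cohomologically trivial subvarieties, so the Mayer--Vietoris argument does not apply directly. I would attack this either by a direct argument (a holomorphic $p$-form on $F$ restricting to zero on a chain-connected covering family of cohomologically trivial subvarieties must vanish, using that such a family gives a cohomologically trivial, hence cohomologically constant, fibration together with dominance of the evaluation map), or, since cohomological triviality --- unlike Chow triviality --- is invariant in smooth families and closed under dominant rational maps, by invoking Campana's machinery for $\mathcal{P}$-reductions. The remaining points --- that the equivalence relation is geometric and realized by an almost holomorphic fibration, and the spreading-out needed in the Chow case --- are shared with Theorem~\ref{MaxConstFibs} and the classical maximal rationally connected fibration, and would be handled by the same arguments.
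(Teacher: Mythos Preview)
Your approach differs substantially from the paper's. The paper does \emph{not} take a direct quotient by the equivalence relation generated by Chow (resp.\ cohomologically) trivial subvarieties, and so never needs a chain lemma. Instead it reduces Theorem~\ref{MaxTrivFibs} to Theorem~\ref{MaxConstFibs} by \emph{iterating relative constant fibrations}: starting from $\pi_0\cl X\to\Spec(\CC)$, one takes the maximal relative Chow (resp.\ cohomologically) constant fibration $\eta_0\cl X\dra Y_0$, resolves to get $\pi_1\cl X_1\to Y_0$, takes the maximal relative constant fibration $\eta_1\cl X_1\dra Y_1$ of $\pi_1$, and continues. Since $\dim Y_0\le\dim Y_1\le\cdots\le\dim X$, the tower stabilises at some $Y_\infty$; at that stage the maximal (absolute) Chow constant fibration of a very general fibre $(X_{n+1})_y$ is the map to a point, which is precisely the condition $\CH_0((X_{n+1})_y)\cong\ZZ$ (and analogously $h^{p,0}=0$ in the cohomological case, via the relative Voisin distribution). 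The universal property is then checked step by step, using only that a Chow (resp.\ cohomologically) trivial subvariety is Chow (resp.\ cohomologically) constant in every ambient variety. What this buys is that all the hard work --- existence of the maximal constant fibration and its relative form --- has already been done for Theorem~\ref{MaxConstFibs}, and nothing about chains is ever needed.

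Your Chow-trivial half can probably be salvaged: once $\eta$ is almost holomorphic the general fibre $F$ is smooth, and your walking-along-chains argument then gives $\CH_0(F)=\ZZ$. (Note, though, that the paper defines Chow triviality via a resolution, so smoothness of $F$ is essential; your finite-union statement is false as written, since the resolution of two Chow trivial varieties meeting in a point is disconnected and has $\CH_0\cong\ZZ^2$.) The cohomological half has the gap you yourself flag. Your argument (a) only produces an injection $H^{p,0}(F)\hookrightarrow H^{p,0}(\mathcal{W})\cong H^{p,0}(T)$ via $e^*$, and nothing you have written forces the image to vanish when $H^{p,0}(T)\ne 0$; chain-connectedness of the family in $F$ has not actually been used. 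Your argument (b) is a black-box invocation whose hypotheses you have not verified. The paper's iteration sidesteps this entirely: at no point does one need to know that a variety swept out by cohomologically trivial subvarieties is itself cohomologically trivial.
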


For a rational fibration $f\cl X\dra Y$ (see Def.~\ref{fibration}), we have the following chain of implications:
\[
\begin{tikzcd}[arrows=Rightarrow]
\left(\begin{array}{c}
f\text{ is a rationally}\\
\text{conn. fibration}
\end{array}\right)\arrow{r}& \left(\begin{array}{c}
f\text{ is a Chow}\\
\text{trivial fibration}
\end{array}\right)\arrow[r,shift left,"\text{Prop.~\ref{chowimpliescohom}}"]\arrow[d,"\text{Cor.~\ref{cor:chowtrivialimplieschowconstant}}"]&\left(\begin{array}{c}
f\text{ is a cohom.}\\
\text{trivial fibration}
\end{array}\right)\arrow[d,"\mathrm{Cor.~\ref{CohTrivConst}}"]\arrow[l,dashed, shift left,"\text{Conj.~\ref{Bloch}}"]\\
&\left(\begin{array}{c}
f\text{ is a Chow}\\
\text{constant fibration}
\end{array}\right)\arrow[r,shift left,"\text{Prop.~\ref{chowimpliescohom}}"]&\left(\begin{array}{c}
f\text{ is a cohom.}\\
\text{constant fibration}
\end{array}\right)\arrow[l,dashed,shift left,"\text{Conj.~\ref{GenBloch}}"].
\end{tikzcd}
\]

As another application, we note that the study of cohomologically trivial fibrations is relevant to the study of rational singularities. Let $X$ be a variety and $X^{\mathrm{rat}}\subset X$ the locus where $X$ has rational singularities. Koll\'ar has asked the following question: does there exist a partial resolution $\mu\cl X'\ra X$ of $X$ such that $X'$ has rational singularities and $\mu$ is an isomorphism on the preimage of $X^{\mathrm{rat}}$? Motivated by this question, we consider a refinement of the problem in the case of cones. When $X$ is smooth and projective and $L$ an ample line bundle on $X$, then the projective cone $C(X,L)$ has a canonical resolution
\[
\mu\cl \PP(\Oc\oplus L)\ra C(X,L)
\]
by blowing up the cone point. Say a birational model $\Rb$ of $C(X,L)$ is an \textbf{intermediate rationalization of singularities of $C(X,L)$} if $\Rb$ has rational singularities and $\mu$ factors as 
\[
\begin{tikzcd}
\PP(\Oc\oplus L)\arrow[r]\arrow[rr,bend right=40,"\mu"]\arrow[r]&\Rb\arrow[r]&C(X,L).
\end{tikzcd}
\]
We have the following characterization of intermediate rationalizations of singularities of $C(X,L)$ (generalizing the criterion for cones to have rational singularities in \cite[Prop. 3.13]{KollarSMM}).

\begin{Lthm}\label{intratsings}
If $L$ is sufficiently positive, there is a bijective correspondence
\[
\left\{ \begin{array}{l}
\text{intermediate rationalizations of }\\
\text{singularities of $C(X,L)$}
\end{array} \right\}\longleftrightarrow \left\{\begin{array}{l}
\text{regular cohom. trivial fibrations $f\cl X\ra Y$}\\
\text{such that $Y$ has rational singularities}
\end{array}  \right\}.
\]
\end{Lthm}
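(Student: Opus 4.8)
The plan is to show that both sides of the correspondence are controlled by the single vanishing $R^{>0}q_*\Oc_W = 0$, where $W = \PP(\Oc_X\oplus L)\xrightarrow{\mu}C(X,L)$ is the canonical resolution and, for an intermediate rationalization, $q$ is the first factor $W\ra\Rb$ of $\mu = r\circ q$. First I record the relevant geometry of $\mu$: with $\pi\cl W\ra X$, the section $E\subset W$ coming from the quotient $\Oc_X\oplus L\twoheadrightarrow\Oc_X$ has $\Oc_W(1)|_E = \Oc_X$ and $N_{E/W} = L^{-1}$; $\mu = \phi_{|\Oc_W(m)|}$ for $m\gg 0$ contracts $E$ to the vertex $v$ and is an isomorphism over $C(X,L)\setminus v$; and $W\setminus X_\infty$ is exactly the total space $\mathrm{Tot}(L^{-1})$ with zero section $E$, carrying the fibrewise $\mathbb{G}_m$-action inducing the cone action on $C(X,L)$ and on its affine cone $\Spec R$, $R = \bigoplus_{m\geq 0}H^0(X,L^m)$. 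Given an intermediate rationalization $W\xrightarrow{q}\Rb\xrightarrow{r}C(X,L)$, the map $q$ is an isomorphism over $\Rb\setminus r^{-1}(v)$ with $q^{-1}(r^{-1}(v)) = E$; since $\Rb$ is normal, $q_*\Oc_W = \Oc_\Rb$, and by formal functions the fibres of $q$ — in particular those of $f := q|_E\cl X = E\ra Y := r^{-1}(v)_{\mathrm{red}}$ — are connected, so $f$ is the regular fibration attached to $\Rb$. Conversely, to a regular fibration $f\cl X\ra Y$ one attaches $\Rb_f := \mathrm{Proj}_Y\bigl(\bigoplus_{m\geq 0}f_*\,\mathrm{Sym}^m(\Oc_X\oplus L)\bigr)$, equivalently the image of $W$ under the morphism given by the semiample line bundle $\Oc_W(m)\otimes g^*A$ ($g = f\circ\pi$, $A$ ample on $Y$); as the curves contracted by this bundle are exactly the curves of $E$ contracted by $f$, and these are among the curves contracted by $\Oc_W(m)$, one gets a factorization $W\xrightarrow{q}\Rb_f\xrightarrow{r}C(X,L)$ of $\mu$ with $q|_E = f$.

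Next comes the cohomological heart: \emph{$\Rb$ has rational singularities $\iff R^{>0}f_*L^{\otimes k} = 0$ for all $k\geq 0$.} Since $\Rb\setminus r^{-1}(\Spec R)\cong X$ is smooth, $\Rb$ has rational singularities iff its affine part $r^{-1}(\Spec R)$ is normal and $R^{>0}(q|_{\mathrm{Tot}(L^{-1})})_*\Oc_{\mathrm{Tot}(L^{-1})} = 0$. Now, exactly as in the proof of the criterion for cones to have rational singularities (generalizing \cite[Prop. 3.13]{KollarSMM}) but carried out relatively over $Y$: the sheaf $R^iq_*\Oc_W$ is supported on $Y$ and is computed from the infinitesimal neighbourhoods $E_n\subset W$, whose structure sheaves are filtered with graded pieces $\Oc_W(-jE)|_E\cong L^{\otimes j}$ ($0\leq j<n$); pushing forward along $\pi|_E = \id$ and then $f$, and using that the $\mathbb{G}_m$-action splits this filtration into its weight pieces, one obtains $R^iq_*\Oc_W = \bigoplus_{j\geq 0}R^if_*L^{\otimes j}$ as graded $\mathbb{G}_m$-sheaves. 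Hence $R^{>0}q_*\Oc_W = 0\iff R^{>0}f_*L^{\otimes j} = 0$ for all $j\geq 0$. When $L$ is sufficiently positive — which also makes $C(X,L)$ and $\Rb_f$ normal and Cohen--Macaulay and makes $L$ relatively positive enough for $f$ — relative Serre vanishing disposes of the conditions with $j\geq 1$, leaving precisely $R^{>0}f_*\Oc_X = 0$.

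The theorem is then reduced to the \emph{Key Lemma}: for a regular fibration $f\cl X\ra Y$ with $X$ smooth projective, $R^{>0}f_*\Oc_X = 0$ $\iff$ $f$ is cohomologically trivial and $Y$ has rational singularities. The direction ``$\Rightarrow$'' is easy: generic smoothness gives $H^{>0}(X_y,\Oc_{X_y}) = 0$ for general $y$, so $f$ is cohomologically trivial; and $f_*\Oc_X = \Oc_Y$, so $\Oc_Y\xrightarrow{\sim}Rf_*\Oc_X$ is an isomorphism (hence a split injection) in the derived category, and since $X$ has rational singularities (being smooth), Kov\'acs's criterion gives that $Y$ does too. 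The direction ``$\Leftarrow$'' is the hard one, and the main obstacle of the proof lies here. One resolves $\rho\cl\widetilde Y\ra Y$, takes a resolution $X'$ of $X\times_Y\widetilde Y$ with $X'\ra X$ a resolution of $X$ and $f'\cl X'\ra\widetilde Y$; then $Rf_*\Oc_X = R\rho_*Rf'_*\Oc_{X'}$, the general fibre of $f'$ equals that of $f$ hence is cohomologically trivial, the Stein factorization of $f'$ is finite and birational over the normal $\widetilde Y$ hence trivial, and one is left with the smooth-base statement ``$X',\widetilde Y$ smooth projective, $f'$ cohomologically trivial $\Rightarrow R^{>0}f'_*\Oc_{X'} = 0$'' (given which, $Rf_*\Oc_X = R\rho_*\Oc_{\widetilde Y} = \Oc_Y$, using that $Y$ has rational singularities). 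That smooth-base statement is the crux: by Koll\'ar's torsion-freeness of the sheaves $R^jf'_*\omega_{X'}$ together with the fibrewise cohomological triviality, $R^jf'_*\omega_{X'/\widetilde Y} = 0$ for $j$ below the relative dimension $e$, so relative duality identifies $Rf'_*\Oc_{X'}$ with $\mathrm{R}\mathcal{H}om_{\widetilde Y}(R^ef'_*\omega_{X'/\widetilde Y},\Oc_{\widetilde Y})$, and one must show the torsion-free rank-one sheaf $R^ef'_*\omega_{X'/\widetilde Y}$ is invertible — this uses smoothness of $X'$ essentially, via Koll\'ar's package for higher direct images of dualizing sheaves (equivalently the decomposition theorem), and requires care with non-reduced or excess-dimensional special fibres. (If this Key Lemma is proved earlier in the paper, it is simply invoked here.)

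Granting the Key Lemma, the correspondence closes up. If $\Rb$ is an intermediate rationalization, it has rational singularities, so $R^{>0}q_*\Oc_W = 0$, so $R^{>0}f_*\Oc_X = 0$, so by the Key Lemma the attached $f\cl X\ra Y$ is cohomologically trivial with $Y$ having rational singularities. Conversely, if $f\cl X\ra Y$ is a regular cohomologically trivial fibration with $Y$ having rational singularities, then $R^{>0}f_*\Oc_X = 0$ by the Key Lemma, so (as $L$ is sufficiently positive, whence $\Rb_f$ is normal and $R^{>0}f_*L^{\otimes j} = 0$ for $j\geq 1$) $\Rb_f$ has rational singularities and $\mu$ factors through it: $\Rb_f$ is an intermediate rationalization. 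These assignments are mutually inverse: a contraction of $W$ is determined by the curves it contracts, $q$ contracts precisely the curves of $E$ killed by $f = q|_E$, and $\Rb_f$ is by construction the contraction of that same face, so $\Rb\cong\Rb_f$ over $C(X,L)$; and the fibration attached to $\Rb_f$ is $q|_E = f$. This gives the asserted bijection. (At the outset one should specify what ``$L$ sufficiently positive'' provides — projective normality of the cones and the relative vanishings $R^{>0}f_*L^{\otimes j} = 0$, $j\geq 1$, for the relevant bases $Y$, which are all dominated by $X$.)
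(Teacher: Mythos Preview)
Your proof is correct and follows essentially the same route as the paper: both identify the rationality of $\Rb$ with the vanishing $R^{>0}f_*L^{\otimes j}=0$ for all $j\ge 0$ via the theorem on formal functions, then invoke Koll\'ar's theorem (your ``Key Lemma'', which is exactly \cite[Thm.~7.1]{KollarHigherDirect}, recalled as Theorem~\ref{thm:kollar}) to translate the $j=0$ case into the cohomological-triviality-plus-rational-singularities condition, with the positivity of $L$ killing the $j\ge 1$ terms. The only cosmetic differences are that the paper cites Koll\'ar rather than reproving the Key Lemma, and constructs $\Rb_f$ as the normalization of the image of $(\mu,f\circ\pi)\colon \PP(\Oc\oplus L)\to C(X,L)\times Y$ rather than as a relative $\mathrm{Proj}$ or semiample contraction.
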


\noindent One can remove the assumption about the positivity of $L$ by modifying the right hand side.

In \S \ref{sec:cohfibs} we prove some basic facts about cohomologically constant and cohomologically trivial fibrations. We give a criterion for a fibration to be cohomologically constant in terms of a natural distribution/foliation on $X$ (see Def.~\ref{def:VD}) which was suggested to us by Claire Voisin. In \S \ref{sec:chowfibs} we prove analogous facts about Chow constant and Chow trivial fibrations. We show that a fibration is Chow constant if the fibers are Chow constant subvarieties in the sense of Huybrechts (see Theorem~\ref{thm:chowconstantfibers}). We also recall some examples of Chow constant fibrations and prove Proposition~\ref{famsosurfaces}. In \S \ref{sec:ratcones} we prove Theorem~\ref{intratsings}. In \S \ref{sec:maxchowconst} and \S \ref{sec:maxcohomological} we prove Theorem~\ref{MaxConstFibs} and Theorem~\ref{MaxTrivFibs}.  In \S \ref{sec:maxchowconst} we recall the quotient of a variety by an algebraic equivalence relation (which we attribute to \Roi tman). In \S \ref{sec:maxcohomological} we prove that one can produce maximal quotients with fibers in an arbitrary foliation, as suggested to us by Claire Voisin. Lastly, in Appendix \ref{appendix} we prove an elementary result: when $\CH_0(X)$ of a variety $X$ over an arbitrary field $k$ is supported on a curve, then it is finite dimensional in the sense of Mumford.

Unless explicitly stated, we work over $\CC$. All our varieties are by assumption irreducible. By a regular fibration we mean a fibration which is everywhere defined. By abuse of notation if $k\subset \xi$ is a field extension and $X$ is a variety over $k$ then we use $X_\xi$ to denote the base change $X_\xi:=X\times_{\Spec(k)}\Spec(\xi)$.

We would like to thank Ed Dewey, Lawrence Ein, Laure Flapan, Charles Godfrey, Elham Izadi, Robert Lazarsfeld, Stefan Kebekus, Daniel Litt, James M\textsuperscript{c}Kernan, Mircea Musta\c{t}\u{a}, John Ottem, Alex Perry, Ari Shnidman, Fumiaki Suzuki, Burt Totaro, and Claire Voisin for interesting discussions and helpful comments.

\section{Cohomologically Constant and Trivial Fibrations}\label{sec:cohfibs}

In this section, we define cohomologically constant and cohomologically trivial fibrations. We are grateful to Claire Voisin who suggested we define a natural integrable distribution on a variety $\Vc_X$ which controls when a fibration is cohomologically constant. The existence of this distribution is what allows us in \S \ref{sec:maxcohomological} to define the maximal cohomologically constant and cohomologically trivial fibrations.

Let $X$ and $Y$ be projective varieties. Let $f\cl X\dra Y$ be a rational map.

\begin{definition}\label{fibration}
We say $f$ is a \textbf{fibration} if $f$ is dominant and the closure of a general fiber of $f$ is irreducible.
\end{definition}

Recall the following fact about global $p$-forms. 

\begin{lemma}[{\cite[Lem. 2.2]{VoisinSurvey}}]\label{lem:pformsbirational}
For any $p \ge 0$, the group $\hpo(X)$ is a birational invariant among smooth projective varieties.
\end{lemma}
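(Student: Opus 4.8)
The plan is to reduce the statement to the case of a birational morphism (rather than a general birational map) and then to establish the invariance under such a morphism. First I would recall that by Hironaka's resolution of singularities, any birational map $\phi\cl X\dra X'$ between smooth projective varieties can be dominated by a smooth projective variety $W$ with birational \emph{morphisms} $g\cl W\ra X$ and $g'\cl W\ra X'$ (take a resolution of the closure of the graph of $\phi$). So it suffices to prove: if $g\cl W\ra X$ is a birational morphism of smooth projective varieties, then $g^*\cl\hpo(X)\ra\hpo(W)$ is an isomorphism; the general case then follows by applying this to $g$ and to $g'$ and composing.

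The key step is thus the birational-morphism case. Injectivity of $g^*$ is automatic because $g$ is dominant (a $p$-form pulling back to zero on a dense open set is zero). For surjectivity, the main point is that a holomorphic $p$-form $\omega$ on $W$ descends to $X$: since $g$ is an isomorphism over a dense open $U\subset X$ whose complement has codimension $\ge 2$ (here one uses that $X$ is smooth, so in particular normal, and that the exceptional locus of a birational morphism to a smooth variety has pure codimension $1$ in $W$ but maps into a set of codimension $\ge 2$ in $X$ — more carefully, one just needs that $X\setminus g(\text{Exc}(g))$ has codimension $\ge 2$), the form $(g|_U)^{-1,*}\omega$ is a holomorphic $p$-form on $U$, and by Hartogs / the fact that reflexive sheaves on a smooth variety that agree in codimension $1$ agree everywhere (equivalently, $\Omega^p_X$ is locally free hence normal), it extends to a holomorphic $p$-form $\tilde\omega$ on all of $X$. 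Then $g^*\tilde\omega$ and $\omega$ agree on $g^{-1}(U)$, which is dense in $W$, hence $g^*\tilde\omega=\omega$, proving surjectivity.

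The main obstacle — really the only subtle point — is the extension step: justifying that a holomorphic $p$-form defined on the complement of a codimension-$\ge 2$ set in the smooth variety $X$ extends across that set. This is where smoothness (or at least normality together with local freeness of $\Omega^p$) is essential, and it is the classical statement that $j_*\Omega^p_U = \Omega^p_X$ for $j\cl U\hookrightarrow X$ the inclusion of the complement of a codimension-$\ge 2$ closed subset; it follows from the algebraic Hartogs lemma applied in local coordinates to the coefficient functions of the form. Everything else is formal. I would remark that this is exactly the content of \cite[Lem.~2.2]{VoisinSurvey}, which we may simply cite, but the argument above is the standard one.
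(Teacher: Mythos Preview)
Your argument is correct and is the standard one. Note, however, that the paper does not actually prove this lemma: it simply states the result and cites \cite[Lem.~2.2]{VoisinSurvey}, so there is no ``paper's own proof'' to compare against beyond that reference. What you have written is essentially the argument one finds in Voisin's survey (reduce via a common resolution to the case of a birational morphism, then use that $\Omega^p_X$ is locally free on a smooth variety and hence sections extend across codimension $\ge 2$), so your proposal is entirely in line with the cited source.
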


Thus for any rational map $f$ as above we can define a pull-back on $p$-forms by first resolving the rational map $f$
\[
\begin{tikzcd}
&\Xbar\arrow[dl]\arrow[dr,"\fbar"]&\\
X\arrow[rr,dashed,swap,"f"]&&Y
\end{tikzcd}
\]
and defining $f^*$ to be the composition:
\[
f^*\cl \hpo(Y)\xrightarrow{\fbar^*}\hpo(\Xbar)\cong \hpo(X).
\]

\begin{definition}
We say a fibration $f \cl X \dra Y$ between smooth projective varieties is a \textbf{cohomologically constant fibration} if $f^*\cl \hpo(Y)\ra \hpo(X)$ is an isomorphism for all $p$.
\end{definition}

\begin{example}\label{ex:pencilofK3s}
As pullback on $p$-forms is injective, a simple class of examples of cohomologically constant fibrations are those where the domain satisfies $\hpo(X)= 0$ for all $p > 0$. For instance, if $f\cl \PP^3 \dra \PP^1$ is a pencil of quartics, then $f$ is a cohomologically constant fibration.
\end{example}

\begin{remark}
If $X$ is smooth, projective of dimension $n$, and $H^{n,0}(X)\ne 0$, then every cohomologically constant fibration is birational.
\end{remark}

The property of being a cohomologically constant fibration is controlled by a natural distribution on $X$.

\begin{definition}\label{def:VD}
Let $\Vc_X\subset T_X$ be the subsheaf of $T_X$ defined as follows:
\[
\Vc_X(U):=\left\{v\in T_X(U)\middle|
\begin{array}{l}
\forall p>0\text{, }\forall\omega\in \hpo(X)\text{, the}\\
\text{ contraction }\omega\lrcorner (v|_U)= 0\in \Omega_X^{p-1}(U)
\end{array}
\right\}.
\]
We call $\Vc_X$ \textbf{Voisin's distribution}. It is straightforward to show that $\Vc_X$ is integrable (e.g. by applying the invariant formula for the exterior derivative and using that for any form $\omega \in \hpo(X)$, we have $d\omega =0$). Thus $\Vc_X$ generically defines a foliation on $X$, which we call \textbf{Voisin's foliation}.
\end{definition}

\begin{remark}\label{rem:voisindistaskernel}
For each $p>0$ there is a contraction map
\[
\text{cont}_p\cl T_X \rightarrow  \hpo(X)^*\otimes_\CC\Omega_X^{p-1}.
\]
We could equivalently define $\Vc_X:=\cap_{p>0} \ker(\mathrm{cont}_p).$
\end{remark}

\begin{remark}\label{rmk:relvoisindist}
If $f \cl X \ra S$ is a regular fibration of smooth projective varieties with relative dimension $r$, we can also define a relative version of Voisin's distribution (resp. Voisin's foliation) $\Vc_f\subset T_X$. Let $U\subset S$ be the open set where $f$ is smooth and $X_U:=f^{-1}(U)$. Consider the kernel of the relative contraction map:
\[
T_{X_U/U}\ra \bigoplus\limits_{p=1}^{r} f^*\left(f_*(\wedge^p\Omega_{X_U/U})\right)^*\otimes \wedge^{p-1}\Omega_{X_U/U}.
\]
As a subsheaf of $T_{X_U}$, the kernel can be extended to some subsheaf of $T_X$ and $\Vc_f$ is defined as the saturation of any such extension. Then $\Vc_f$ is an integrable distribution and for a general fiber $X_s$ of $f$, the restriction to $X_s$ is Voisin's distribution on the fiber, i.e. $\Vc_f \vert_{X_s} = \Vc_{X_s}$. 
\end{remark}

Consider the following diagram of smooth projective varieties
\[
\begin{tikzcd}
Z\arrow[r,"\psi"]\arrow[d,"\pi"]&X,\\
Y&
\end{tikzcd}
\]
such that both $\pi$ and $\psi$ are surjective and a general fiber of $\pi$ is irreducible. Voisin's distribution is useful for determining when $p$-forms on $X$ descend to $Y$.

\begin{proposition}\label{descendingForms}
The following are equivalent:
\begin{enumerate}
\item Global $p$-forms on $X$ descend to $Y$; i.e. for each $p>0$ and every $p$-form $\omega\in H^{p,0}(X)$ there is a form $\eta\in H^{p,0}(Y)$ such that $\psi^*(\omega)=\pi^*(\eta).$
\item The fibers of the family $Z \to Y$ map into Voisin's foliation; i.e. there is a nonempty open set $U\subset Z$ and a factorization:
\[
\begin{tikzcd}
T_{Z/Y}|_{U}\arrow[rr]\arrow[dr,dashed,"\exists"]&&\psi^*(T_X)|_U.\\
&\psi^*(\Vc_X)|_U\arrow[ur]&
\end{tikzcd}
\]
\end{enumerate}
\end{proposition}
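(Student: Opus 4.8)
The plan is to reduce both implications to a single local statement over a well-chosen nonempty open set. After shrinking, choose an open $V\subset Y$ over which $\pi$ is smooth with irreducible (hence connected) fibers, put $U:=\pi^{-1}(V)$, and shrink further so that $\psi|_U$ is smooth (so that $\psi^*\Omega_X^{p-1}|_U\hookrightarrow\Omega_Z^{p-1}|_U$ for all $p$) and so that, using Remark~\ref{rem:voisindistaskernel}, $\psi^*\Vc_X|_U=\bigcap_{p>0}\ker\big(\psi^*T_X|_U\xrightarrow{\psi^*\mathrm{cont}_p}\hpo(X)^*\otimes\psi^*\Omega_X^{p-1}|_U\big)$ (this needs $\Vc_X$ and the cokernel of each $\mathrm{cont}_p$ to be subbundles over the relevant locus, which holds after shrinking). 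The workhorse is the pointwise identity: for a local vector field $v$ on $Z$ and $\omega\in\hpo(X)$, the contraction $\psi^*\omega\lrcorner v\in\Omega_Z^{p-1}(U)$ is the image of $\psi^*\omega\lrcorner\, d\psi(v)\in\psi^*\Omega_X^{p-1}|_U$ under the inclusion above, so over $U$ it vanishes if and only if $\psi^*\omega\lrcorner\, d\psi(v)=0$ as a section of $\psi^*\Omega_X^{p-1}$; likewise with $\pi$ in place of $\psi$.

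For $(1)\Rightarrow(2)$: given $\omega\in\hpo(X)$, write $\psi^*\omega=\pi^*\eta$ with $\eta\in\hpo(Y)$. For any $\pi$-vertical $v\in T_{Z/Y}(U)$ one has $\psi^*\omega\lrcorner v=\pi^*(\eta\lrcorner\, d\pi(v))=0$ because $d\pi(v)=0$, hence $\psi^*\omega\lrcorner\, d\psi(v)=0$ in $\psi^*\Omega_X^{p-1}|_U$ for every $p>0$ and every such $\omega$. By the description of $\psi^*\Vc_X|_U$ as the intersection of these kernels, $d\psi(v)\in\psi^*\Vc_X|_U$; so the composite $T_{Z/Y}|_U\hookrightarrow T_Z|_U\xrightarrow{d\psi}\psi^*T_X|_U$ factors through $\psi^*\Vc_X|_U$, which is $(2)$.

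For $(2)\Rightarrow(1)$: fix $\omega\in\hpo(X)$ and a $\pi$-vertical $v$ over $U$. The factorization gives $d\psi(v)\in\psi^*\Vc_X|_U$, so $\psi^*\omega\lrcorner\, d\psi(v)=0$ by the definition of $\Vc_X$, hence $\psi^*\omega\lrcorner v=0$ over $U$. As $\psi^*\omega$ is a holomorphic $p$-form on the smooth projective variety $Z$, it is $d$-closed, so Cartan's formula $\mathcal{L}_v\alpha=d(\alpha\lrcorner v)+(d\alpha)\lrcorner v$ yields $\mathcal{L}_v(\psi^*\omega)=0$ as well. Thus $\psi^*\omega|_U$ is annihilated by contraction with, and by the Lie derivative along, every $\pi$-vertical vector field over $U$; by the standard criterion for a form to be a pullback along a submersion with connected fibers, $\psi^*\omega|_U=\pi^*\eta_0$ for a holomorphic $p$-form $\eta_0$ on $V$ (unique, since $\pi^*$ is injective on forms) — i.e. for a rational $p$-form $\eta_0$ on $Y$ that is regular on $V$.

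The remaining point — promoting $\eta_0$ to an element of $\hpo(Y)=H^0(Y,\Omega_Y^p)$ — is the only genuinely technical step, and I expect it to be the main obstacle. I would show $\eta_0$ has no poles by a standard order-of-pole computation: if $\eta_0$ had a pole of order $k\geq1$ along a prime divisor $D\subset Y$, then, as $\pi$ is surjective, $\pi^{-1}(D)\to D$ is surjective, so some component $D'$ of $\pi^{-1}(D)$ dominates $D$; writing $\pi^*t=(\text{unit})\cdot s^{e}$ near the generic point of $D'$, with $t,s$ local equations of $D,D'$ and $e\geq1$ the ramification index, one computes that $\pi^*\eta_0$ has a pole of order $e(k-1)+1\geq1$ along $D'$, contradicting that $\pi^*\eta_0=\psi^*\omega$ is regular on $Z$. (The one subtlety is that the leading polar part of $\eta_0$ along $D$ must not pull back to zero on $D'$; this holds because $D'\to D$ is dominant.) Hence $\eta_0\in\hpo(Y)$, and since $\pi^*\eta_0$ and $\psi^*\omega$ are regular forms on $Z$ agreeing on the dense open $U$, they agree on all of $Z$, proving $(1)$. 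Beyond this pole analysis, the only other care needed is the routine shrinking of the first paragraph that makes the pullback identities and the exact sequence behave over $U$; the conceptual content is simply to contract against $\pi$-vertical vectors and then use $d$-closedness of holomorphic forms together with Cartan's formula to pass from ``$\psi^*\omega$ kills vertical directions'' to ``$\psi^*\omega$ descends from $Y$.''
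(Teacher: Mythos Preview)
Your argument is correct, and for $(1)\Rightarrow(2)$ it is essentially the paper's. For $(2)\Rightarrow(1)$ you take a genuinely different route in two places.

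\emph{Local descent.} You invoke $d$-closedness and Cartan's formula to get $\mathcal{L}_v(\psi^*\omega)=0$, then the basic-form criterion for submersions. The paper instead observes that the vanishing of the $dx_j$-terms means $\psi^*\omega|_{Z_W}\in H^0(Z_W,\pi^*\Omega^p_W)$, and then descends via $\pi_*\Oc_{Z_W}=\Oc_W$ (properness with connected fibers). Both work; yours is the differential-geometric version and would go through even for non-proper submersions, while the paper's is the sheaf-theoretic version and avoids any appeal to $d$-closedness.

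\emph{Extension to a global form.} Here the paper's method is much slicker than your pole analysis, and it dissolves what you flagged as ``the main obstacle.'' Pick a multisection $Y'\subset Z$, set $\pi'=\pi|_{Y'}$, and put
\[
\eta \;=\; \frac{1}{\deg(\pi')}\,\mathrm{tr}_{\pi'}\bigl(\psi^*\omega|_{Y'}\bigr).
\]
This is visibly a \emph{regular} $p$-form on $Y$ (trace of a regular form), and it agrees with your $\eta_0$ as meromorphic forms on the open set where $\pi$ is smooth; hence $\eta_0\in H^{p,0}(Y)$ with no pole computation at all.

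One small inaccuracy in your pole argument: the formula $e(k-1)+1$ is only the answer when the leading polar part of $\eta_0$ carries a $dt$; for a leading part of the shape $t^{-k}\alpha$ with $\alpha$ built from the $dy_i$, the pullback has a pole of order $ek$. In general the order is at least $e(k-1)+1$, so your conclusion (some pole of order $\ge 1$) survives, but the stated equality is not correct as written.
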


\begin{remark}\label{rem:fibersleaves}
This implies that if $f\cl X\dra Y$ is a fibration, then $f$ is cohomologically constant if and only if a general fiber of $f$ is generically contained in a leaf of Voisin's foliation.
\end{remark}

\begin{proof}[Proof of Proposition]
$(1) \implies (2)$: This direction is straightforward. Let $U\subset Z$ be the nonempty open set where $\pi$ and $\psi$ are both smooth. If $v \in T_{Z/Y}(U)$, then
\[
\psi^*(\omega)|_U\lrcorner v=\pi^*(\eta)|_U\lrcorner v=0\in (\wedge^{p-1}\Omega_Z)_z.
\]
It follows that locally  $\psi^*(\omega)|_U \lrcorner v = 0$ for every global $p$-form $\omega$ and thus $d\psi_* v\in \psi^*(\Vc_X)$.

$(2) \implies (1)$: 
Let $z\in U\subset Z$ be a general point. Then $\pi$ is smooth in a neighborhood of $\pi(z)$. There are coordinates
\[
x_1,\dots,x_r,y_1,\dots,y_s\in \Oc_{Z,z}
\]
in the local ring at $z$ such that the $\{y_i\}$ cuts out the fiber of $\pi$ at $z$ and the $\{x_j\}$ gives coordinates on the fiber. Likewise there is a basis for $(\Omega^p_Z)_z$ locally at $z$ given by $p$-wedges of $dx_i$s and $dy_j$s. The assumption in (2) implies that for any dual basis vector $v_j=\frac{\partial}{\partial x_j}\in (T_{Z/B})_z$ we have
\[
\psi^*(\omega)_z \lrcorner v_j = \psi^*(\omega)_z\lrcorner d\psi_*(v_j) = 0.
\]
Thus in the local coordinates:
\[
\psi^*(\omega)=f_1dy_1\wedge\dots +\dots
\]
and all the terms with $dx_j$s vanish.

Let $W\subset \pi(U)\subset Y$ be a nonempty open set over which $\pi$ is smooth and let $Z_W=\pi^{-1}(W)$. It follows that
\[
\psi^*(\omega)|_{Z_W} \in H^0(Z_W,\pi^*(\Omega^p_W)|_{Z_W}),
\]
and thus $\psi^*(\omega)$ descends to a meromorphic $p$-form $\eta$ on $B$. Showing it extends to a global $p$-form is straightforward. Let $Y'\subset Z$ be a multisection of $\pi$ and let $\pi'=\pi|_{Y'}$. Then we have
\[
\eta=\frac{1}{\deg(\pi')}\text{tr}_{\pi'}(\psi^*(\omega)|_{Y'})
\]
as meromorphic forms on $Y$. But the form on the right is a regular $p$-form, so we are done.
\end{proof}

\begin{definition}
Let $f\cl X\dra Y$ be a fibration, let $V$ be the closure of a general fiber, and let $\Vbar$ be a resolution of singularities of $V$. We say $f$ is a \textbf{cohomologically trivial fibration} if $\hpo(\Vbar)=0$ for all $p>0$.
\end{definition}

\begin{example}
Let $X$ be smooth and projective. Then $f\cl X\ra \Spec(\CC)$ is a cohomologically constant fibration $\iff f$ is a cohomologically trivial fibration$\iff h^{p,0}(X)=0$ $\forall p>0$.
\end{example}

\begin{example}
Continuing with Example \ref{ex:pencilofK3s}, we see that not all cohomologically constant fibrations are cohomologically trivial. If smooth, the closure of a fiber of $f: \PP^3\dra \PP^1$ is a quartic K3 surface $\Vbar\subset \PP^3$, thus $H^{2,0}(\Vbar) \ne 0$ for the general fiber $\Vbar$. 
\end{example}

To relate cohomologically constant and trivial fibrations, we recall a theorem of Koll\'ar:

\begin{theorem}[{\cite[Thm. 7.1]{KollarHigherDirect}}]\label{thm:kollar}
Let $\pi\cl X\ra Z$ be a surjective map between projective varieties, $X$ smooth, $Z$ normal. Let $F$ be the geometric generic fiber of $\pi$ and assume that $F$ is connected. The following two statements are equivalent:
\begin{enumerate}
\item $R^p\pi_*\Oc_X=0$ for all $p>0$;
\item $Z$ has rational singularities and $h^p(F,\Oc_F)=0$ for all $p>0$.
\end{enumerate}
\end{theorem}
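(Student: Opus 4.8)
The plan is to reduce both directions to properties of a resolution of $Z$ (and of $X$), and then run them through Grothendieck--Serre duality together with Koll\'ar's torsion-freeness, vanishing and decomposition theorems for higher direct images of dualizing sheaves. The part of each implication that only involves the fiber is formal: in characteristic zero generic smoothness makes $\pi$ smooth, hence flat, over a dense open $Z^{\circ}\subset Z$, so flat base change along $\Spec k(\eta)\ra Z$ (with $\eta$ the generic point of $Z$) identifies the stalk $(R^{p}\pi_{*}\Oc_{X})_{\eta}$ with $H^{p}(X_{\eta},\Oc_{X_{\eta}})$, and a further flat base change to $\overline{k(\eta)}$ identifies it with $H^{p}(F,\Oc_{F})$. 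Thus $(1)$ forces $h^{p}(F,\Oc_{F})=0$ for all $p>0$, while the fiber clause in $(2)$ forces $R^{p}\pi_{*}\Oc_{X}$ to vanish over a dense open subset of $Z$; in both directions the substance is concentrated over the singular locus of $Z$. I also record that, $F$ being connected and $Z$ normal, Stein factorization gives $\pi_{*}\Oc_{X}=\Oc_{Z}$ with no further hypotheses.

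For $(1)\Rightarrow(2)$ it then remains to show $Z$ has rational singularities. Combining $(1)$ with $\pi_{*}\Oc_{X}=\Oc_{Z}$, the canonical map $\Oc_{Z}\ra\Rb\pi_{*}\Oc_{X}$ is an isomorphism in $D^{b}(Z)$; since $X$ is smooth it has rational singularities, and an isomorphism has an obvious left inverse, so I would invoke Kov\'acs' splitting criterion --- if $Y$ has rational singularities and $\Oc_{Z}\ra\Rb f_{*}\Oc_{Y}$ admits a left inverse in $D^{b}(Z)$, then $Z$ has rational singularities --- to conclude.

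For $(2)\Rightarrow(1)$ I would fix a resolution $g\cl W\ra Z$ and a resolution $h\cl X'\ra X$ dominating it (so $X'$ resolves the main component of $X\times_{Z}W$) with $\pi'\cl X'\ra W$ satisfying $g\circ\pi'=\pi\circ h$. As $X,X'$ are smooth, $\Rb h_{*}\Oc_{X'}=\Oc_{X}$, hence $\Rb\pi_{*}\Oc_{X}=\Rb g_{*}\Rb\pi'_{*}\Oc_{X'}$; rational singularities of $Z$ give $\Rb g_{*}\Oc_{W}=\Oc_{Z}$, so it suffices to prove $\Rb\pi'_{*}\Oc_{X'}=\Oc_{W}$. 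Here Koll\'ar's machinery for the map $\pi'$ of smooth projective varieties enters: the generic fiber of $\pi'$ is a smooth model of $F$, so by Serre duality on it and the first paragraph $R^{q}\pi'_{*}\omega_{X'}$ vanishes over the generic point for $q<r:=\dim F$; torsion-freeness promotes this to $R^{q}\pi'_{*}\omega_{X'}=0$ for all $q<r$, and the decomposition theorem then gives $\Rb\pi'_{*}\omega_{X'}\cong R^{r}\pi'_{*}\omega_{X'}[-r]$. Feeding this through relative Grothendieck duality for $\pi'$ yields $\Rb\pi'_{*}\Oc_{X'}\cong\Rb\mathcal{H}om_{W}(R^{r}\pi'_{*}\omega_{X'},\omega_{W})$; one knows its $\mathcal{H}^{0}$ is $\pi'_{*}\Oc_{X'}=\Oc_{W}$ and that $R^{r}\pi'_{*}\omega_{X'}$ coincides with $\omega_{W}$ over the smooth locus of $\pi'$, and the remaining task is to show that the sheaves $R^{>0}\pi'_{*}\Oc_{X'}=\mathcal{E}xt^{>0}_{W}(R^{r}\pi'_{*}\omega_{X'},\omega_{W})$, a priori supported only over the degeneration locus of $\pi'$, actually vanish --- equivalently, that $R^{r}\pi'_{*}\omega_{X'}\cong\omega_{W}$.

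The hard part is exactly this last step. The degeneration locus of $\pi'$ can be a divisor in $W$, not just a small subvariety, and a naive Leray spectral sequence --- for $g\circ\pi'$ or for $\pi'$ alone --- only shows the relevant higher direct images are supported in positive codimension, not that they vanish. So the argument genuinely needs Koll\'ar's vanishing and decomposition theorems (or an equivalent combination of Grauert--Riemenschneider vanishing with relative duality), rather than formal homological algebra; this is also where one really uses that $X$ is smooth and $Z$ normal. A secondary, purely bookkeeping, difficulty will be to check that the various canonical maps ($\Oc_{Z}\ra\Rb\pi_{*}\Oc_{X}$, $\Oc_{W}\ra\Rb\pi'_{*}\Oc_{X'}$, the base-change morphisms) are compatible, so that the splitting and direct-summand claims hold on the nose.
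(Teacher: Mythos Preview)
The paper does not prove this theorem; it is quoted verbatim from Koll\'ar's paper \cite[Thm.~7.1]{KollarHigherDirect} and used as a black box (see the sentence immediately following the statement and Corollary~\ref{CohTrivConst}). So there is no ``paper's own proof'' to compare against.

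That said, your sketch follows the standard line of Koll\'ar's original argument, with one anachronism and one genuinely incomplete step. The anachronism is invoking Kov\'acs' splitting criterion for $(1)\Rightarrow(2)$: this is perfectly valid, but Kov\'acs' theorem postdates Koll\'ar's paper, and in fact Koll\'ar proves the rational-singularities conclusion directly from Grauert--Riemenschneider and duality rather than via a splitting criterion. The incomplete step is exactly the one you flag: showing $R^{r}\pi'_{*}\omega_{X'}\cong\omega_{W}$ for a surjective map of smooth projective varieties. Your duality computation reduces $(2)\Rightarrow(1)$ to this, but then you stop; as written the argument is circular, since this isomorphism is essentially the content of what you are trying to prove. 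In Koll\'ar's paper this is a separate result (his Proposition~7.6), proved via the trace map and torsion-freeness, and you would need either to cite it or to supply that argument to close the loop. Once that is in hand, your outline is correct.
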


The following corollary is a straightforward application of Koll\'ar's theorem using that $H^p(F,\Oc_F) \cong \hpo(F) =0$ for all $p>0$. 

\begin{corollary}\label{CohTrivConst}
If $f\cl X\dra Y$ is a cohomologically trivial fibration of smooth projective varieties, then $f$ is cohomologically constant.
\end{corollary}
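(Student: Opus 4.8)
The plan is to reduce the statement to a regular fibration and then combine Koll\'ar's theorem with the Leray spectral sequence. First I would eliminate the indeterminacy of $f$, producing a surjective morphism $\fbar\cl \Xbar\to Y$ of smooth projective varieties together with a birational morphism $\Xbar\to X$. By Lemma~\ref{lem:pformsbirational} this does not change $\hpo(X)$, and by definition $f^*$ agrees with $\fbar^*$ under the identification $\hpo(X)\cong\hpo(\Xbar)$; so it suffices to prove that $\fbar^*\cl \hpo(Y)\to\hpo(\Xbar)$ is an isomorphism. As noted in Example~\ref{ex:pencilofK3s}, pullback on $p$-forms is injective, so the only issue is surjectivity, which I will deduce from the dimension count $h^{p,0}(\Xbar)=h^{p,0}(Y)$ for all $p$.

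Next I would identify the cohomology of the geometric generic fiber. By generic smoothness a general fiber $F$ of $\fbar$ is smooth, and since $\Xbar\to X$ is birational, $F$ is birational to the resolution $\Vbar$ of the closure of a general fiber of $f$; hence by Lemma~\ref{lem:pformsbirational} and the hypothesis that $f$ is cohomologically trivial, $h^{p,0}(F)=h^{p,0}(\Vbar)=0$ for all $p>0$. Over $\CC$, Hodge symmetry gives $H^p(F,\Oc_F)\cong \hpo(F)$, so $H^p(F,\Oc_F)=0$ for $p>0$, and by semicontinuity together with cohomology-and-base-change the geometric generic fiber of $\fbar$ likewise has vanishing higher cohomology of its structure sheaf; it is connected because $f$ is a fibration.

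Now Theorem~\ref{thm:kollar} applies to $\fbar\cl\Xbar\to Y$ with $Y$ smooth (hence normal, with rational singularities): statement (2) holds, so statement (1) gives $R^q\fbar_*\Oc_{\Xbar}=0$ for all $q>0$. The Leray spectral sequence $H^p(Y,R^q\fbar_*\Oc_{\Xbar})\Rightarrow H^{p+q}(\Xbar,\Oc_{\Xbar})$ then degenerates, and since $\fbar$ has connected fibers and $Y$ is normal we have $\fbar_*\Oc_{\Xbar}=\Oc_Y$; therefore $H^p(\Xbar,\Oc_{\Xbar})\cong H^p(Y,\Oc_Y)$ for all $p$. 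Conjugating, $h^{p,0}(\Xbar)=h^{0,p}(\Xbar)=h^{0,p}(Y)=h^{p,0}(Y)$, so the injection $\fbar^*$ is an isomorphism and $f$ is cohomologically constant. The one step needing genuine care is matching ``a general fiber of $f$'' with ``the geometric generic fiber of $\fbar$'': this is where generic smoothness, birational invariance of $p$-forms, and cohomology-and-base-change are all used to transport the vanishing hypothesis to the generic point of $Y$.
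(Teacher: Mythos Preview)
Your proof is correct and follows the same approach the paper intends: the paper's one-line proof says only that the corollary is ``a straightforward application of Koll\'ar's theorem using that $H^p(F,\Oc_F)\cong\hpo(F)=0$,'' and you have simply unpacked this---resolving $f$, invoking Hodge symmetry on the fiber, applying Theorem~\ref{thm:kollar} to get $R^q\fbar_*\Oc_{\Xbar}=0$, and then reading off $h^{p,0}(\Xbar)=h^{p,0}(Y)$ from Leray. The extra care you take in matching a general closed fiber with the geometric generic fiber is appropriate and does not diverge from the paper's method.
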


Moreover we have:

\begin{corollary}\label{cor:compositionofcohfib}
Let $f\cl X \dra Y$ and $g\cl Y \dra Z$ be fibrations.  If $f$ and $g$ are cohomologically constant (resp. trivial) fibrations, then $g\circ f$ is a cohomologically constant (resp. trivial) fibration.
\end{corollary}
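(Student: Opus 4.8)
The plan is to treat the ``constant'' case by functoriality of pullback on holomorphic forms, and the ``trivial'' case by reducing it to the constant case through Koll\'ar's theorem (Corollary~\ref{CohTrivConst}). First one should check that $g\circ f$ is a fibration at all: it is plainly dominant, and for a dominant rational map $h\cl A\dra B$ of varieties the requirement that the closure of a general fiber be irreducible is equivalent to $\CC(B)$ being algebraically closed in $\CC(A)$ (the generic fiber of a resolution of $h$ is a geometrically irreducible $\CC(B)$-variety exactly in that case, and one spreads out to general fibers). Since relative algebraic closedness is transitive, from $\CC(Z)\subset\CC(Y)\subset\CC(X)$ with $\CC(Z)$ closed in $\CC(Y)$ and $\CC(Y)$ closed in $\CC(X)$ one gets $\CC(Z)$ closed in $\CC(X)$, so $g\circ f$ is a fibration.

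For the cohomologically constant case, the point is that pullback on $\hpo$ is functorial for dominant rational maps of smooth projective varieties, since $\hpo$ is a birational invariant compatibly with morphisms (Lemma~\ref{lem:pformsbirational}). Concretely, fix a resolution $\Ybar\to Y$ of $g$ together with the induced $\Ybar\to Z$, and resolve the rational map $X\dra\Ybar$ (that is, $f$ followed by the inverse of $\Ybar\to Y$) by some smooth projective $\Xbar$ with maps $\Xbar\to X$ and $\Xbar\to\Ybar$; then $\Xbar\to\Ybar\to Z$ resolves $g\circ f$ while $\Xbar\to\Ybar\to Y$ resolves $f$, and chasing the definitions of the pullbacks gives $(g\circ f)^*=f^*\circ g^*$ on $\hpo$. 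As $f^*$ and $g^*$ are isomorphisms by hypothesis, so is $(g\circ f)^*$; hence $g\circ f$ is cohomologically constant.

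For the cohomologically trivial case, let $V'$ be the closure of a general fiber of $g$, with resolution $\Vbar'$, so that $\hpo(\Vbar')=0$ for all $p>0$. The closure $V''$ of the corresponding general fiber of $g\circ f$ is the closure of $f^{-1}(V')$, and $f$ restricts to a dominant rational map $V''\dra V'$ whose fiber over a general point of $V'$ is a general fiber of $f$. Passing to resolutions, one obtains a fibration $\Vbar''\to\Vbar'$ of smooth projective varieties whose general fiber is birational to a general fiber of $f$, hence a cohomologically trivial fibration (since $f$ is cohomologically trivial); by Corollary~\ref{CohTrivConst} it is then cohomologically constant, so $\hpo(\Vbar'')\cong\hpo(\Vbar')=0$ for all $p>0$. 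Thus $g\circ f$ is cohomologically trivial.

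The main obstacle here is not conceptual but is the rational-map bookkeeping: one must verify carefully that the pullback on forms is genuinely functorial for dominant rational maps (so that the identity $(g\circ f)^*=f^*\circ g^*$ really holds after choosing compatible resolutions), and that the closure of a general fiber of $g\circ f$ is indeed the $f$-preimage of a general fiber of $g$ with no spurious extra components — which is precisely the place the fibration hypotheses on $f$ and $g$ enter, guaranteeing irreducibility of $V''$ and that its general fiber over $V'$ is a general fiber of $f$.
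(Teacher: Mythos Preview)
Your proof is correct and follows essentially the same route as the paper: the constant case is dispatched by functoriality of pullback on $\hpo$, and the trivial case by restricting $f$ to the fiber of $g\circ f$ over a general $z\in Z$, observing this is a cohomologically trivial fibration onto the fiber of $g$, and applying Corollary~\ref{CohTrivConst}. The paper is terser---it simply asserts the constant case and does not pause to check that $g\circ f$ is a fibration---whereas you have been more careful about the rational-map bookkeeping (functoriality of pullback after choosing compatible resolutions, and transitivity of the irreducible-general-fiber condition via algebraic closedness in the function field tower), which is a reasonable thing to spell out.
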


\begin{proof}
If $f$ and $g$ are cohomologically constant, certainly $g \circ f$ is cohomologically constant.  Now assume $f$ and $g$ are cohomologically trivial and let $X_z$ (resp. $Y_z$) denote the closure of the fiber of $g\circ f$ (resp. $g$) over a general point $z\in Z$. Let $\Xbar_z$ (resp. $\Ybar_z$) denote a resolution of singularities of $X_z$ (resp. $Y_z$).  Note that generality of the point $z\in Z$ implies that the induced rational map $\Xbar_z\dra \Ybar_z$ is a cohomologically trivial fibration. Thus by Corollary~\ref{CohTrivConst} we have $\hpo(\Ybar_z)=\hpo(\Xbar_z)=0$ for all $p>0$. 
\end{proof}

Finally we prove an auxiliary result, which will eventually show that all of our maximal fibrations are ``generically proper" over their codomain. Let $X$ and $Y$ be smooth projective varieties and
\[
f\cl X\dra Y
\]
be a dominant rational map and $\Gamma_f\subset X\times Y$ the closure of the graph of $f$. Then $f$ can be extended across the locus where the projection $p\cl \Gamma_f\ra X$ is finite. 

\begin{definition}\label{almhom}
With the setup above, we say the \textbf{exceptional locus of $f$} is the locus in $X$ over which $p$ is not finite. We say $f$ is \textbf{almost holomorphic} if the exceptional locus of $f$ does not intersect the closure of a general fiber.
\end{definition}

\begin{lemma}\label{ruledness}
With the setup above, if $f$ is not almost holomorphic, then $Y$ is uniruled.
\end{lemma}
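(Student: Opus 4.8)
The plan is to analyze the geometry of the graph $\Gamma_f \subset X \times Y$ together with its two projections $p \cl \Gamma_f \to X$ and $q \cl \Gamma_f \to Y$. Recall $q$ is a fibration whose general fiber is (birational to) the general fiber of $f$, and $p$ is birational. If $f$ is not almost holomorphic, then by definition the exceptional locus $E \subset X$ — the locus over which $p$ fails to be finite — meets the closure of a general fiber of $f$. Equivalently, the $p$-exceptional locus $\widetilde{E} \subset \Gamma_f$ (the union of positive-dimensional fibers of $p$) dominates $Y$ under $q$: a general fiber $q^{-1}(y)$ meets $\widetilde{E}$.

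The key point is that the fibers of a birational projective morphism are rationally chain connected, hence rationally connected when the source is nice enough; more precisely, I will invoke the standard fact that if $p \cl \Gamma_f \to X$ is a proper birational morphism of varieties with $X$ smooth (or more generally with rational singularities), then every fiber of $p$ is rationally chain connected — this is a theorem of Hironaka--Matsumura type, or one can cite \cite[Cor. 1.3]{KollarRatCurves}-style results on the structure of exceptional fibers. So for a point $x \in E$, the positive-dimensional fiber $p^{-1}(x) \subset \Gamma_f$ is covered by rational curves. First I would resolve $\Gamma_f$ if necessary to assume it is smooth, which does not change the problem since we only care about uniruledness of $Y$, a birational invariant.

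Now I combine this with the fibration structure $q$. Pick a general $y \in Y$ and a point $\gamma \in q^{-1}(y) \cap \widetilde{E}$, which exists by the non-almost-holomorphicity. Through $\gamma$ there is a rational curve $C$ contracted by $p$ (lying in the fiber $p^{-1}(p(\gamma))$). I claim $q(C)$ is a positive-dimensional rational curve through the general point $y$: indeed $q|_C$ cannot be constant, for otherwise $C \subset q^{-1}(y)$ would also be contracted by $p$, forcing $p(C)$ to be a point and hence $C$ to be contracted by the injective-on-an-open-set map $(p,q)$ restricted to... — more carefully, $C$ cannot be contracted by both $p$ and $q$ since $\Gamma_f \hookrightarrow X \times Y$. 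Hence $q(C) \subset Y$ is a rational curve through the general point $y$, so $Y$ is uniruled. The main obstacle is making precise and correctly citing the rational connectedness of the fibers of the birational morphism $p$ — one must be careful about whether $\Gamma_f$ has mild enough singularities, which is why resolving first (or passing to a smooth model of the graph, legitimate since uniruledness of $Y$ is all we want) is the clean way to proceed; after that, the argument that the rational curves in $p$-fibers are not contracted by $q$ and sweep out a general point of $Y$ is routine.
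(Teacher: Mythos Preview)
Your proposal is correct and follows essentially the same approach as the paper: both use that, since $X$ is smooth, the fibers of the birational projection $p\cl \Gamma_f \to X$ are rationally chain connected, and that the failure of almost-holomorphicity makes these fibers pass through a general point of $Y$. The paper's version is more streamlined in one respect: it observes directly that the fibers of $p$ are \emph{subvarieties of $Y$} (since $p^{-1}(x) \subset \{x\}\times Y$), so there is no need to argue that $q$ does not contract the rational curve, and no need to resolve $\Gamma_f$.
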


\begin{proof}
As $X$ is smooth, the fibers of $p$ are rationally chain connected subvarieties of $Y$. Therefore, if the closure of a general fiber meets the exceptional locus of $f$, then there is a rational curve through a general point in $Y$.
\end{proof}

\section{Chow constant and Chow trivial fibrations}\label{sec:chowfibs}

In this section we define Chow constant and Chow trivial fibrations. We show that the property of being a Chow constant fibration is equivalent to having fibers which are Chow constant cycles in the sense of Huybrechts \cite[Def. 3.1]{HuyChow}. We give some examples of Chow constant fibrations, focusing for the sake of exposition on Chow constant fibrations where the fibers are K3 surfaces. We also prove Proposition~\ref{famsosurfaces} relating Chow constant fibrations and the Chow groups of their generic fibers. To start, recall the following fact about $\CH_0(X)$.

\begin{lemma}[{\cite[Ex. 16.1.11]{Fulton}}]\label{lem:chow0birational}
The group $\CH_0(X)$ is a birational invariant among smooth projective varieties.
\end{lemma}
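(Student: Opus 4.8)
The plan is to reduce the statement to the case of a proper birational \emph{morphism} and then prove that such a morphism induces an isomorphism on $\CH_0$. Given a birational map $f\cl X\dra Y$ of smooth projective varieties, resolve its indeterminacy to obtain a smooth projective $W$ with proper birational morphisms $p\cl W\ra X$ and $q\cl W\ra Y$. Once one knows that a proper birational morphism of smooth projective varieties induces an isomorphism on $\CH_0$, then $p_*$ and $q_*$ are isomorphisms, so $\CH_0(X)\xleftarrow{\ \sim\ }\CH_0(W)\xrightarrow{\ \sim\ }\CH_0(Y)$; this composite is the isomorphism $f_*$ (defined, exactly as for $p$-forms, by resolving $f$). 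Hence it suffices to treat $\pi\cl W\ra X$ proper birational with $W,X$ smooth projective.

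For such a $\pi$, since $W$ is smooth the graph $\gamma_\pi\cl W\hra W\times X$ is a regular embedding, so there is a Gysin pullback $\pi^*\cl \CH^*(X)\ra \CH^*(W)$, obtained by composing the flat pullback along $W\times X\ra X$ with $\gamma_\pi^{!}$, and it satisfies the projection formula $\pi_*(\pi^*a\cap b)=a\cap \pi_* b$. Taking $b=[W]$ and using $\pi_*[W]=[X]$ (as $\pi$ has degree one) gives $\pi_*\pi^*=\id$ on all of $\CH^*(X)$; in top codegree (where $\CH^{\dim X}=\CH_0$) this says $\pi_*\pi^*=\id$ on $\CH_0(X)$, so $\pi_*\cl\CH_0(W)\ra\CH_0(X)$ is surjective and $\pi^*$ is injective on $\CH_0$.

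The substantive point is the reverse identity $\pi^*\pi_*=\id$ on $\CH_0(W)$, since it forces injectivity of $\pi_*$ (if $\pi_*\alpha=0$ then $\alpha=\pi^*\pi_*\alpha=0$). It is enough to check this on the class of a closed point $w\in W$, i.e. to show $\pi^*[\pi(w)]=[w]$ in $\CH_0(W)$. The cycle $\pi^*[\pi(w)]$ is supported on the fiber $F:=\pi^{-1}(\pi(w))$, which is connected (Zariski's connectedness theorem, $X$ being normal) and, because $X$ is smooth, rationally chain connected — the same fact about fibers of birational morphisms from a smooth variety used in the proof of Lemma~\ref{ruledness}. Hence any two closed points of $F$ have the same class in $\CH_0(W)$, so the image of $\CH_0(F)\ra\CH_0(W)$ is $\ZZ\cdot[w]$ and $\pi^*[\pi(w)]=m\,[w]$ for some $m\in\ZZ$. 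Pushing forward and using $\pi_*\pi^*=\id$ together with $\pi_*[w]=[\pi(w)]$ gives $m\,[\pi(w)]=[\pi(w)]$ in $\CH_0(X)$, and comparing degrees forces $m=1$.

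The main obstacle is exactly this last step: the formal machinery (Gysin pullback, projection formula) yields only one of the two composition identities for free, and obtaining the other requires the geometric input that the fibers of a proper birational morphism from a smooth variety are connected and rationally chain connected, so that $\pi^*$ of a point is a multiple of a single point class — after which a degree count pins the multiple down to $1$. Everything else (resolution of indeterminacy, the reduction, and the compatibility of the resulting isomorphism with $f_*$) is routine.
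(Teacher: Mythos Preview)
The paper does not prove this lemma; it simply records it as a citation to Fulton's \emph{Intersection Theory}, Ex.~16.1.11. So there is no ``paper's own proof'' to compare against, and your write-up stands as an independent proof.

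Your argument is correct. The reduction to a proper birational morphism by resolving indeterminacy is standard, and the use of the Gysin pullback together with the projection formula to get $\pi_*\pi^*=\id$ is exactly how one sets things up. The key geometric input you invoke---that fibers of a proper birational morphism to a smooth variety are rationally chain connected---is indeed the same fact the paper uses (without proof) in Lemma~\ref{ruledness}, so you are consistent with the paper's conventions. Once the fiber $F$ is rationally chain connected, every closed point of $F$ has the same class in $\CH_0(W)$, so $\pi^*[\pi(w)]=m[w]$; the degree computation then pins down $m=1$, giving $\pi^*\pi_*=\id$ on point classes and hence on all of $\CH_0(W)$.

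It may be worth noting that Fulton's own outline is slightly different in flavor: one works directly with the correspondences $[\Gamma]\in \CH^n(X\times Y)$ and $[\Gamma^t]\in \CH^n(Y\times X)$ and observes that $[\Gamma^t]\circ[\Gamma]-[\Delta_X]$ is supported over a proper closed subset of $X$, hence acts trivially on $\CH_0$ by a moving argument available on smooth varieties. Your route via rational chain connectedness of fibers replaces that moving argument with a more geometric fact already present in the paper, which is a perfectly good (and arguably more transparent) substitute.
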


Therefore, for a fibration $f$ we may define a pushforward at the level of 0-cycles in analogy with our definition of pull-back of $p$-forms. Let
\[
\begin{tikzcd}
&\Xbar\arrow[dl]\arrow[dr,"\fbar"]&\\
X\arrow[rr,dashed,"f"]&&Y
\end{tikzcd}
\]
be a resolution of the map $f$. Then we define $f_*$ to be the composition:
\[
f_*\cl \CH_0(X)\cong \CH_0(\Xbar)\xrightarrow{\fbar_*}\CH_0(Y).
\]
This is independent of the resolution of $f$.

\begin{definition}
We say that a fibration $f \cl X \dra Y$ between smooth projective varieties is a \textbf{Chow constant fibration} if $f_*$ is an isomorphism.
\end{definition}

It will be useful to consider Chow-theoretic properties of subvarieties. Let $V\subset X$ be a subvariety and let $\Vbar$ be a resolution of singularities of $V$.

\begin{definition}
We say $V$ is a \textbf{Chow constant subvariety} (see \cite[Def. 3.1]{HuyChow}) if for any two points $x_1,x_2\in V$ we have $x_1= x_2\in \CH_0(X)$. We say that $V$ is a \textbf{Chow trivial subvariety} if $\CH_0(\Vbar)\cong\ZZ$.
\end{definition}

\begin{definition}
We say a fibration is a \textbf{Chow trivial fibration} if the closure of a general fiber is a Chow trivial subvariety.
\end{definition}

Now we show that Chow constant fibrations are exactly the fibrations where the general fibers are Chow constant subvarieties.

\begin{theorem}\label{thm:chowconstantfibers}
Let $f\cl X\dra Y$ be a fibration of smooth projective varieties. Then $f$ is a Chow constant fibration $\iff$ a general fiber of $f$ is a Chow constant subvariety.
\end{theorem}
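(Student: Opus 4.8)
**The plan is to prove both implications by carefully comparing $\CH_0$ of the total space with $\CH_0$ of the base and the fibers, using the fact that $0$-cycles can be moved to lie over general points of $Y$.**

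First I would reduce to the case where $f$ is a morphism: since both the hypothesis and the conclusion only concern the behavior over a general point of $Y$ (and $\CH_0$ is a birational invariant by Lemma~\ref{lem:chow0birational}), I may replace $X$ by a resolution $\Xbar$ of the graph of $f$ and assume $f\cl X\ra Y$ is a regular fibration, smooth over a nonempty open $U\subset Y$. The kernel of $f_*\cl \CH_0(X)\ra\CH_0(Y)$ is always generated by differences $[x_1]-[x_2]$ where $x_1,x_2$ lie in fibers over the same point, and in fact one can reduce to $x_1,x_2$ lying over points of $U$; this is the "moving" step, where one uses that a general $0$-cycle on $X$ can be represented by points lying over general (hence smooth) fibers. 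Note also $f_*$ is always surjective, since any point of $Y$ lifts to a point of $X$. So $f$ is Chow constant $\iff$ $\ker(f_*)=0$ $\iff$ any two points in a general fiber $X_y$ ($y\in U$) are rationally equivalent \emph{in $X$}, with the subtlety being whether we also need to handle points in the same non-general fiber.

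For the direction $(\Leftarrow)$: assume a general fiber $X_y$ is a Chow constant subvariety, i.e. any two points of $X_y$ are equal in $\CH_0(X)$. I would show $\ker(f_*)=0$. Given a $0$-cycle $z=\sum n_i[x_i]$ with $f_*z=0$ in $\CH_0(Y)$, first move $z$ so that all $x_i$ lie over points of $U$; then $f_*z=\sum n_i[f(x_i)]=0$ in $\CH_0(Y)$, which means $\sum n_i[f(x_i)]$ bounds, i.e. there is a curve (or family of curves) in $Y$ realizing the rational equivalence. Lifting this curve to $X$ and using that over each point of $U$ all points of the fiber are rationally equivalent in $X$, one replaces $z$ by a cycle supported on a single general fiber with degree zero on each component, hence $z=0$ in $\CH_0(X)$. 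The mechanics here are the standard ones for showing a proper flat morphism with "$\CH_0$-trivial fibers in $X$" induces an injection; I would cite or adapt the argument in Fulton or in Vial's paper.

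For the direction $(\Rightarrow)$: assume $f$ is Chow constant and let $y\in U$ be general; I want any $x_1,x_2\in X_y$ to satisfy $[x_1]=[x_2]$ in $\CH_0(X)$. Since $f(x_1)=f(x_2)=y$, we have $f_*([x_1]-[x_2])=0$ in $\CH_0(Y)$, so by injectivity of $f_*$ we get $[x_1]=[x_2]$ in $\CH_0(X)$ directly. This direction is essentially immediate once $f_*$ is known injective. \textbf{The main obstacle} I anticipate is the moving step in $(\Leftarrow)$: making precise that an arbitrary $0$-cycle and an arbitrary rational equivalence in $Y$ can be lifted/perturbed to be "in general position" relative to $f$ so that the hypothesis on general fibers applies — i.e. handling the bad locus $Y\setminus U$ and the non-reduced or reducible fibers there. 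This is where one needs a genuine moving lemma for $0$-cycles (or the observation that $\CH_0(X)$ is generated by points over $U$ because $f^{-1}(Y\setminus U)$ is a proper closed subset and every point of it is rationally equivalent to one pushed off it, using unirationality/rational connectedness of fibers of the graph projection as in Lemma~\ref{ruledness}, or simply a generic smoothness argument). Once that is in hand, the rest is formal.
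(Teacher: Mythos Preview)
Your forward direction $(\Rightarrow)$ is correct and matches the paper: once $f_*$ is injective, any two points in a fiber have the same image and hence are rationally equivalent in $X$.

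The backward direction $(\Leftarrow)$ has a genuine gap. Your ``lifting the curve'' step, when made precise, amounts to choosing a multisection of some degree $d$ over each curve in $Y$ that witnesses a rational equivalence. When you pull back a function along such a lift, its divisor becomes a sum of $d$ points in each relevant fiber; Chow constancy lets you replace those $d$ points by $d$ copies of a single point. The upshot is $d\cdot z = 0$ in $\CH_0(X)$, not $z=0$: you have only shown that $\ker(f_*)$ is torsion. The ``standard'' arguments you allude to (in Fulton, or of Colliot-Th\'el\`ene--Pirutka type) concern fibrations whose general fiber satisfies $\CH_0(X_y)\cong \ZZ$, which is strictly stronger than ``$X_y$ is a Chow constant subvariety of $X$''. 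Under that stronger hypothesis a degree-zero cycle on a fiber already vanishes in $\CH_0(X_y)$, so no torsion appears; here it does, and you have not addressed it.

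The paper's proof carries out essentially the same multisection computation (framed as showing that $(i_*\circ g^*)\circ f_*$ is multiplication by $d$) to conclude $\ker(f_*)$ is $d$-torsion, and then supplies the missing idea: \Roi tman's theorem. Since the difference of two points in a fiber is torsion in $\CH_0(X)$, its image in $\Alb(X)$ is torsion; connectedness of the fiber forces $\alb_X$ to be constant on fibers, so $\alb_X$ factors through $Y$. \Roi tman's identification $\CH_0(X)_\tors\cong \Alb(X)_\tors$ then shows the composite $\CH_0(X)_\tors\to\CH_0(Y)_\tors\to\Alb(X)_\tors$ is an isomorphism, hence $f_*$ is injective on torsion. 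Without this step, or something equivalent, your argument does not close.
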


\begin{proof}
If $f_*$ is a Chow constant fibration, then a general fiber is clearly a Chow constant subvariety. For the other direction, we first show that $f_*$ is an isomorphism modulo torsion, i.e. after tensoring with $\QQ$. Then, we use \Roi tman's theorem to complete the proof.

By definition of $f_*$ we are free to resolve $f$, i.e. assume that $f$ is everywhere defined. It is clear that $f_*\cl \CH_0(X)\ra \CH_0(Y)$ is a surjection. We must show it is also injective. Let $i:Z\hookrightarrow X$ be a smooth multisection of $f$ of degree $d$, i.e. a smooth and closed subvariety which maps generically finitely onto $Y$. Let $g=f|_Z$. There is an open set $U\subset Y$ over which $g$ is \'etale such that for any point $y\in U$ the fiber $X_y$ is a Chow-constant subvariety.

Both of the following compositions
\[
(i_*\circ g^*)\circ f_*\cl \CH_0(X)\ra \CH_0(X) \text{ and }f_*\circ (i_*\circ g^*)\cl \CH_0(Y)\ra \CH_0(Y),
\]
are equal to multiplication by $d$. For the second map $f_*\circ (i_*\circ g^*)$ this is straightforward. To prove it for $(i_*\circ g^*)\circ f_*$, we use the following: any $\alpha\in \CH_0(X)$ can be moved so that it is supported on $f^{-1}(U)$, and for any point $x\in f^{-1}(U)$ we have $(i_*\circ g^*)\circ f_*(x)$ is a union of $d$-points in $X_{f(x)}$. As $X_{f(x)}$ is a Chow constant subvariety, we have $(i_*\circ g^*)\circ f_*(x)=d\cdot x\in \CH_0(X)$. Thus $(i_*\circ g^*)\circ f_*$ is equal to multiplication by $d$, which implies
\[
f_*\otimes \QQ \cl \CH_0(X)\otimes \QQ \ra \CH_0(Y)\otimes \QQ
\]
is an isomorphism. Therefore the kernel of $f_*$ is $d$-torsion.

The previous paragraph shows that if $x_1,x_2\in X_y$ are two points in a fiber of $f$ then the difference $x_1-x_2$ is torsion in $\CH_0(X)$. Let
\[
\alb_X\cl X\ra \Alb(X)
\]
be the Albanese map of $X$. For any two points $x_1,x_2\in X_y$, the difference $\alb_X(x_1)-\alb_X(x_2)\in\Alb(X)$ is torsion. But as $X_y$ is connected and the torsion points are countable, this implies that the map $\alb_X$ is constant on the fibers of $f$. So there is a factorization:
\[
\begin{tikzcd}
&Y\arrow[dr,"\exists"]&\\
X\arrow[ur,"f"]\arrow[rr,"\alb_X"]&&\Alb(X).
\end{tikzcd}
\]
Now \Roi tman's theorem \cite{RoitmanTorsion} implies the composition
\[
\CH_0(X)_\tors \xrightarrow{f_*} \CH_0(Y)_\tors \ra \Alb(X)_\tors\cong\CH_0(\Alb(X))_\tors
\]
is an isomorphism. This proves that $f_*$ is injective, so it is an isomorphism.
\end{proof}

\begin{remark}
In the previous theorem one can weaken the smoothness hypotheses quite a bit. To show that the kernel of $f_*$ is a torsion group requires no smoothness. To conclude that the kernel of $f_*$ is trivial, it would suffice to assume that $X$ and $Y$ are normal, and that a resolution of singularities $\Xbar$ of $X$ induces an isomorphism $\CH_0(\Xbar)\cong \CH_0(X).$
\end{remark}

The following corollary is immediate.

\begin{corollary}\label{cor:chowtrivialimplieschowconstant}
Let $f\cl X\dra Y$ be a fibration of smooth projective varieties. If $f$ is a Chow trivial fibration, then it is a Chow constant fibration.
\end{corollary}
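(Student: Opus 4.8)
The plan is to deduce this directly from Theorem~\ref{thm:chowconstantfibers}: by that theorem it suffices to show that the closure $V$ of a general fiber of a Chow trivial fibration $f$ is a Chow constant subvariety of $X$. So fix a resolution of singularities $r\cl \Vbar\to V$ and write $\iota\cl V\hra X$ for the inclusion. Since $f$ is Chow trivial, by definition $\CH_0(\Vbar)\cong \ZZ$; as $\Vbar$ is smooth, connected, and projective, the degree map $\CH_0(\Vbar)\to \ZZ$ is then an isomorphism, so \emph{every} degree-$1$ zero-cycle on $\Vbar$—in particular the class of any closed point—represents the same element of $\CH_0(\Vbar)$.

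Next I would run the following short argument. Given two closed points $x_1,x_2\in V$, surjectivity of $r$ lets us choose points $\tilde x_1,\tilde x_2\in\Vbar$ with $r(\tilde x_i)=x_i$. By the previous paragraph $\tilde x_1=\tilde x_2$ in $\CH_0(\Vbar)$. The composition $\iota\circ r\cl \Vbar\to X$ is a morphism between projective varieties, hence proper, so applying the proper pushforward $(\iota\circ r)_*\cl \CH_0(\Vbar)\to \CH_0(X)$ gives $x_1=x_2\in \CH_0(X)$. Thus $V$ is a Chow constant subvariety, and since $V$ is the closure of a general fiber of $f$, Theorem~\ref{thm:chowconstantfibers} immediately yields that $f$ is a Chow constant fibration.

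I do not expect any genuine obstacle here—this is why the statement is labeled a corollary. The only points requiring (minor) care are that the resolution $r$ is surjective, so that every closed point of $V$ lifts to $\Vbar$ (automatic), and that the notion of ``general fiber'' appearing in the definition of a Chow trivial fibration matches the one in Theorem~\ref{thm:chowconstantfibers}, so that the theorem applies verbatim; both are defined by resolving $f$ and taking a general fiber, so there is nothing to reconcile. When $V$ is already smooth one can skip the resolution, but routing the argument through $\Vbar$ in general costs nothing.
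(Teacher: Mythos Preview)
Your argument is correct and is exactly the intended one: the paper records this as an immediate consequence of Theorem~\ref{thm:chowconstantfibers}, and you have simply spelled out the routine verification that a Chow trivial general fiber is a Chow constant subvariety via pushforward from a resolution.
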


Moreover, one may compose these fibrations:

\begin{corollary}\label{cor:compositionofchowfib}
Let $f\cl X\dra Y$ and $g\cl Y\dra Z$ be two fibrations of projective varieties.  If $f$ and $g$ are both Chow constant (resp. trivial) fibrations then $g\circ f$ is a Chow constant (resp. trivial) fibration.
\end{corollary}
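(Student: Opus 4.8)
The plan is to handle the "Chow constant" and "Chow trivial" cases separately, reducing the Chow trivial case to the Chow constant case plus a fiberwise argument, exactly as was done for cohomologically constant/trivial fibrations in Corollary~\ref{cor:compositionofcohfib}. The Chow constant case is essentially formal: after resolving $f$, $g$, and $g\circ f$ so that all three maps are morphisms (using Lemma~\ref{lem:chow0birational} to see this does not change the relevant Chow groups or the pushforward maps), the pushforward on zero-cycles is functorial, so $(g\circ f)_* = g_* \circ f_*$ as maps $\CH_0(X)\to\CH_0(Z)$. Since $f_*$ and $g_*$ are both isomorphisms by hypothesis, so is their composition; hence $g\circ f$ is a Chow constant fibration.

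For the Chow trivial case, I would first observe that Corollary~\ref{cor:chowtrivialimplieschowconstant} already guarantees $g\circ f$ is Chow constant, so it remains only to check that the closure of a general fiber of $g\circ f$ is a Chow \emph{trivial} subvariety, i.e. that it has $\CH_0\cong\ZZ$ after resolving singularities. Let $z\in Z$ be a general point, let $X_z$ (resp. $Y_z$) be the closure of the fiber of $g\circ f$ (resp. $g$) over $z$, and let $\Xbar_z$ (resp. $\Ybar_z$) be a resolution of singularities. The key point is that generality of $z$ forces the induced rational map $\Xbar_z\dra\Ybar_z$ to again be a fibration whose general fiber is (birational to) a general fiber of $f$, hence a Chow trivial subvariety of $\Xbar_z$; moreover $\Ybar_z$ is birational to a general fiber of $g$, hence Chow trivial. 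Thus $\CH_0(\Ybar_z)\cong\ZZ$, and applying Theorem~\ref{thm:chowconstantfibers} to the map $\Xbar_z\dra\Ybar_z$ shows $(\Xbar_z\dra\Ybar_z)_*\cl\CH_0(\Xbar_z)\to\CH_0(\Ybar_z)$ is an isomorphism, so $\CH_0(\Xbar_z)\cong\ZZ$ as well. Therefore $X_z$ is a Chow trivial subvariety and $g\circ f$ is a Chow trivial fibration.

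The main obstacle I anticipate is the bookkeeping needed to justify that a general fiber of $\Xbar_z\dra\Ybar_z$ really is a Chow trivial subvariety of $\Xbar_z$ (not merely of $X$): one needs that for $z$ general, the general fiber of $g\circ f$ over $z$ fibers over the general fiber of $g$ over $z$ with general fibers birational to general fibers of $f$, which requires choosing compatible resolutions and appealing to generic smoothness so that "general fiber over a general point" behaves well under composition. This is the same subtlety that appears in the proof of Corollary~\ref{cor:compositionofcohfib}, and it is routine but must be stated carefully. Everything else is either functoriality of proper pushforward or a direct citation of Theorem~\ref{thm:chowconstantfibers}.
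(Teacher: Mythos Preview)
Your proposal is correct and follows essentially the same approach as the paper: the Chow constant case is handled by functoriality of pushforward after resolving, and the Chow trivial case mirrors the argument of Corollary~\ref{cor:compositionofcohfib}, using Theorem~\ref{thm:chowconstantfibers} (via Corollary~\ref{cor:chowtrivialimplieschowconstant}) to conclude that a Chow trivial fibration over a Chow trivial base has $\CH_0\cong\ZZ$. The paper's proof is terser but the content is the same; your discussion of the bookkeeping for general fibers is exactly the point the paper subsumes under ``a similar argument to the proof of Corollary~\ref{cor:compositionofcohfib}.''
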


\begin{proof}
If $f$ and $g$ are Chow constant, it is straightforward to see $g\circ f$ is Chow constant. Proving triviality follows an similar argument to the proof of Corollary \ref{cor:compositionofcohfib}. We just note that by the previous theorem, if
\[
f\cl X\dra Y
\]
is a Chow trivial fibration over a Chow trivial variety $Y$, then $\CH_0(X)=\ZZ.$  
\end{proof}

\begin{proposition}\label{chowimpliescohom}
Let $f\cl X\dra Y$ be a fibration.
\begin{enumerate}
\item If $f$ is a Chow constant fibration, then $f$ is a cohomologically constant fibration.
\item If $f$ is a Chow trivial fibration, then $f$ is a cohomologically trivial fibration.
\end{enumerate}
\end{proposition}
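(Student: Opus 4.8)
The plan is to prove both parts using the classical fact that a surjective pushforward on Chow groups of $0$-cycles (over $\CC$, or any uncountable algebraically closed field) forces a pullback iso on holomorphic forms; this is the content of Bloch-Srinivas–type ``decomposition of the diagonal'' arguments, but here it comes out more directly from Mumford's theorem. For part (1), since $f_*\cl \CH_0(X)\to\CH_0(Y)$ is an isomorphism after resolving $f$, I may assume $f\cl X\to Y$ is a regular fibration of smooth projectives. We already know from Lemma~\ref{lem:pformsbirational} that $f^*\cl \hpo(Y)\to\hpo(X)$ is injective for every $p$, so the only thing to show is surjectivity. Suppose $\omega\in\hpo(X)$ is not in the image of $f^*$. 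I would like to produce from this a contradiction with $f_*$ being injective on $\CH_0$.

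The key step is the following: if $f_*$ is injective on $\CH_0$, then a general fiber $X_y$ is a Chow constant subvariety (indeed this is the content of Theorem~\ref{thm:chowconstantfibers}), so any two points of $X_y$ are rationally equivalent in $X$. By Mumford's theorem / the infinitesimal invariant of Bloch–Srinivas (or, more elementarily, by the argument that a form vanishing on a rationally connected family of $0$-cycles must descend), this forces $\omega\vert_{X_y}=0$ for a general fiber. Concretely, I would invoke Proposition~\ref{descendingForms}: take $Z=X$ with $\psi=\id_X$ and $\pi=f$; then condition (2) — that the fibers of $X\to Y$ map into Voisin's foliation — is exactly the statement that $\omega\vert_{X_y}$ has vanishing contraction against all vertical tangent vectors, i.e. that the restriction of $\omega$ to a general fiber is a pullback of a form on $Y$, hence (the fibers being positive-dimensional or zero-dimensional) vanishes along the fiber direction. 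Once condition (2) holds, Proposition~\ref{descendingForms}(1) says every $\omega\in\hpo(X)$ descends: $\omega=f^*\eta$ for some $\eta\in\hpo(Y)$. That gives surjectivity of $f^*$ and completes part (1), modulo the claim that Chow constancy of the general fiber implies the fibers lie in Voisin's foliation.

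That implication is the crux, and I expect it to be the main obstacle to write cleanly. The statement is: if every pair of points in a general fiber $X_y$ is rationally equivalent in $X$, then for every $\omega\in\hpo(X)$ and every vertical tangent vector $v$ along $X_y$ one has $\omega\lrcorner v=0$. One way to see this: rational equivalence of points $x_1,x_2\in X_y$ is witnessed by a curve $C\to X$ with two marked points mapping to $x_1,x_2$; varying $x_1$ in $X_y$ while keeping $x_2$ fixed gives a family of such curves sweeping out (a dense subset of) $X_y$, and pulling $\omega$ back to the total space of this family and using that a holomorphic $p$-form on a variety dominated by a family of curves through a fixed point must vanish when contracted in the ``moving'' directions — this is precisely Mumford's argument, cf.\ \cite[\S 10]{VoisinChowBook}. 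Since the moving directions span $T_{X_y}$ at a general point, we get $\omega\vert_{X_y}=0$, a fortiori $\omega\lrcorner v=0$ for vertical $v$. I would cite Mumford's theorem in the form ``$\CH_0$ supported on a subvariety $W$ implies $\hpo$ is supported on $W$'' — here $W$ can be taken to be a multisection, which is generically finite over $Y$, so the fiberwise restrictions of $p$-forms vanish for $p\ge 1$ along the fibers.

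For part (2), suppose $f$ is Chow trivial, so the resolution $\Vbar$ of the general fiber $V$ has $\CH_0(\Vbar)\cong\ZZ$. Then $\Vbar$ is in particular a variety with trivial $\CH_0$, and the forward direction of Bloch's conjecture (Conjecture~\ref{Bloch}) — which, as the paper notes, is a theorem (Mumford's theorem again, or \Roi tman plus the argument above) — gives $\hpo(\Vbar)=0$ for all $p>0$. That is exactly the definition of $f$ being a cohomologically trivial fibration, so part (2) is essentially immediate once part (1)'s key input (Mumford's theorem in the form ``$\CH_0(\Vbar)=\ZZ\Rightarrow\hpo(\Vbar)=0$'') is available. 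Alternatively, part (2) follows from part (1) applied to the fibration $\Vbar\to\Spec(\CC)$ together with Corollary~\ref{cor:chowtrivialimplieschowconstant} restricted to the fiber; I would present whichever is shorter. The only subtlety to flag is making sure ``general fiber'' statements are used consistently — Chow constancy/triviality is defined via the general fiber, and the vanishing of restricted forms also only needs to hold on a general fiber, which is all Proposition~\ref{descendingForms} requires.
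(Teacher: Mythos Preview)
Your proposal is correct in substance, but it takes a more circuitous route than the paper. The paper proves both parts by invoking a single auxiliary result, Lemma~\ref{lem:chowconstantdescentofforms}: if a family $Z\to B$ maps to $X$ so that the image of each general fiber is a Chow constant subvariety, then every holomorphic $p$-form on $X$ pulls back to a form coming from $B$. This lemma is proved directly via the Bloch--Srinivas decomposition of the diagonal (reduce to the case of a section, write $\Gamma_\psi = \Gamma_{\psi\circ\sigma\circ\pi} + W$ with $W$ supported on $D\times X$, and note $W_*$ kills $\hpo$). Part (1) then follows by taking $Z=X$, $\psi=\id$, $\pi=f$; part (2) is the special case $B=\Spec(\CC)$, or equivalently Mumford's original theorem.

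Your approach instead routes through Proposition~\ref{descendingForms}: you want to verify condition (2) there (that fibers lie in Voisin's foliation) and then conclude condition (1) (forms descend). But the work you do to verify condition (2) --- the Mumford-style spreading argument with families of curves witnessing rational equivalence --- is essentially the same work as proving Lemma~\ref{lem:chowconstantdescentofforms} directly, so the passage through $\Vc_X$ buys nothing and adds a layer. Moreover, your sketch of that spreading argument (``varying $x_1$ while keeping $x_2$ fixed gives a family of curves\dots'') is the part that would need the most care to make rigorous; the Bloch--Srinivas correspondence argument is both shorter and cleaner here, and you already flagged it as an option in your first paragraph. For part (2) the two approaches coincide: both simply cite the known implication $\CH_0(\Vbar)\cong\ZZ \Rightarrow \hpo(\Vbar)=0$.
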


\begin{proof}
Part (1) holds by the following lemma. Part (2) can be seen as a special case of the following lemma or follows from Mumford's original paper \cite{Mum0}.
\end{proof}

Suppose
\[
\begin{tikzcd}
Z\arrow[r,"\psi"]\arrow[d,"\pi"]&X\\
B&
\end{tikzcd}
\]
is a diagram of smooth projective varieties such that $\pi$ is surjective and a general fiber of $\pi$ is irreducible.

\begin{lemma}\label{lem:chowconstantdescentofforms}
Let $Z_b:=\pi^{-1}(b)$ be a fiber of $\pi$ over a general point $b\in B$. If the image $\psi(Z_b)$ is a Chow constant subvariety, then for any $\omega \in \hpo(X)$ there is an $\eta\in \hpo(B)$ such that $\psi^*(\omega)=\pi^*(\eta)$.  As a special case, this implies that if 
$\CH_0(X) = \mathbb{Z}$, then $H^{p,0}(X) = 0$ for all $p > 0$.  
\end{lemma}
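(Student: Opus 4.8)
The strategy is to show directly that $\psi^*(\omega)$ restricts to zero on the general fiber $Z_b$, so that it descends to a form on the base. Fix $\omega \in \hpo(X)$ and a general point $b \in B$; we may assume $\pi$ is smooth near $Z_b$ and that $\psi(Z_b) \subset X$ is a Chow constant subvariety. The key observation is that for a Chow constant subvariety $V = \psi(Z_b)$, the restriction $\omega|_{V^{\mathrm{sm}}}$ of $\omega$ to the smooth locus of $V$ vanishes: this is exactly Mumford's theorem \cite{Mum0} (as refined e.g. in \cite{VoisinChowBook}), which says that if a family of $0$-cycles on $X$ is constant in $\CH_0(X)$, then any holomorphic form pulls back to zero along the family; applying this to the two-variable family of $0$-cycles $(x_1, x_2) \mapsto x_1 - x_2$ on $V \times V$ (all of which are zero in $\CH_0(X)$) forces $\omega|_V = 0$ on the smooth locus of $V$. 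Pulling back along $\psi$, we get that $\psi^*(\omega)$ vanishes on the smooth locus of $Z_b$, and since this holds for a general fiber, $\psi^*(\omega)|_{Z_b} = 0$ for all $b$ in a dense open of $B$.

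Once we know $\psi^*(\omega)$ kills the relative tangent sheaf $T_{Z/B}$ generically, we are in the situation of Proposition~\ref{descendingForms}: the fibers of $Z \to B$ map into (indeed, are annihilated by all of $\hpo(X)$, so lie in) Voisin's foliation $\Vc_X$, which is condition (2) of that proposition applied with $Y = B$. Hence by Proposition~\ref{descendingForms} there exists $\eta \in \hpo(B)$ with $\psi^*(\omega) = \pi^*(\eta)$, as desired. Alternatively, one can repeat the local-coordinates and trace argument from the proof of Proposition~\ref{descendingForms} verbatim: in coordinates $x_1,\dots,x_r,y_1,\dots,y_s$ at a general point of $Z$ with the $x_i$ cutting out the fibers, the vanishing of $\psi^*(\omega) \lrcorner \tfrac{\partial}{\partial x_j}$ shows $\psi^*(\omega)$ has no $dx_j$ terms, hence descends to a meromorphic form on $B$, which is then seen to be holomorphic by averaging via the trace over a multisection.

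For the final assertion, take $B = \Spec(\CC)$, $Z = X$, $\psi = \pi = \id$. If $\CH_0(X) = \ZZ$, then $X$ itself is a Chow constant subvariety of $X$, so the lemma gives that every $\omega \in \hpo(X)$ equals $\pi^*(\eta)$ for $\eta \in \hpo(\Spec \CC) = 0$ when $p > 0$; thus $\hpo(X) = 0$ for all $p > 0$.

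The main obstacle is the first step: justifying carefully that a Chow constant subvariety $V \subset X$ has $\omega|_{V^{\mathrm{sm}}} = 0$. This is the heart of Mumford's argument, and one must be slightly careful because $V$ may be singular --- the clean statement is about the pullback of $\omega$ to a resolution $\Vbar$ of $V$, and one invokes the Mumford--Roitman reciprocity/infinitesimal argument (all $0$-cycles $z_1 - z_2$ on $\Vbar$ die in $\CH_0(X)$, so the induced map on tangent spaces of symmetric products kills $\omega$) to conclude $\psi^*\omega$ restricted to the fiber is zero. Everything after that is a routine descent argument, either by direct appeal to Proposition~\ref{descendingForms} or by mimicking its proof.
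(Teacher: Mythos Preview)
Your proposal has a genuine gap at the transition from ``$\psi^*(\omega)$ restricts to zero on the general fiber'' to ``$\psi^*(\omega)$ kills the relative tangent sheaf $T_{Z/B}$.'' These are not the same statement. Mumford's argument applied fiber-by-fiber, as you do, gives that the restriction $(\psi|_{Z_b})^*\omega$ vanishes \emph{as an element of $H^0(Z_b,\Omega^p_{Z_b})$}; that is, $\psi^*(\omega)(v_1,\dots,v_p)=0$ whenever all $v_i\in T_{Z_b}$. What Proposition~\ref{descendingForms} requires is the contraction $\psi^*(\omega)\lrcorner v=0$ in $\Omega^{p-1}_Z$ for $v\in T_{Z/B}$, i.e.\ $\psi^*(\omega)(v,w_1,\dots,w_{p-1})=0$ for $v\in T_{Z_b}$ and \emph{arbitrary} $w_i\in T_Z$. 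The second condition is strictly stronger. For a clean counterexample to the implication you are using: take any fibration $\pi\cl Z\to B$ with fibers of dimension less than $p$ and any nonzero $p$-form $\alpha$ on $Z$; then $\alpha|_{Z_b}=0$ in $\Omega^p_{Z_b}$ for trivial dimension reasons, yet $\alpha$ certainly need not be pulled back from $B$. Equivalently, a single Chow constant subvariety $V\subset X$ need not satisfy $T_V\subset \Vc_X$: a rational curve on a K3 surface is Chow constant and has $\omega|_V=0$ automatically, but $\Vc_X=0$ there since the holomorphic $2$-form is nondegenerate. So ``$\omega|_V=0$'' does not place $V$ inside Voisin's foliation, and your appeal to Proposition~\ref{descendingForms}(2) is not justified.

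The paper avoids this by never working fiber-by-fiber. It applies Bloch--Srinivas globally to the correspondence $\Gamma_\psi\subset Z\times X$: after base-changing so that $\pi$ acquires a section $\sigma$, the Chow-constancy hypothesis says the fibers of $\Gamma_\psi$ and of $\Gamma_{\psi\circ\sigma\circ\pi}$ over each $z\in Z$ are rationally equivalent, so $\Gamma_\psi=\Gamma_{\psi\circ\sigma\circ\pi}+W$ in $\CH_*(Z\times X)\otimes\QQ$ with $W$ supported on $D\times X$. Acting on $\hpo(X)$ then yields $\psi^*(\omega)=\pi^*((\psi\circ\sigma)^*\omega)$ directly, since $W_*$ factors through $H^{p-1,-1}(D)=0$. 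If you want to salvage your route, the fix is precisely to run the Mumford/Bloch--Srinivas argument on the whole family (for instance on $Z\times_B Z\to X$) rather than on an individual $Z_b$; but once you do that you have essentially reproduced the paper's proof, and Proposition~\ref{descendingForms} is no longer needed.
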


\begin{proof}
This is well known but we include a proof for the convenience of the reader. We follow the outline of the proof \cite[Thm. 3.13]{VoisinChowBook} which is a very similar situation. First we reduce to the case that $\pi$ has a section. Taking a generically finite cover $B'\ra B$ we can assume there is a diagram:
\[
\begin{tikzcd}
Z'\arrow[rr,bend left=40,swap,"\psi'"]\arrow[r]\arrow[d,"\pi'"]&Z\arrow[r,swap,"\psi"]\arrow[d,"\pi"]&X\\
B'\arrow[r,"\phi"]&B&
\end{tikzcd}
\]
satisfying (1) $\pi'$ has a section $\sigma: B'\ra Z'$,
(2) $Z'$ and $B'$ are smooth, projective varieties, and (3) there is a nonempty open set $U\subset B$ over which $\pi'$ is the base change of $\pi$.

Note that $\psi'$ and $\pi'$ satisfy the hypotheses in the lemma. Furthermore, if there exists $\eta'\in \hpo(B')$ such that $\pi'^*(\eta')=\psi'^*(\omega)$ then setting
\[
\eta=\frac{1}{\deg(\phi)}\mathrm{tr}_\phi(\eta')\in\hpo(B)
\]
we have $\pi^*(\eta)=\psi^*(\omega)$.

Thus it suffices to prove the lemma in the case that $\pi$ has a section $\sigma\cl B\ra Z$, which we now assume. Consider the following two cycles in $Z\times X$:
\[
\Gamma_\psi=\{(z,\psi(z))\in Z\times X\}\text{ and }\Gamma_{\psi\circ\sigma\circ \pi}=\{(z,\psi(\sigma(\pi(z))))\in Z\times X\}.
\]
The assumption that $\psi(Z_b)$ is a Chow constant subvariety implies that the fibers $(\Gamma_\psi)_z$ and $(\Gamma_{\psi\circ\sigma\circ \pi})_z$ are rationally equivalent. By Bloch and Srinivas's result \cite[Prop. 1]{BlochSrinivas}, we can write
\[
\Gamma_\psi = \Gamma_{\psi\circ\sigma\circ\pi} + W\in \CH_*(Z\times X)\otimes \QQ
\]
where $W$ is supported on $D\times X$ for some divisor $D\subset X$. As a consequence the map
\[
(\Gamma_\psi)_*=\psi^*\cl \hpo(X)\ra \hpo(Z)
\]
is a sum of the following maps:
\[
\hpo(X)\xrightarrow{(\psi\circ \sigma)^*}\hpo(B)\xrightarrow{\pi^*}\hpo(X)
\]
and
\[
W_*\cl \hpo(X)\ra \hpo(Z).
\]
The second map must vanish as it factors through the Gysin pushforward of the group $H^{p-1,-1}(D)=0$ (see \cite[Thm. 3.13]{VoisinChowBook} for an elaboration on this point). It follows that the pullback $\psi^*(\omega)$ of any $p$-form on $X$ can be written as the $\pi^*(\eta)$ for some $\eta\in \hpo(B).$
\end{proof}

Now, we present several examples of Chow constant fibrations. There are two main sources of examples: fibrations where the domain has $\CH_0(X)=\ZZ$ and examples which arise as quotients by finite group actions. We think the following is a natural problem:

\begin{problem}
Find new techniques for constructing Chow constant fibrations.
\end{problem}

\begin{example}
If $X$ is a rationally connected variety (or any variety with $\CH_0(X)=\ZZ$) then any fibration $f\cl X\dra Y$ is a Chow constant fibration.
\end{example}

Now we recall an example of Bloch, Kas, and Lieberman \cite{BlochKL}. Those authors were interested specifically in the case of surfaces fibered over a curve. We rephrase their construction in the higher dimensional setting.

\begin{example}\label{ex:bloch}
Let $G=\ZZ/d\ZZ$ and let $Y$ be a smooth projective variety with a $G$-action such that the quotient
$$
\tau\cl Y\ra Z:=Y/G
$$
is smooth and satisfies $\CH_0(Z)=\ZZ$. Bloch, Kas, and Lieberman consider the case when $Y$ is a cyclic cover of $Z=\PP^1$. Another example of interest is when $Y$ is a K3 surface which is either a double cover of $\PP^2$ or the double cover of an Enriques surface.

Let $E'$ be an elliptic curve with a choice of $d$-torsion point $\epsilon\in E'$ so that $G$ acts freely on $E'$ by translation by $\epsilon$. Thus $G\times G$ acts on $Y\times E'$ and we can consider the quotient
$$
\sigma\cl Y\times E'\ra X:=(Y\times E')/G
$$
by the diagonal action of $G$ on $Y\times E'$. Define $E:=E'/G$. There is a map
$$
\pi\cl X\ra E.
$$
Note that $\pi$ is an isotrivial family with all fibers being isomorphic to $Y$.
\end{example}

\begin{proposition}\label{BKLprop}
The map $\pi\cl X \ra E$ is a Chow constant fibration.
\end{proposition}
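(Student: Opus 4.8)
The plan is to reduce to Theorem~\ref{thm:chowconstantfibers}: it suffices to show that some (in fact every) fiber $X_e=\pi^{-1}(e)$ is a Chow constant subvariety of $X$, i.e.\ that all of its points coincide in $\CH_0(X)$. First I would record the elementary geometry. Since $G$ acts freely on $E'$, the diagonal $G$-action on $Y\times E'$ is free, so $\sigma\cl Y\times E'\ra X$ is an \'etale $G$-cover and $X$ is smooth and projective. Writing $t\mapsto t+\epsilon_g$ for the translation by which $g\in G$ acts on $E'$ (so $\epsilon_g\in\langle\epsilon\rangle$ has order dividing $d$), one has $\sigma(gy,t)=\sigma(y,t-\epsilon_g)$. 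Fixing $e'\in E'$ over $e$, the preimage of $e$ in $Y\times E'$ is $\coprod_{g\in G}Y\times\{e'+\epsilon_g\}$, on which $G$ permutes the $d$ sheets simply transitively; hence $\sigma$ restricts to an isomorphism $j_{e'}\cl Y\times\{e'\}\xrightarrow{\ \sim\ }X_e$, and under this identification the induced map $X_e\to Z$ (coming from the $G$-invariant composite $Y\times E'\to Y\xrightarrow{\tau}Z$) is just $\tau$.

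The heart of the argument is then two computations in $\CH_0(X)$. (i) For every $g\in G$ and $y\in Y$, the class $[\sigma(y,e')]-[\sigma(gy,e')]$ is torsion: indeed $\sigma(gy,e')=\sigma(y,e'-\epsilon_g)$, and on the elliptic curve $E'$ the degree-$0$ class $[e']-[e'-\epsilon_g]$ corresponds to the point $\epsilon_g$, whose order divides $d$; pushing this relation forward along $t\mapsto\sigma(y,t)$ gives $d\bigl([\sigma(y,e')]-[\sigma(gy,e')]\bigr)=0$. (ii) For any $y_1,y_2\in Y$, one has $\sum_{g\in G}[\sigma(gy_1,e')]=\sum_{g\in G}[\sigma(gy_2,e')]$: the degree-one points $\tau(y_1),\tau(y_2)$ are rationally equivalent on $Z$ since $\CH_0(Z)=\ZZ$; pulling this equivalence back along the finite flat quotient $\tau$ gives $\tau^*[\tau(y_1)]=\tau^*[\tau(y_2)]$ in $\CH_0(Y)$, and $\tau^*[\tau(y)]=\sum_{g\in G}[gy]$ because the scheme-theoretic fiber of $\tau$ has length equal to the stabilizer order at each point of a $G$-orbit; applying $(j_{e'})_*$ gives the stated identity. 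Combining (i) and (ii) yields $d\,[\sigma(y_1,e')]\equiv d\,[\sigma(y_2,e')]$ modulo torsion, hence $[\sigma(y_1,e')]-[\sigma(y_2,e')]$ is torsion in $\CH_0(X)$ for all $y_1,y_2\in Y$.

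It remains to kill the torsion, which I would do exactly as in the last two paragraphs of the proof of Theorem~\ref{thm:chowconstantfibers}: since all differences of points of the connected variety $X_e$ are torsion in $\CH_0(X)$, their $\alb_X$-images differ by torsion points of $\Alb(X)$, so $\alb_X$ is constant on $X_e$ (the torsion points being countable), and \Roi tman's theorem \cite{RoitmanTorsion} then forces $[\sigma(y_1,e')]=[\sigma(y_2,e')]$. Thus every $X_e$ is a Chow constant subvariety, and Theorem~\ref{thm:chowconstantfibers} finishes the proof. The step requiring the most care is computation (ii): one must pull $0$-cycles back along the (a priori ramified) Galois quotient $\tau\cl Y\ra Z$ and identify $\tau^*[\tau(y)]$ with the multiplicity-weighted orbit cycle $\sum_{g\in G}[gy]$, which is precisely what converts the hypothesis $\CH_0(Z)=\ZZ$ into a genuine relation among the points of the fiber $X_e\cong Y$. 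If one reads ``$\tau\cl Y\ra Z$ is smooth'' as ``$\tau$ is \'etale,'' this identification is immediate.
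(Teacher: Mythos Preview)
Your proof is correct. The ingredients are the same as the paper's (the $d$-torsion coming from translation on $E'$, the hypothesis $\CH_0(Z)=\ZZ$, and \Roi tman's theorem), but the packaging is different and worth noting.

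The paper argues globally: it first computes $H^{1,0}(X)\cong H^{1,0}(E)$ by taking $G$-invariants, so $\pi=\alb_X$; by \Roi tman it then suffices to check that $\pi_*\otimes\QQ$ is an isomorphism. For this it uses the \emph{second} $G$-action on $X$ (coming from $G\times G/G$) and the quotient $q\cl X\to Z\times E$; since $\CH_0(Z\times E)\cong\CH_0(E)$, one is reduced to showing $G$ acts trivially on $\CH_0(X)\otimes\QQ$, which follows from the $E'$-torsion trick. You instead work fiberwise via Theorem~\ref{thm:chowconstantfibers}: your computation (i) is exactly the $E'$-torsion trick, and your computation (ii) replaces the global quotient $q$ by a direct pull--push along $\tau\cl Y\to Z$. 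You never need to identify $\Alb(X)$ explicitly; you only need the general Albanese/\Roi tman step already isolated in Theorem~\ref{thm:chowconstantfibers}. This makes your argument a bit more modular and avoids the Hodge-theoretic computation of $\Alb(X)$, at the cost of redoing that last step rather than quoting $\pi=\alb_X$ outright.

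One small remark on (ii): your identification $\tau^*[\tau(y)]=\sum_{g\in G}[gy]$ is fine even when $G$ does not act freely on $Y$. Since $Y$ is smooth and $Z=Y/G$ is assumed smooth, $\tau$ is finite and (by miracle flatness) flat, so pullback of $0$-cycles is defined; the scheme-theoretic fiber over $\tau(y)$ is the orbit with each point weighted by its stabilizer order, which is exactly $\sum_{g\in G}[gy]$. So there is no hidden assumption that $\tau$ be \'etale.
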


\begin{proof}
This argument is due to Bloch, Kas, and Lieberman (\cite{BlochKL}). First we show that $\pi=\alb_X.$ One can compute:
\[
\honeo(X)\cong \honeo(Y\times E')^G \cong \honeo(Y)^G\oplus \honeo(E')^G\cong \honeo(Z)\oplus \honeo(E).
\]
Now $\honeo(Z)=0$ as $\CH_0(Z)\cong\ZZ$ (by Lemma \ref{lem:chowconstantdescentofforms}). It follows that $\Alb(X)$ is isogenous to $E$. But as $\pi$ has connected fibers we get $\alb_X=\pi$. Thus, as in the proof of Theorem \ref{thm:chowconstantfibers} (i.e. by applying \Roi tman's theorem) it suffices to show that
\[
\pi_*\otimes \QQ \cl \CH_0(X)\otimes\QQ\ra \CH_0(E)\otimes \QQ
\]
is an isomorphism.

Note that there is a $G\cong (G\times G/G)$ action on $X$. Taking the quotient by $G$ we get the following commuting diagram
\[
\begin{tikzcd}
X\arrow[rr,"\pi"]\arrow[dr,swap,"q"]&&E,\\
&Z\times E\arrow[ur,swap,"p"]&
\end{tikzcd}
\]
where $q$ is the quotient map and $p$ is the projection onto $E$. Then we have $\CH_0(Z\times E)\cong\CH_0(E)$ (as $Z$ is a Chow trivial variety), and by averaging: $\CH_0(Z\times E)\otimes \QQ\cong (\CH_0(X)\otimes \QQ)^G$.

So it suffices to show that $\CH_0(X)\otimes \QQ\cong (\CH_0(X)\otimes \QQ)^G$, i.e. we want to show that for any $x\in X$ and any $g\in G$ we have
\[
x=g\cdot x\in \CH_0(X)\otimes \QQ.
\]
As $X=(Y\times E')/G$ there a $G$-equivariant map $E'\ra X$ whose image contains $x\in X$. As the action of $G$ on $E'$ is translation by a $d$-torsion point, the Abel-Jacobi theorem implies that $x=g\cdot x\in \CH_0(E')\otimes \QQ$. Pushing forward to $X$ proves the result.
\end{proof}

\begin{example}
In the above construction we can replace $E'$ with $\PP^1$ and replace translation by a $d$-torsion point with multiplication by a $d$-th root of unity. If we further assume that the quotient $X=(Y\times \PP^1)/G$ is smooth then the same proof as above implies that the map
$$
\pi\cl X\ra \PP^1\cong(\PP^1/G)
$$
is a Chow-constant fibration over $\PP^1$, hence we have $\CH_0(X)\cong\ZZ$. For example, when $Y$ is a K3 surface which double covers an Enriques surface then the quotient $(Y\times \PP^1)/(\ZZ/2\ZZ)$ is smooth so has this property.
\end{example}

Now we prove Proposition~\ref{famsosurfaces}. Suppose that $X$ is a smooth projective threefold, $B$ is a smooth projective curve, and $\pi\cl X\ra B$ is a Chow constant fibration. Let $\xi=\CC(B)$ be the function field of $B$. We show that the property of being a Chow constant fibration has consequences on the group $\CH_0(X_\xi)$. Recall that given a smooth surface $X$ with $h^{2,0}(X)\ne0$ over an uncountable algebraically closedy field of characteristic 0, Mumford showed that $\CH_0(X)$ is not finite dimensional in the following sense:

\begin{definition}\label{MumFinite}
Let $\xi$ be an arbitrary field, let $X$ be a variety over $\xi$ and let $\CH_0(X)_0$ be the 0-cycles of degree $0$. We say $\CH_0(X)$ is \textbf{finite dimensional in the sense of Mumford} if there exists a $d$ such that every 0-cycle of degree 0 is rationally equivalent to a difference of effective $0$-cycle of degree $d$ (i.e. the map of sets:
\begin{center}
$\mathrm{Sym}^d(X)(\xi)\times \mathrm{Sym}^d(X)(\xi)\ra \CH_0(X)_0$\\
$(\sum x_i)\times(\sum y_j) \mapsto (\sum x_i - \sum y_j)$
\end{center}
is surjective). Taking some personal liberties, we say $\CH_0(X)\otimes \QQ$ is \textbf{finite dimensional in the sense of Mumford} if there exists $d>0$ such that the map
\begin{center}
$\mathrm{Sym}^d(X)(\xi)\times \mathrm{Sym}^d(X)(\xi)\times \QQ\ra \CH_0(X)_0\otimes \QQ$\\
$(\sum x_i)\times(\sum y_j)\times \alpha \mapsto (\sum x_i - \sum y_j)\alpha$
\end{center}
is surjective.
\end{definition}

\begin{proof}[Proof of Proposition \ref{famsosurfaces}]
As $\pi\cl X \ra B$ is a Chow constant fibration over a curve, if
$$
i\cl C\hookrightarrow X
$$
is any multisection of $\pi$ (i.e. a curve so that $\pi\circ i\cl C \ra B$ is surjective) then we have $\CH_0(X)$ is supported on $C$. That is, the map
$$
\CH_0(C)\ra \CH_0(X)
$$
is surjective. So we can apply Bloch and Srinivas's result \cite[Prop. 1]{BlochSrinivas} to give a decomposition of the diagonal
\[
\Delta_X = Z_1+Z_2 \in \CH_0(X\times X)\otimes \QQ,
\]
where $Z_1$ is supported on $C\times X$ and $Z_2$ is supported on $X\times D$ for some divisor $D\subset X$.

Now suppose that $\alpha\subset X$ is any irreducible curve, and let $p_1, p_2$ denote projections of $X\times X$ onto each factor. Then we can use the decomposition of diagonal to write
$$
\begin{array}{rcl}
[\alpha]&=&{p_2}_*(p_1^*([\alpha])\cdot \Delta_X)\\
&=&{p_2}_*(p_1^*([\alpha])\cdot(Z_1+Z_2))\in \CH_1(X)\otimes \QQ
\end{array}
$$
By assumption, $Z_1$ is supported on $C\times X$. The pullback of $[\alpha]$ to $\CH_*(C\times X)$ under the composition
$$
C\times X \ra C \xrightarrow{i} X
$$
vanishes as the intersection $[C]\cdot [\alpha]=0$ for dimension reasons (they are both curves in a threefold). Thus we have the intersection $p_1^*([\alpha])\cdot Z_1=0$. Therefore $[\alpha]={p_2}_*(p_1^*[\alpha]\cdot Z_2)$ is supported on $D$, which implies $\CH_1(X)\otimes\QQ$ is supported on the divisor $D$.

So we have shown that the map
$$
\CH_1(D)\otimes \QQ \ra \CH_1(X)\otimes \QQ
$$
is surjective. The localization sequence for Chow groups implies that for any open set $U\subset X$ we have a commutative diagram:
$$
\begin{tikzcd}
\CH_1(D)\otimes\QQ \arrow[r] \arrow[d]& \CH_1(D\cap U)\otimes \QQ\arrow[d]\\
\CH_1(X)\otimes \QQ\arrow[r]&\CH_1(X\cap U)\otimes\QQ
\end{tikzcd}
$$
and moreover, all the maps in the diagrams are surjections.

Finally we use the following expression for $\CH_0(X_\xi)\otimes \QQ$:
$$
\CH_0(X_\xi)\otimes \QQ =\operatorname*{colim}\limits_{\emptyset \ne V\subset B} \left(\CH_1(X\cap \pi^{-1}(V))\otimes \QQ\right),
$$
and likewise
$$
\CH_0(D_\xi)\otimes \QQ =\operatorname*{colim}\limits_{\emptyset \ne V\subset B}\left(\CH_1(D\cap \pi^{-1}(V))\otimes \QQ\right).
$$
(The colimit is taken over nonempty open subsets $V\subset B$.) Thus the map
$$
\CH_0(D_\xi)\otimes \QQ\ra \CH_0(X_\eta)\otimes \QQ
$$
is surjective, i.e. $\CH_0(X_\xi)\otimes \QQ$ is supported on the curve $D_\xi$. By Corollary~\ref{cor:chow0finitedim}, $\CH_0(X_\xi)\otimes \QQ$ is finite dimensional.
\end{proof}

\begin{example}
This gives examples of K3 surfaces $X_\xi$ over function fields of curves such that $\CH_0(X_\xi)$ is finite dimensional. For example, if $Y$ is a K3 surface which double covers $\PP^2$ or an Enriques surface, and we apply the construction of Bloch, Kas, and Lieberman (see Prop.~\ref{BKLprop}) then we get a Chow constant fibration $\pi\cl X\ra B=E.$ (To see other examples where $\CH_0(X)$ is finite dimensional for K3 surfaces over function fields, and related discussion see \cite[\S12.22]{HuyBook}.)
\end{example}

\begin{example}
It is frequently possible to explicitly compute $\CH_0(X_\xi)$. For example, let $X\subset \PP^3\times \PP^1$ be the total space of a pencil of quartics in $\PP^3$ (c.f. Example \ref{ex:pencilofK3s}) and assume that the base locus of the pencil is a smooth complete intersection curve $C$ of type $(4,4)$. Thus $X$ is the blow up of $\PP^3$ at the curve $C$, and the map $p\cl X\ra \PP^1$ is a Chow constant fibration. Let $E\cong C\times \PP^1$ be the exceptional divisor of $q\cl X\ra \PP^3$. Then by \cite[Thm. 2.13]{VoisinSurvey}, we have
\[
\CH_1(X) = \CH_0(C)\oplus \ZZ[\ell]
\]
where $\ell\subset X$ is the preimage of a general line in $\PP^3$. In this family, there is necessarily a fiber $X_0$ such that the quartic surface $X_0$ contains a line (quartics containing a line are an ample divisor in the projective space of quartics). It follows that
\[
\PP^3\setminus X_0 \cong X\setminus (E\cup X_0)
\]
and by the localization sequence we get
\[
\CH_1(X_0)\ra \CH_1(\PP^3)\ra \CH_1(\PP^3\setminus X_0) \cong \CH_1(X\setminus (E\cup X_0))=0.
\]
We also have $E\cap X_0 \cong C$ and $E\setminus C\cong C\times \AA^1$ which gives a diagram of surjections
\[
\begin{tikzcd}
\CH_1(C\times \AA^1)\arrow[r,twoheadrightarrow]\arrow[dr,twoheadrightarrow]& \CH_1(X\setminus X_0)\arrow[d,twoheadrightarrow]\\
&\CH_0(X_\xi)
\end{tikzcd}
\]
Moreover, it is easy to show that in fact $\CH_0(X_\xi)\cong \CH_0(C\times \AA^1)\cong \Pic(C_\xi)\cong \Pic(C).$
\end{example}

\begin{example}
The previous example can be modified to give an example of a K3 surface $X_\xi$ over $\xi =\CC(\PP^1)$ with $\CH_0(X_\xi)\cong \ZZ$. Let $X_0, X_\infty \subset \PP^3$ be two quartics which are both the union of four transverse planes, so that the singularities of $X_0$ and $X_\infty$ do not meet.
\end{example}

\begin{wrapfigure}{r}{-20pt}
\centering
\hspace{5pt}
\begin{tikzpicture}[scale=1]
 
\coordinate (A) at (1.75,.5,0);
\coordinate (B) at (-1.75,.5,0);
\coordinate (C) at (2,0,-3);
\coordinate (D) at (0,4,0);

\coordinate (p1) at (-1,2,0);
\coordinate (p2) at (1,2,0);
\coordinate (p3) at (1,2,-1.5);
\coordinate (p4) at (-.75,2.5,0);
\coordinate (p5) at (.65,.5,0);
\coordinate (p6) at (.75,2.5,0);
\coordinate (p7) at (-.65,.5,0);
\coordinate (p8) at (1.95,0.1,-2.4);
\coordinate (p9) at (1.83,0.34,-.98);
\coordinate (p10) at (.75,2.5,-1.125);
\coordinate (v1) at (-.3,2,.67);
\coordinate (v2) at (1.72,2,-1.1);
\coordinate (i1) at (-.4,2,0);
\coordinate (i2) at (.4,2,0);
\coordinate (i3) at (0,1.4285,0);
\coordinate (j1) at (1,2,-1.1);
\coordinate (j2) at (1,2,-.5);
\coordinate (j3) at (1.29,1.43,-1.07);

\coordinate (newp10) at (.75,2.5,-1.125);

\draw[-,thick,color=myothercolor,fill=myothercolor] (A)--(B)--(D)--(C)--cycle;

\draw[-,thick,color=gray] (A)--(D);

\draw[-,line width=1] (p1)--(p2);
\draw[-,line width=1] (p2)--(p3);
\draw[-,line width=1] (p4)--(i1);
\draw[-,line width=1] (p5)--(i3);
\draw[-,line width=1] (p6)--(p7);
\draw[-,line width=1] (p6)--(p8);
\draw[-,line width=1] (p9)--(j3);
\draw[-,line width=1] (newp10)--(j1);

\draw[fill] (p6) circle [radius=.02];
\draw[fill] (p2) circle [radius=.02];


\draw[fill] (v1) circle [radius=.01];
\draw[fill] (v2) circle [radius=.01];

\draw[-] (i1)--(v1);
\draw[-] (i2)--(v1);
\draw[-] (i3)--(v1);

\draw[-] (j1)--(v2);
\draw[-] (j2)--(v2);
\draw[-] (j3)--(v2);

\draw[fill=mycolor] (i1)--(i2)--(v1);
\draw[fill=mycolor] (i2)--(i3)--(v1);

\draw[fill=mycolor] (j1)--(j2)--(v2);
\draw[fill=mycolor] (j2)--(j3)--(v2);
\end{tikzpicture}
\caption{Some of the lines in the intersection of the degenerate quartic surfaces.}
\end{wrapfigure}
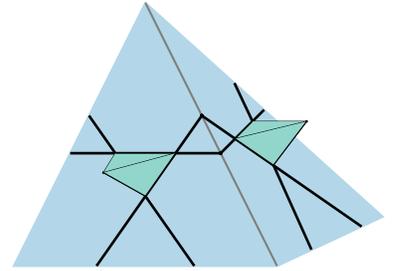
\vspace{-12pt}
Let $X\subset \PP^3\times \PP^1$ be the total space of the pencil spanned by $X_0$ and $X_\infty$ (i.e. the blow up of the intersection $C=X_0\cap X_\infty$). Let $p\cl X\ra \PP^3$ be the blow up map and $E=C\times \PP^1$ be the exceptional divisor. Then, we have $C=\ell_1\cup \dots \cup \ell_{16}$ is the union of 16 lines. The localization sequence again shows that\\
$\hspace{4cm}\CH_1(E)\ra \CH_1(X)$\\
\noindent is surjective, and we can write $\CH_1(E)=\CH_1(C\times \PP^1)\cong \CH_0(C)\oplus \CH_1(C).$ But as $C$ is a connected union of rational curves we have $\CH_0(C)\cong \ZZ$. And the group $\CH_1(C)$ is the kernel of the composition\\
$\hspace{3cm}\CH_1(E)\ra \CH_1(X) \ra \CH_0(X_\xi),$\\
\noindent which shows $\CH_0(X_\xi)\cong\CH_0(C) \ZZ.$\\

\section{Rationalizations of Singularities of Cones}\label{sec:ratcones}

Motivated by Koll\'ar's question, we consider rationalizations of singularities of cones and prove a more general version of Theorem~\ref{intratsings}.

\begin{definition}
Over an algebraically closed field of characteristic zero, a variety $X$ has \textbf{rational singularities} if, for any proper birational morphism $\mu\cl X'\to X$, $R^p\mu_*\Oc_{X'} = 0$ for all $p >0$.
\end{definition}

\noindent Let $X^{\rat}\subset X$ denote the open set where $X$ has rational singularities.

\begin{definition}
We say a proper birational morphism $\mu\cl X'\to X$ is a \textbf{rationalization of singularities} of $X$ if $X'$ has rational singularities.  We say that $\mu$ is a \textbf{strict rationalization of singularities} if $X'$ has rational singularities and $\mu$ gives an isomorphism between $\mu^{-1}(X^{\rat})$ and $X^{\rat}$.
\end{definition}

\noindent Thus Koll\'ar asks whether or not strict rationalizations of singularities exist.

We will study certain rationalizations of singularities of cones. Let $X$ be a smooth variety and let $C(X,L)$ denote the projective cone over $L$ (see \cite[pg. 97]{KollarSMM}). Then $C(X,L)$ has a natural resolution:
\[
\mu\cl \PP(\Oc\oplus L)\ra C(X,L)
\]
given by blowing up the cone point. Thus Koll\'ar's question is trivial for cones (either $C(X,L)$ or $\PP(\Oc\oplus L)$ solves the problem), however following refinement remains interesting:

\begin{problem}
To what extent do there exist minimal rationalizations of singularities?
\end{problem}

\noindent We give a partial answer to this question in the case of cones.

\begin{definition}
We say a birational model $\Rb$ of $C(X,L)$ is an \textbf{intermediate rationalization of singularities of a cone} $C(X,L)$ if $\Rb$ has rational singularities and fits into a diagram
\[
\begin{tikzcd}
\PP(\Oc\oplus L)\arrow[r]\arrow[rr,bend right=40,"\mu"]\arrow[r]&\Rb\arrow[r]&C(X,L)
\end{tikzcd}
\]
\end{definition}



\noindent We recall the criterion for cones to have rational singularities. 

\begin{theorem}{\cite[Prop. 3.13]{KollarSMM}}\label{thm:rationalcones}
Let $X$ be a complex projective variety with rational singularities.  Let $L$ be an ample line bundle on $X$.  The cone $C(X,L)$ has rational singularities if and only if $H^p(X,L^m) = 0$ for all $p >0$ and $m\ge 0$.  
\end{theorem}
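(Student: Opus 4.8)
The plan is to use the natural resolution of the cone and to compute its higher direct images directly in terms of the line bundle $L$. Write $R = \bigoplus_{m \ge 0} H^0(X, L^m)$ for the section ring, so that the affine cone $\Spec R$ is the affine chart of $C(X,L)$ containing the vertex $v$, and set $Y := \PP_X(\Oc_X \oplus L)$ with its blow-down $\mu \colon Y \to C(X,L)$, which contracts the section $E \cong X$ to $v$ and is an isomorphism elsewhere. Over $\Spec R$, the map $\mu$ restricts to the contraction of the zero section of the total space $T := \mathrm{Tot}(L^{-1})$, whose projection $q \colon T \to X$ is affine with $q_* \Oc_T = \bigoplus_{m \ge 0} L^m$ and $H^0(T, \Oc_T) = R$.

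First I would reduce ``$C(X,L)$ has rational singularities'' to two statements about $\mu$: (i) $C(X,L)$ is normal, and (ii) $R^p \mu_* \Oc_Y = 0$ for all $p > 0$. This uses the standard characterization of rational singularities (normality together with the vanishing of higher direct images for one resolution), along with the observation that, although $Y$ need not be smooth when $X$ is singular, it has rational singularities (being a $\PP^1$-bundle over $X$) and --- by resolving $X$, using flat base change along the $\PP^1$-bundle $Y \to X$ and the Leray spectral sequence --- the sheaves $R^p \mu_* \Oc_Y$ coincide with the higher direct images of a genuine resolution $\widetilde{Y} \to C(X,L)$. For (i): since $X$ has rational singularities it is normal, so $T$ is normal; hence $R = H^0(T, \Oc_T)$, being the ring of global functions on a normal variety with fraction field $k(T) = k(\Spec R)$ (the contraction $T \to \Spec R$ is birational because $L$ is ample), is integrally closed, so $\Spec R$, and then $C(X,L)$, is normal.

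For (ii): the sheaves $R^p \mu_* \Oc_Y$ ($p > 0$) are supported at $v$, and over the affine chart $\Spec R$ the Leray spectral sequence for $\mu$ degenerates (since $\Spec R$ is affine and the $R^{>0}\mu_*$ are supported at the point $v$), giving
\[
\Gamma\bigl(\Spec R,\, R^p \mu_* \Oc_Y\bigr) \;\cong\; H^p(T, \Oc_T) \;\cong\; H^p\bigl(X, \textstyle\bigoplus_{m \ge 0} L^m\bigr) \;\cong\; \bigoplus_{m \ge 0} H^p(X, L^m),
\]
where the middle isomorphism uses that $q$ is affine and the last uses that cohomology commutes with direct sums (the sum is finite by Serre vanishing). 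Hence $R^p \mu_* \Oc_Y = 0$ for all $p > 0$ if and only if $H^p(X, L^m) = 0$ for all $p > 0$ and all $m \ge 0$. Combined with (i) and the reduction above, this proves the theorem.

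The point requiring the most care is the reduction step: because $X$ is only assumed to have rational singularities rather than to be smooth, $Y$ is not literally a resolution, and one must verify both that rational singularities are inherited by the $\PP^1$-bundle $Y \to X$ and that $R^\bullet \mu_* \Oc_Y$ is unchanged upon passing to an honest resolution of $X$ (via flat base change). Granting that, the heart of the argument is just the computation of the cohomology of the structure sheaf of the total space of a line bundle, which is elementary.
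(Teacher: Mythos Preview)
The paper does not give its own proof of this statement: it is simply quoted from \cite[Prop.~3.13]{KollarSMM} and used as input for the more general Theorem~\ref{thm:rationalizations}. Your argument is correct and is essentially the standard one --- reduce to showing normality of $C(X,L)$ together with $R^p\mu_*\Oc_Y=0$ for $p>0$, then identify the stalk of $R^p\mu_*\Oc_Y$ at the vertex with $\bigoplus_{m\ge 0} H^p(X,L^m)$ via the affine projection $q\colon T\to X$. The delicate point you flag, that $Y=\PP(\Oc\oplus L)$ is not literally a resolution when $X$ is only assumed to have rational singularities, is handled correctly by resolving $X$ and invoking flat base change along the $\PP^1$-bundle.

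For comparison, the paper's proof of the generalization Theorem~\ref{thm:rationalizations} carries out the analogous computation by the theorem on formal functions, using the graded decomposition $\Oc_{mE}=\Oc_X\oplus L\oplus\cdots\oplus L^{m-1}$ of the thickened exceptional divisor; specializing that argument to the fibration $f\colon X\to \Spec(\CC)$ recovers exactly your identification $R^p\mu_*\Oc_Y\cong\bigoplus_{m\ge 0}H^p(X,L^m)$. So your approach and the paper's (implicit) approach via formal functions are two packagings of the same computation.
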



\noindent The following generalization classifies intermediate rationalizations of singularities.





\begin{theorem}\label{thm:rationalizations}
Let $X$ be a smooth projective variety with an ample line bundle $L$.  There is a bijective correspondence
\[
\left\{ \begin{array}{l}
\text{int. rationalizations}\\
\text{of sings. of $C(X,L)$}\\
\end{array} \right\}\longleftrightarrow \left\{\begin{array}{l}
\text{regular and cohom. trivial fibrations $f\cl X\ra Y$, such that}\\
\text{$Y$ has rational sings. and $R^pf_*(L^m) = 0$ for $p,m > 0$}  
\end{array}  \right\}.
\]
\end{theorem}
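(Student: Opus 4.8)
The plan is to construct the correspondence in both directions and verify they are mutually inverse, using Kollár's theorem (Theorem~\ref{thm:kollar}) and the criterion for cones to have rational singularities (Theorem~\ref{thm:rationalcones}) as the main engines. First I would set up the geometry: the resolution $\mu\cl\PP(\Oc\oplus L)\to C(X,L)$ has exceptional divisor the zero section $X_0\cong X$, and contracting it in a different way corresponds to choosing a fibration of $X$. More precisely, given a regular fibration $f\cl X\to Y$, I would form the relative cone construction over $Y$: push the $\PP^1$-bundle $\PP(\Oc\oplus L)$ forward via $f$ on the base, producing a birational model $\Rb$ of $C(X,L)$ whose exceptional fiber over the cone point has been replaced by $Y$ (this is the variety obtained by a flip-type modification, or concretely as $\mathrm{Proj}$ of the Rees-type algebra $\bigoplus_m f_*(L^m)$ over the cone point — I would make this precise using that $C(X,L) = \mathrm{Spec}\bigoplus_m H^0(X,L^m)$ with its grading). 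The map $\PP(\Oc\oplus L)\to\Rb$ contracts $X_0\cong X$ along $f$ onto a copy of $Y$ sitting over the cone point, and $\Rb\to C(X,L)$ then contracts that $Y$ to the point.

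Next I would identify exactly when this $\Rb$ has rational singularities. Away from (the preimage of) the cone point, $\Rb$ agrees with $\PP(\Oc\oplus L)$ minus the zero section, which is smooth, so the only issue is along the copy of $Y$ over the cone point and at points of $Y$ itself. I would apply Kollár's theorem (Theorem~\ref{thm:kollar}) to the composite $\PP(\Oc\oplus L)\to\Rb$: its relevant fibers are the fibers of $f$, together with the cone directions. Chasing through, $\Rb$ has rational singularities if and only if (i) $Y$ has rational singularities, (ii) the general fiber $F$ of $f$ satisfies $h^p(F,\Oc_F)=0$ for all $p>0$ — i.e. $f$ is cohomologically trivial, invoking Corollary~\ref{CohTrivConst} and the identification $H^p(F,\Oc_F)\cong H^{p,0}(\Fbar)$ — and (iii) a relative vanishing $R^pf_*(L^m)=0$ for $p,m>0$, which is what controls the higher direct images of $\Oc$ of the relative cone over $Y$ (here one also needs $R^pf_*\Oc_X = 0$ for $p>0$, which follows from (i) and (ii) by Theorem~\ref{thm:kollar} applied to $f$ itself, so this can be absorbed). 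This is the step I expect to be the main obstacle: bookkeeping the higher direct images of the structure sheaf of the relative cone $\Rb\to Y$ in terms of $R^pf_*(L^m)$, since one must handle the grading coming from the cone direction carefully and check that no vanishing is lost at the vertex. A clean way is to resolve $\Rb$ by $\PP(\Oc_Y\oplus f_*L\oplus\cdots)$-type ambient spaces or to compute on $\PP(\Oc\oplus L)$ directly using $R\mu_*$ and the projection formula along the $\PP^1$-bundle.

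Finally I would run the reverse construction: given an intermediate rationalization $\Rb$, the factorization $\PP(\Oc\oplus L)\to\Rb\to C(X,L)$ means the zero section $X_0$ is not contracted by $\mu$ is false — rather $X_0$ maps to the locus in $\Rb$ lying over the cone point. Restricting the morphism $\PP(\Oc\oplus L)\to\Rb$ to $X_0\cong X$ gives a morphism $f\cl X\to Y$ where $Y\subset\Rb$ is the image; taking Stein factorization and using that a general fiber of $p\cl\Gamma\to X$ for an almost holomorphic map is rationally chain connected (Lemma~\ref{ruledness}) — or more simply, that $\PP(\Oc\oplus L)\dashrightarrow Y$ restricted to $X_0$ is already a morphism — one checks $f$ is a fibration, and then the rational-singularities hypothesis on $\Rb$ feeds back through Theorem~\ref{thm:kollar} to force $Y$ to have rational singularities, $f$ cohomologically trivial, and $R^pf_*(L^m)=0$ for $p,m>0$. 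Both constructions are inverse to each other because a birational model dominated by $\PP(\Oc\oplus L)$ and dominating $C(X,L)$ is determined by which curves in $\PP(\Oc\oplus L)$ it contracts, and these are precisely the fibers of $f$ in $X_0$ together with the strict transforms of cone rulings over each fiber; uniqueness of such a relative-cone model then closes the argument. Theorem~\ref{intratsings} follows by taking $L$ sufficiently positive so that $R^pf_*(L^m)=0$ for $p,m>0$ becomes automatic (by relative Serre vanishing applied to $f$, once $f$ is a morphism from a fixed $X$), removing that clause from the right-hand side.
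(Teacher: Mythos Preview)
Your overall architecture matches the paper's: construct $\Rb$ from $f$ and conversely, check the rationality of $\Rb$ is governed by the vanishing of $R^pf_*(L^m)$, and observe the two constructions invert each other. There are two points where your plan diverges from what actually makes the argument work.

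First, your intended use of Koll\'ar's theorem for the map $h\cl \PP(\Oc\oplus L)\to \Rb$ does not yield anything. That map is \emph{birational}; its geometric generic fiber is a point, so Theorem~\ref{thm:kollar} reduces to the tautology ``$\Rb$ has rational singularities iff $R^ph_*\Oc=0$'' and tells you nothing about $f$. The computation the paper actually performs is local along $Y\subset\Rb$ via the \emph{theorem on formal functions}. The exceptional divisor $E\cong X$ satisfies $\Oc_{mE}\cong \Oc_X\oplus L\oplus\cdots\oplus L^{m-1}$ as a graded $\Oc_X$-module (via the bundle projection $\pi$), and formal functions gives
\[
\widehat{R^ph_*\Oc_{\PP(\Oc\oplus L)}}\big|_Y \;\cong\; \bigoplus_{m\ge 0} R^pf_*(L^m).
\]
This one line is the whole engine: it converts ``$\Rb$ has rational singularities'' into the vanishing $R^pf_*(L^m)=0$ for all $p>0$, $m\ge 0$, and Koll\'ar's theorem is then only used, applied to $f\cl X\to Y$, to unpack the $m=0$ case into ``$Y$ has rational singularities and $f$ is cohomologically trivial.'' You correctly flagged the bookkeeping of higher direct images as the obstacle, but you did not name the tool that dispatches it.

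Second, your construction of $\Rb$ from $f$ (a Proj of $\bigoplus_m f_*(L^m)$, or a relative cone over $Y$) is plausible but left imprecise. The paper's construction is cleaner and avoids having to check anything about that graded algebra: define $\Rb$ as the normalization of the image of
\[
(\mu,\,f\circ\pi)\cl \PP(\Oc\oplus L)\;\longrightarrow\; C(X,L)\times Y.
\]
Then $h(E)=Y$ is automatic and the same formal-functions computation runs verbatim to show $\Rb$ has rational singularities. The mutual inverseness is then immediate.
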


\begin{remark}\label{KnefOK}
If $L$ is sufficiently positive (e.g. if $\omega^{-1}\otimes L$ is also ample) then the vanishing of $R^pf_*(L^m)$ for $p, m>0$ is automatic. Thus Theorem~\ref{thm:rationalizations} implies Theorem~\ref{intratsings}. Note that $L$ is always ``sufficiently positive" if $-K_X$ is nef.
\end{remark}

\begin{remark}
If $\hpo(X)= 0$ for all $p > 0$, then Theorem~\ref{thm:rationalizations} implies Theorem~\ref{thm:rationalcones} (at least in the case $X$ is smooth).
\end{remark}

\begin{proof}
By \cite[Thm. 7.1]{KollarHigherDirect} (or see Theorem~\ref{thm:kollar} and Corollary~\ref{CohTrivConst}), given a smooth projective variety $X$ and a regular fibration  $f: X \to Y$ with $Y$ normal, the following are equivalent: 
\begin{enumerate}
    \item $R^pf_*\Oc_X = 0$ for $p >0$ 
    \item $Y$ has rational singularities and $f$ is a cohomologically trivial fibration.
\end{enumerate}
Thus the conditions on the right hand side of the theorem can be rephrased as regular fibrations $f\cl X\ra Y$ such that $R^pf_*L^m=0$ for all $p>0$ and all $m\ge 0$.

Start with an intermediate rationalization of singularities:
\[
\begin{tikzcd}
\PP(\Oc\oplus L)\arrow[r]\arrow[rr,bend right=40,"\mu"]\arrow[r,"h"]&\Rb\arrow[r,"g"]&C(X,L)
\end{tikzcd}
\]
Note that the exceptional divisor $E$ of $\mu$ is isomorphic to $X$. Define $Y:=h(E)$ to be the image of $E$ in $\Rb$. We call the induced map $f\cl X\ra Y$. We want to show that $R^pf_*L^m=0$ for all $p>0$ and $m\ge 0$. Note that the thickening $mE$ admits a map to $X$ (the projection $mE\subset \PP(\Oc\oplus L)\xrightarrow{\pi}X$) which makes $\Oc_{mE}$ into a graded $\Oc_X$-algebra, and we may write
\[
\Oc_{mE}=\Oc_X \oplus L \oplus\cdots \oplus L^{m-1},
\]
as a graded $\Oc_X$-module. By the theorem on formal functions,
\[
\widehat{R^ph_*\left(\Oc_{\PP(\Oc\oplus L)}\right)_Y} = \oplus_{m\ge 0} R^p f_* (L^m).
\]
The assumption that $R$ has rational singularities implies the left hand side vanishes. Therefore $R^pf_*L^m=0$ for all $p> 0$ and all $m\ge 0$.

In the other direction, start with a cohomologically trivial fibration such that $R^pf_*L^m=0$ for all $p>0$ and $m\ge 0$. We need to construct an intermediate rationalization. Let $\pi\cl \PP(\Oc\oplus L)\ra X$ denote the projection onto $X$. Define $\Rb=\Rb_{X,L,f}$ to be the normalization of the image of the map
\[
\phi=(\mu,f\circ \pi)\cl \PP(\Oc\oplus L)\ra C(X,L)\times Y.
\]
Let $h\cl \PP(\Oc\oplus L)\ra R$ and $g\cl \Rb \ra C(X,L)$ denote the induced maps. Clearly, the sheaf $R^ph_*(\Oc_{\PP(\Oc\oplus L)})$ is supported on a thickening of $h(E)\subset \Rb$. By applying the theorem on formal functions in the same way as above, we get that $\Rb$ has rational singularities and thus defines an intermediate rationalization of singularities of $C(X,L)$. Showing that these constructions are compatible is straightforward.
\end{proof}

\begin{example}
We give an example of a cone with infinitely many intermediate rationalizations. Consider a K3 surface $X$ with infinitely many (-2)-curves. Each curve $C$ is contractible and the contraction defines a map $f_C: X \to Y$, where $Y$ has a single canonical (and thus rational) singularity. As $K_X=0$ is nef, Remark~\ref{KnefOK} implies that any ample line bundle $L$ on $X$ is ``sufficiently positive" in the sense of Theorem~\ref{intratsings}. Thus by Theorem~\ref{intratsings}, there are infinitely many (non $\QQ$-factorial) intermediate rationalizations of singularities of $C(X,L)$. And in fact, there cannot exist a ``minimal" one. (It is maybe worth noting that although there are infinitely many (-2)-curves, by \cite[Thm. 0.1(b)]{Sterk} these (-2)-curves have only finitely many orbits under the automorphism group $\mathrm{Aut}(X)$.)
\end{example}

\section{Maximal Chow constant and Chow trivial fibrations}\label{sec:maxchowconst}

In this section we show that maximal Chow constant fibrations and maximal Chow trivial fibrations exist. One of the key points is that Chow constant fibrations are the fibrations whose fibers are Chow constant subvarieties (see Theorem~\ref{thm:chowconstantfibers}). The existence of maximal Chow constant fibrations is in some sense due to \Roi tman \cite[Lemma 2]{RoitmanMCC}. The construction is quite general and seemingly well known to experts and it is possible there is a more original reference. We start by recalling \Roi tman's construction. Moreover, we give criteria for the nontriviality of these maximal fibrations.

Let $X$ be a smooth complex projective variety. Let $\Chow(X)$ denote the Chow variety which parameterizes cycles in $X$. Let $W\subset X\times X$ be an equivalence relation which is a countable union of closed irreducible subsets $W=\cup_{i\in \NN} W_i$ (assume no factors are repeated). \Roi tman constructs a maximal quotient $\eta\cl X\dra X/W$ with $W$-equivalent fibers.

\begin{proposition}[{\cite[Lem. 2]{RoitmanMCC}}]\label{prop:maximalWfibration}
Let $X$ and $W$ be as above.
\begin{enumerate}
\item There is a unique maximal and irreducible component $W_0\subset W$ which contains the diagonal $\Delta_X\subset X\times X$.
\item $W_0$ induces a rational map:
\[
\eta\cl X\dra \Chow(X),
\]
to the Chow variety of $X$, and a general fiber of $\eta$ is irreducible. (Thus if we define $Y$ to be a resolution of singularities of the closure of the image of $\eta$ then
\[
\eta\cl X\dra Y
\]
is a fibration.)
\item The fibration $\eta$ is uniquely maximal in the following sense, if $\phi\cl X \dra Z$ is another fibration then the fibers of $\phi$ are equivalent under the relation $W\iff\eta$ factors through $\phi$.
\end{enumerate}
\end{proposition}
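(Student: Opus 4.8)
The plan is to adapt \Roi tman's argument \cite[Lem.~2]{RoitmanMCC}. Everything will rest on one geometric lemma, which I would prove first: \emph{if $V\subseteq X\times X$ is irreducible and contains the diagonal $\Delta_X$, then the general fibre of the first projection $p_1\cl V\to X$ is irreducible.} To see this, note $p_1$ is dominant (the diagonal is a section of it) and form the Stein factorization $V\xrightarrow{\rho}\widetilde X\xrightarrow{\nu}X$. Composing the diagonal section $X\to V$ with $\rho$ gives a section of the finite map $\nu$ whose image is dense in $\widetilde X$ (it has dimension $\dim X=\dim\widetilde X$); working in characteristic $0$, $\nu$ is generically \'etale, so a dense section forces $\deg\nu=1$, and then $\nu$ is an isomorphism since $X$ is smooth. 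Hence the general fibre of $p_1$ equals a general fibre of $\rho$, which is connected; by generic smoothness it is also smooth, hence irreducible and reduced.

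Granting this, part (1) becomes a composition argument. Call a component $W_i$ \emph{admissible} if $\Delta_X\subseteq W_i$; it then dominates $X$ and $\dim W_i\le 2\dim X$. For admissible $W_i,W_j$ form the correspondence product $W_i\times_X W_j=\{(a,b)\in W_i\times W_j : p_2(a)=p_1(b)\}$: projecting to the first factor, the fibre over a general $a=(x,y)\in W_i$ is $\{y\}\times(W_j)_y$, irreducible by the lemma, so $W_i\times_X W_j$ is irreducible, and therefore so is its image $W_i\circ W_j\subseteq X\times X$ and the closure $\overline{W_i\circ W_j}$. This closure contains $W_i\cup W_j\cup\Delta_X$ and, because $W$ is an equivalence relation, is contained in $W=\bigcup_k W_k$; since a complex variety is not a countable union of proper closed subvarieties, a dense open of $\overline{W_i\circ W_j}$ (and hence all of it) lies in a single $W_k$. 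Thus $W_k$ is admissible and contains $W_i$ and $W_j$. Now pick an admissible $W_0$ of maximal dimension: for any admissible $W_j$, both $W_0$ and $W_j$ lie in a common admissible $W_k$ with $\dim W_k\ge\dim W_0$, forcing $W_k=W_0\supseteq W_j$; so $W_0$ is unique and contains every admissible component. Applying the same argument with $i=j=0$, together with the fact that the coordinate flip permutes admissible components, shows $W_0$ is symmetric and that $\overline{W_0\circ W_0}$ is an admissible irreducible subvariety containing $W_0$, hence equal to $W_0$; so $W_0$ is also (generically) transitive.

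For part (2), set $(W_0)_x:=\{x' : (x,x')\in W_0\}$ and let $\eta\cl X\dra\Chow(X)$ send $x$ to the cycle $[(W_0)_x]$, using the fibre cycles of $p_1\cl W_0\to X$ over the dense open where $p_1$ is flat; this is a rational map. Over a dense open $U\subseteq X$ the relation cut out by $W_0$ is reflexive, symmetric and transitive by the previous paragraph, and one checks that for general $x$ the fibre $\eta^{-1}(\eta(x))$ is exactly $(W_0)_x$: ``$\subseteq$'' uses reflexivity ($x'\in(W_0)_{x'}=(W_0)_x$) and ``$\supseteq$'' uses symmetry and transitivity to get $(W_0)_{x'}=(W_0)_x$ for $x'\in(W_0)_x$, the two sides agreeing as cycles since general fibres of $p_1$ are reduced. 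By the lemma $(W_0)_x$ is irreducible for general $x$, so taking $Y$ to be a resolution of singularities of the closure of the image of $\eta$ makes $\eta\cl X\dra Y$ a fibration.

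Part (3): if $\eta=\psi\circ\phi$ then the general fibre of $\phi$ lies in a fibre of $\eta$, hence in a $W$-class, so $\phi$ has $W$-equivalent fibres. Conversely, suppose $\phi\cl X\dra Z$ has $W$-equivalent fibres; let $W_\phi$ be the unique component of the closure of $\{(x,x') : \phi(x)=\phi(x')\}$ dominating $X$ under $p_1$ (unique because the general fibre of $\phi$ is irreducible). Then $\Delta_X\subseteq W_\phi$, $W_\phi$ is irreducible, and its general point is a pair of points in a common fibre of $\phi$, hence lies in $W$; so $W_\phi\subseteq W_k$ for some admissible $W_k$, whence $W_\phi\subseteq W_0$. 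Thus the general fibre $(W_\phi)_x$ of $\phi$ is contained in $(W_0)_x$, the general fibre of $\eta$, so $\eta$'s fibres are unions of $\phi$'s and $\eta$ factors through $\phi$. I expect the opening lemma --- and more generally the bookkeeping that keeps every correspondence in sight irreducible, so that the ``not a countable union of proper subvarieties'' principle applies cleanly --- to be the main obstacle; once it is in place, parts (1)--(3) are essentially formal.
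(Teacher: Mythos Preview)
Your opening lemma is false as stated, and the gap propagates through the rest of the argument. Here is a counterexample: let $X=\PP^2$, fix a smooth conic $C\subset\PP^2$, and set
\[
V=\{(x,y)\in\PP^2\times\PP^2 : x\text{ and }y\text{ lie on a common tangent line to }C\}.
\]
Then $V$ is the image of the irreducible incidence variety $\{(x,\ell,y):\ell\text{ tangent to }C,\ x,y\in\ell\}$, hence irreducible; it contains $\Delta_X$ because any tangent line through $x$ contains $x$; yet for general $x$ the fibre $V_x$ is the union of the two tangent lines from $x$ to $C$, which is connected but reducible. Your Stein--factorization argument correctly shows the general fibre is \emph{connected}, but the appeal to generic smoothness is illegitimate because $V$ need not be smooth (in the example $V$ is singular along $\Delta_X$, exactly at the nodes of the fibres), and passing to a resolution of $V$ disconnects the general fibre.

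You invoke the lemma for every admissible $W_j$ (to conclude that $W_i\times_X W_j$ is irreducible) and again for $W_0$ in part~(2), so the argument does not go through. The paper's approach is genuinely different: instead of a general fibre--irreducibility lemma, it fixes an admissible component $W_0$ of maximal dimension and uses the uncountability of $\CC$ to pick a very general $(z,x)\in W_0$ lying in no other $W_i$; transitivity of $W$ then forces $(W_1)_x\subset(W_0)_z$ for any competing admissible $W_1$, and a flat--limit argument as $x\to z$ gives $(W_1)_z\subset(W_0)_z$, hence $W_1=W_0$. The same very--general--point--plus--transitivity mechanism is what gives irreducibility of $(W_0)_z$; note that this genuinely uses maximality of $W_0$ among admissible components, not merely $\Delta_X\subset W_0$. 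Your composition--of--correspondences strategy is natural, but to salvage it you would have to replace the false lemma by an argument that brings in transitivity of $W$ and maximality of the component under consideration.
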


\begin{definition}
We call the map $\eta\cl X\dra Y$ in the previous proposition the \textbf{maximal W-constant fibration}. When $W\subset X\times X$ is the equivalence relation defined by equivalence of points in $\CH_0(X)$ we say $Y$ is the \textbf{maximal Chow constant fibration}. Thus Proposition~\ref{prop:maximalWfibration} implies Theorem~\ref{MaxConstFibs} for Chow constant fibrations.
\end{definition}

\begin{proof}[Sketch of Proof] Throughout, for a subvariety $W'\subset X\times X$ we use
\[
W'_z:=W'\cap (z\times X)\subset X
\]
to denote the fiber of $W'$ over $z\in X$ under the first projection. First, we remark that if $z\in X$ is general and $W'\subset X\times X$ is irreducible and contains the diagonal then every component of $W'_z$ contains the diagonal point $(z,z)\in \Delta_X$.

To prove (1), assume that there are two maximal components $W_0, W_1\subset W$ which contain the diagonal. The idea is to use the transitivity of $W$. We make the following assertion, which is a standard application of the Baire category theorem:
\begin{center}
$(\star)$ Maximality of $W_0$ along with the uncountability of $\CC$ guarantees\\
that a very general point $(x_1,x_2)\in W_0$ satisfies $(x_1,x_2)\not\in \bigcup\limits_{i\in \NN i\ne 0} W_i.$
\end{center}
\noindent Let $z\in X$ be very general and let $(z,x)\in W_0$ be a very general point in $(W_0)_z$. By transitivity, $z\times (W_1)_x\subset W$ and contains the very general point $(z,x)\in z\times W_z$. It follows from $(\star)$ that $(W_1)_x\subset (W_0)_z$. Taking the limit as $x$ approaches $z$ shows $(W_1)_z\subset (W_0)_z.$ (This uses that $z$ is very general, so the projection of $W_1$ onto the first factor is flat in a neighborhood of $z$.) As $z$ is very general and $W_0$ and $W_1$ are maximal, we have $W_0=W_1$.

So let $W_0\subset W$ be the unique, maximal irreducible component which contains $\Delta_X$. Clearly $W_0\subset X\times X$ is a reflexive subset. A similar argument to the previous paragraph implies that for $z$ general $(W_0)_z$ is irreducible, and it also shows that if $(z,x)\in (W_0)_z$ is general, then $(W_0)_x= (W_0)_z\subset X$. Thus we have shown (2) and define the maximal $W$-constant fibration to be the map
\begin{center}
$\eta\cl X \dra \Chow(X)$\\
sending $x\mapsto \eta(x):=[(W_0)_x].$
\end{center}
By reflexivity, for a general point $x\in X$, the closure of the fibers of $\eta$ at $x$ is $(W_0)_x$.

The universal property (3) follows from the fact that pairs of points in a general fiber of $\phi$ gives rise to an irreducible component of $W$ which contains $\Delta_X$. Unique maximality of $W_0$ then implies that $\eta$ factors through $\phi$.
\end{proof}

\begin{theorem}
Let $\eta\cl X\dra Y$ be the maximal Chow constant fibration of a smooth $n$-dimensional projective variety $X$. The following are equivalent.
\begin{enumerate}
\item $\dim(Y)\le d$.
\item $\CH_0(X)$ is supported on a variety of dimension $d$.
\item $\CH_0(X)$ is supported on a smooth irreducible variety of dimension $d$.
\item For every point $x\in X$ there is a dimension $n-d$ subvariety $V\subset X$ such that every point $x'\in V$ satisfies $x=x'\in \CH_0(X)$.
\end{enumerate}
\end{theorem}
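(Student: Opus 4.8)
The plan is to establish the cycle of implications $(3)\Rightarrow(2)\Rightarrow(1)\Rightarrow(4)\Rightarrow(3)$, using throughout the description of $\eta$ coming from \Roi tman's construction (Proposition~\ref{prop:maximalWfibration}), the characterization of Chow constant fibrations by their fibers (Theorem~\ref{thm:chowconstantfibers}), the almost holomorphicity of $\eta$ (Theorem~\ref{MaxConstFibs}), and Bloch--Srinivas's decomposition of the diagonal. The implication $(3)\Rightarrow(2)$ is immediate.

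For $(1)\Rightarrow(4)$, write $W_0\subset X\times X$ for \Roi tman's maximal component. Since $W_0$ is contained in the rational-equivalence relation $W$, for every $x\in X$ the fiber $(W_0)_x\subset X$ consists of points rationally equivalent to $x$; and since the first projection $W_0\to X$ is surjective with general fiber of dimension $n-\dim Y$, upper semicontinuity of fiber dimension gives $\dim(W_0)_x\geq n-\dim Y\geq n-d$ for \emph{every} $x$. Intersecting a component of $(W_0)_x$ of dimension $\geq n-d$ with general hyperplane sections of $X$ yields a subvariety $V$ of dimension $n-d$, all of whose points equal $x$ in $\CH_0(X)$.

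For $(4)\Rightarrow(3)$, I first convert the pointwise data of (4) into an algebraic family. Each $\{x\}\times V_x$ is an irreducible subvariety of $W=\bigcup_iW_i$, hence lies in some $W_{i(x)}$; therefore the closed sets $\{x:\dim(W_i)_x\geq n-d\}$ cover the irreducible variety $X$, so by Baire category over the uncountable field $\CC$ (exactly as in the proof of Proposition~\ref{prop:maximalWfibration}) one of them equals $X$. This produces a subvariety of $W$ dominating $X$ with all fibers of dimension $\geq n-d$; composing this correspondence with its transpose (the result stays inside $W$, by transitivity of rational equivalence, and contains the diagonal) and invoking the maximality of $W_0$ yields $\dim(W_0)_x\geq n-d$, i.e.\ $\dim Y\leq d$. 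Finally, if $\etabar\colon\Xbar\to Y$ is a resolution of $\eta$ and $Z\subset\Xbar$ is a general complete-intersection multisection of $\etabar$, then $Z$ is smooth and irreducible of dimension $\dim Y$; because the fibers of $\etabar$ are Chow constant subvarieties, every $0$-cycle on $\Xbar$ may be moved into the proper locus and then replaced point-by-point by rationally equivalent points of $Z$, so $\CH_0(Z)\twoheadrightarrow\CH_0(\Xbar)=\CH_0(X)$; enlarging $Z$ inside $\Xbar$ to dimension exactly $d$ gives (3). The algebraicity and transitivity bookkeeping is the delicate point of this implication.

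The crux is $(2)\Rightarrow(1)$. Resolve $\eta$ to a morphism $\etabar\colon\Xbar\to Y$ and let $\Vbar\subset\Xbar$ be (the strict transform of) a resolution of $V$, so $\dim\Vbar=d$ and $\CH_0(\Vbar)\twoheadrightarrow\CH_0(\Xbar)=\CH_0(X)$; it suffices to prove $\etabar|_{\Vbar}\colon\Vbar\to Y$ is dominant, since then $\dim Y\leq\dim\Vbar=d$. If it were not dominant, a general fiber $F$ of $\etabar$ --- a Chow constant subvariety by Theorem~\ref{thm:chowconstantfibers} --- would be disjoint from $\Vbar$; writing a point of $F$ as the class of a $0$-cycle supported on $\Vbar$ and pushing forward by $\etabar$ would show $\CH_0(Y)$ is supported on the proper closed subvariety $\overline{\etabar(\Vbar)}\subsetneq Y$. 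But the universal property of Theorem~\ref{MaxConstFibs} together with the composition law (Corollary~\ref{cor:compositionofchowfib}) forces $Y$ to be its own maximal Chow constant fibration, so it remains to show that a variety equal to its own maximal Chow constant fibration cannot have $\CH_0$ supported on a proper closed subvariety of strictly smaller dimension. I would prove this by feeding the lower-dimensional support into the Bloch--Srinivas decomposition of the diagonal and inducting on dimension, the base case being that $\CH_0(X)=\ZZ$ forces $Y$ to be a point (contained in Lemma~\ref{lem:chowconstantdescentofforms}, with \Roi tman's theorem controlling torsion as in Theorem~\ref{thm:chowconstantfibers}). I expect this final reduction --- reconciling the support of $\CH_0$ with the maximal Chow constant fibration in the presence of torsion --- to be the main obstacle of the whole argument.
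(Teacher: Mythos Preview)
Your cycle of implications and your argument for $(1)\Rightarrow(4)$ match the paper's. The differences lie in $(4)\Rightarrow(3)$ and $(2)\Rightarrow(1)$, where the paper's arguments are far more direct.

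For $(4)\Rightarrow(3)$ the paper simply lets $Z\subset X$ be a general complete intersection of $n-d$ ample divisors. Then $Z$ is smooth, irreducible, of dimension $d$, and meets every $(n-d)$-dimensional subvariety of $X$; since by (4) every point $x$ lies on an $(n-d)$-dimensional Chow constant subvariety $V_x$, we get $x$ rationally equivalent to a point of $V_x\cap Z\subset Z$. That is all: there is no need to first establish (1), to build an algebraic family via Baire category, or to take a multisection of a resolution of $\eta$.

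For $(2)\Rightarrow(1)$ the paper avoids your detour through $Y$, induction, and Bloch--Srinivas entirely. It works directly inside $X$: set $W_V:=\{(v,x)\in V\times X\mid v=x\text{ in }\CH_0(X)\}$, a countable union of subvarieties. Because $\CH_0(X)$ is supported on $V$, some component $W_1\subset W_V$ dominates $X$ under the second projection $p_2$. For each $v\in V$ the image $p_2((W_1)_v)$ is a Chow constant subvariety, and since these images cover $X$ as $v$ ranges over the $d$-dimensional $V$, the one through a general point has dimension at least $n-d$. Hence the unique maximal diagonal component $W_0$ has fibers of dimension $\ge n-d$, i.e.\ $\dim Y\le d$. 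Your proposed route, by contrast, reduces to showing that a variety which is its own maximal Chow constant fibration cannot have $\CH_0$ supported in lower dimension; but that statement is exactly $(2)\Rightarrow(1)$ for $Y$, and to invoke it inductively you would need $\dim Y<n$, which is precisely what you are trying to prove. The Bloch--Srinivas step you gesture at does not obviously extract a nontrivial Chow constant fibration from a decomposition of the diagonal, so as written the argument has a genuine gap that the paper's elementary counting avoids.
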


\begin{proof}
For (1)$\implies$ (4) let $\Gamma_\eta\subset X\times Y$ be the closure of the graph of $\eta$. Let $y$ be a point in the image of $(\Gamma_\eta)_x$. Then $(\Gamma_\eta)_y\subset X$ consists of points rationally equivalent to $x\in X$ and has dimension at least $n-d$. For (4)$\implies$(3), take a general complete intersection of ample divisors on $X$. (3)$\implies$(2) is clear.

What remains is (2)$\implies$(1). Suppose that $\CH_0(X)$ is supported on a union of subvarieties $V\subset X$ such that $\dim(V)=d$. Let $W\subset X\times X$ correspond to equivalence in $\CH_0(X)$. Take the preimage of $W_V$ in $V\times X$, i.e. define
\[
W_V:=\{(v,x)\in V\times X \mid v=x\in \CH_0(X)\}.
\]
$W_V$ is a countable union of subvarieties, and as $\CH_0(X)$ is supported on $V$, we have that there is a component of $W_1\subset W_V$ such that the projection
\[
p_2\cl V\times X \ra X
\]
maps $W_1$ surjectively onto $X$. For a point $v\in V$, $p_2((W_1)_v)$ consists of points which are rationally equivalent. Surjectivity of $p_2|_{W_1}$ implies that through a general point $x\in X$ there is a Chow constant subvariety $Z_x\subset X$ containing $x$ and satisfying $\dim(Z_x)\ge n-d$. Therefore, the maximal component $W_0\subset W$ containing the diagonal has fiber dimension $\dim((W_0)_x)\ge n-d$. Thus by construction of $Y$, $\dim(Y)\le d$.
\end{proof}

To obtain the maximal Chow trivial fibration is not much more difficult. It will be necessary to construct a relative Chow constant fibration. Let $\pi\cl X\ra Z$ be a regular fibration of projective varieties. Assume that $X$ is smooth. Consider the equivalence relation $W(\pi)\subset X\times X$ defined by:
\[
W(\pi):=\{ (x_1,x_2)\in X|\pi(x_1)=\pi(x_2)=z\text{ and }x_1=x_2 \in \CH_0(X_z)\}.
\]
Then we have $W(\pi)=\bigcup\limits_{i\in\NN} W(\pi)_i$ is a countable union of closed subsets. Let $W(\pi)_0$ be the unique maximal component containing $\Delta_X$. 

\begin{lemma}\label{lem:relativemaxWfibration}
\begin{enumerate}
\item Let $Y:=(X/W(\pi))$ be the maximal $W(\pi)$-constant fibration. There is a commutative diagram:
\[
\begin{tikzcd}
X\arrow[dr,swap,"\pi"]\arrow[rr,dashed,"\eta_{\pi}"]&&Y:=(X/W(\pi)).\arrow[dl,dashed]\\
&Z&
\end{tikzcd}
\]
\item If $z\in Z$ is very general, then $(W(\pi)_0)_z=(W(\pi)_z)_0$, i.e. for very general $z\in Z$ the map
\[
\eta_\pi|_{X_z}\cl X_z\dra Y_z
\]
is equivalent to the maximal Chow constant fibration of $X_z$.
\end{enumerate}
\end{lemma}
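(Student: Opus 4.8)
The plan is to prove the two parts separately, part (1) being essentially formal and part (2) resting on a spreading‑out argument. For part (1): by construction every pair $(x_1,x_2)\in W(\pi)$ satisfies $\pi(x_1)=\pi(x_2)$, so $W(\pi)$, and in particular its maximal component $W(\pi)_0$, is contained in the fiber product $X\times_Z X$. By the description of $\eta_\pi$ in Proposition~\ref{prop:maximalWfibration}, the closure of a general fiber of $\eta_\pi$ at a point $x$ is $(W(\pi)_0)_x$, which therefore sits inside the fiber $X_{\pi(x)}$ of $\pi$. Hence $\pi$ is constant on a general fiber of $\eta_\pi$; since $\eta_\pi$ is a fibration (dominant with irreducible general fiber), $\pi$ factors rationally through $\eta_\pi$, which is exactly the claimed commutative diagram.

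For part (2), fix $z\in Z$ very general, so that (by generic smoothness) $X_z$ is smooth and projective and, using part (1), the restriction $\eta_\pi|_{X_z}\cl X_z\dra Y_z$ is a fibration whose general fiber coincides with a general fiber of $\eta_\pi$. By the definition of $W(\pi)$, such a general fiber $(W(\pi)_0)_x$ (for $x\in X_z$ general) consists of points rationally equivalent in $\CH_0(X_z)$, i.e. it is a Chow constant subvariety of $X_z$; so Theorem~\ref{thm:chowconstantfibers} applied to $X_z$ shows that $\eta_\pi|_{X_z}$ is a Chow constant fibration of $X_z$. By the universal property in Theorem~\ref{MaxConstFibs}, the maximal Chow constant fibration $\eta_z\cl X_z\dra (X_z/W(\pi)_z)$ then factors through $\eta_\pi|_{X_z}$; comparing fibers, the general fibers of $\eta_\pi|_{X_z}$ are contained in those of $\eta_z$, i.e. $(W(\pi)_0)_z\subseteq (W(\pi)_z)_0$ for very general $z$.

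For the reverse inclusion I would spread out the fiberwise maximal components. A Baire‑category/countability argument (as in the proof of Proposition~\ref{prop:maximalWfibration}, using that $\CC$ is uncountable and that $W(\pi)=\bigcup_i W(\pi)_i$ is a countable union of closed subsets) produces a single index $i_0$ and a single irreducible algebraic family of subvarieties of the fibers of $W(\pi)_{i_0}\cap(X\times_Z X)\to Z$ whose fiber over a very general $z$ is the maximal component $(W(\pi)_z)_0$; taking the closure of this family in $X\times X$ gives an irreducible subvariety $W'\subseteq W(\pi)_{i_0}\subseteq W(\pi)$ that contains $\Delta_X$ (each $(W(\pi)_z)_0$ contains $\Delta_{X_z}$) and with $W'\cap(X_z\times X_z)=(W(\pi)_z)_0$ for very general $z$. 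By the uniqueness of the maximal component of $W(\pi)$ containing the diagonal (Proposition~\ref{prop:maximalWfibration}(1)), $W'\subseteq W(\pi)_0$, so $(W(\pi)_z)_0\subseteq (W(\pi)_0)_z$ for very general $z$. Combining the two inclusions gives $(W(\pi)_0)_z=(W(\pi)_z)_0$; thus $\eta_\pi|_{X_z}$ and the maximal Chow constant fibration of $X_z$ have the same general fibers and hence agree up to birational isomorphism.

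The main obstacle is the spreading‑out step: turning the pointwise assignment $z\mapsto (W(\pi)_z)_0$ into an honest irreducible subvariety of $X\times X$ that is still contained in $W(\pi)$. One must (a) use a countability argument to confine the fiberwise maximal components to a single closed piece $W(\pi)_{i_0}$ of $W(\pi)$ for very general $z$, and (b) check that, among the finitely many families of components of the fibers of $W(\pi)_{i_0}\cap(X\times_Z X)\to Z$, the rule ``pick the maximal one through the diagonal'' is constant on a very general fiber, hence selects one irreducible family. Everything else—the invocation of Theorem~\ref{thm:chowconstantfibers} and of the universal property, the passage between general fibers and the relations $W(\pi)_0$ and $W(\pi)_z$, and the genericity bookkeeping—is routine.
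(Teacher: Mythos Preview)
Your proof is correct, and your treatment of part (1) matches the paper's. For part (2), however, you take a considerably more roundabout route than the paper does.

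The paper's entire proof of (2) is: ``follows from the assertion $(\star)$ in the sketch of the proof of Proposition~\ref{prop:maximalWfibration}.'' Recall that $(\star)$ says a very general point of $W(\pi)_0$ lies in no other component $W(\pi)_i$. Applied fiberwise, this means that for very general $z$, the (irreducible) slice $(W(\pi)_0)_z$ contains a point lying in no other $(W(\pi)_i)_z$, and hence is not contained in any other $(W(\pi)_i)_z$. Since $(W(\pi)_0)_z$ contains $\Delta_{X_z}$, it is therefore a \emph{maximal} irreducible component of $W(\pi)_z$ containing the diagonal; by the uniqueness in Proposition~\ref{prop:maximalWfibration}(1) applied to $X_z$, it equals $(W(\pi)_z)_0$. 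That is the whole argument.

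By contrast, you split into two inclusions. For $(W(\pi)_0)_z\subseteq (W(\pi)_z)_0$ you invoke Theorem~\ref{thm:chowconstantfibers} and the universal property of the maximal Chow constant fibration; this is correct but heavier than needed, since the inclusion is immediate from the fact that $(W(\pi)_0)_z$ is an irreducible subset of $W(\pi)_z$ containing $\Delta_{X_z}$, hence is contained in the unique maximal such component. For the reverse inclusion you build an irreducible $W'\subset W(\pi)$ by spreading out the fiberwise maximal components and then appeal to maximality of $W(\pi)_0$. This works, but the ``main obstacle'' you flag---confining the $(W(\pi)_z)_0$ to a single $W(\pi)_{i_0}$ and selecting a single irreducible family---is exactly what $(\star)$ bypasses: once you know $(W(\pi)_0)_z$ is already maximal in $W(\pi)_z$, there is nothing to spread out. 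In short, your argument is sound, but the paper's is a one-line reduction to $(\star)$, whereas yours reconstructs a weaker version of that reduction with extra scaffolding.
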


\begin{proof}
(1) holds because the closure of the fibers of $\eta_x$ are contained in fibers of $\pi$. (2) follows from the assertion $(\star)$ in the sketch of the proof of Proposition \ref{prop:maximalWfibration}.
\end{proof}

To construct the maximal Chow-trivial fibration of X we consider the following sequence:
\[
\begin{tikzcd}
&X=X_0\arrow[dl,"\pi_0"]\arrow[d,dashed,"\eta_0"]&X_1\arrow[dl,"\pi_1"]\arrow[d,dashed,"\eta_1"]\arrow[l,swap,"\psi_1"]&\cdots\arrow[l,swap,"\psi_2"]\arrow[dl,"\pi_2"]&X_n\arrow[dl,swap,"\pi_n"]\arrow[d,dashed,"\eta_n"]\arrow[l,swap,"\psi_n"]&\cdots\arrow[dl,"\pi_{n+1}"]\arrow[l,swap,"\psi_{n+1}"]\\
\Spec(\CC)&Y_0\arrow[l]&Y_1\arrow[l,dashed]&\cdots\arrow[l,dashed]&Y_n\arrow[l,dashed]&\cdots\arrow[l,dashed]
\end{tikzcd}
\]
Each $X_i$ is a resolution of the map $\eta_{i-1}$, the map $\psi_i$ is birational, and $\eta_i$ is defined to be the maximal relative Chow constant fibration of $\pi_i$. The following proposition implies Theorem~\ref{MaxTrivFibs} for Chow trivial fibrations.

\begin{proposition}
For $n\gg 0$, we have $Y_n\simeq_\bir Y_{n+1}\simeq_\bir\cdots$. Set $Y_\infty:=Y_n$.
\begin{enumerate}
\item The composition
\[
\begin{tikzcd}
X\arrow[r,dashed,swap,"\simeq_\bir"]\arrow[rr,bend left=30,dashed,"\eta_\infty"]&X_{n+1}\arrow[r,swap,"\pi_{n+1}"]&Y_\infty
\end{tikzcd}
\]
is a Chow-trivial fibration.
\item If $\phi\cl X\dra Z$ is another Chow-trivial fibration, then $\eta_\infty$ factors through $\phi$.
\item We have $\dim(Y_\infty)\le m$ if and only if through a very general point $x\in X$, there is a Chow trivial subvariety $x\in V$ of codimension $\ge m$.
\end{enumerate}
\end{proposition}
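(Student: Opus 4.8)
The plan is to run the tower of fibrations until the fiber dimensions stabilize, read off Chow triviality at the stable stage, and then obtain the universal property by lifting it up the tower one step at a time.

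First I would set up the stabilization. For $i\ge 0$ let $F_i$ be the closure of a general fiber of $\pi_i\cl X_i\ra Y_{i-1}$ (with $Y_{-1}:=\Spec(\CC)$, so $F_0\simeq_\bir X$). Since $X_{i+1}$ is a resolution of the fibration $\eta_i$, the general fiber of $\pi_{i+1}$ is birational to that of $\eta_i$, and by Lemma~\ref{lem:relativemaxWfibration}(2) the restriction of $\eta_i$ to a very general fiber of $\pi_i$ is the maximal Chow constant fibration of that fiber; hence $F_{i+1}$ is birational to the general fiber of the maximal Chow constant fibration of $F_i$. In particular $\dim F_{i+1}\le \dim F_i$, so $(\dim F_i)_i$ is non-increasing and eventually constant, say $\dim F_i=\dim F_{n_0}$ for all $i\ge n_0$. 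For such $i$ the base of the maximal Chow constant fibration of $F_i$ is a point; by Theorem~\ref{thm:chowconstantfibers} every fiber of that fibration is a Chow constant subvariety of $F_i$, and as the only fiber is $F_i$ itself, every two points of $F_i$ are rationally equivalent in $F_i$, i.e.\ $\CH_0(F_i)\cong\ZZ$. Moreover $\eta_i$ then restricts to the constant map on a very general fiber of $\pi_i$, so $Y_i\ra Y_{i-1}$ has $0$-dimensional general fibers, and since $\pi_i$ has irreducible general fibers through which $\eta_i$ factors, $Y_i\ra Y_{i-1}$ is birational. This proves $Y_n\simeq_\bir Y_{n+1}\simeq_\bir\cdots$ for $n\ge n_0$; set $Y_\infty:=Y_{n_0}$. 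Part~(1) is now immediate: by definition $\eta_\infty$ is the composite $X\simeq_\bir X_{n_0+1}\xrightarrow{\pi_{n_0+1}}Y_\infty$, it is a fibration (it resolves the fibration $\eta_{n_0}$), and its general fiber is birational to $F_{n_0+1}$, which has $\CH_0\cong\ZZ$ by the above.

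For (2), let $\phi\cl X\dra Z$ be a Chow trivial fibration; it is Chow constant by Corollary~\ref{cor:chowtrivialimplieschowconstant}. I would prove by induction on $i$ that, viewing $\phi$ as a rational map $X_i\dra Z$ via $X_i\simeq_\bir X$, the fibration $\eta_i$ factors through $\phi$. The case $i=0$ is the universal property of the maximal Chow constant fibration (Theorem~\ref{MaxConstFibs}, Proposition~\ref{prop:maximalWfibration}(3)). For the inductive step, if $\eta_{i-1}=h\circ\phi$ then $\pi_i=h\circ\phi$ as well (since $\pi_i$ and $\eta_{i-1}$ represent the same rational map), so over a very general $y\in Y_{i-1}$ the fiber $F_i$ of $\pi_i$ equals $\phi^{-1}(h^{-1}(y))$ and $\phi|_{F_i}$ is a fibration whose general fibers are general fibers of $\phi$ — hence Chow trivial, hence Chow constant. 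Since $\eta_i|_{F_i}$ is the maximal Chow constant fibration of $F_i$ by Lemma~\ref{lem:relativemaxWfibration}(2), it factors through $\phi|_{F_i}$; these fiberwise factorizations are unique and spread out to a rational map $Z\dra Y_i$ through which $\eta_i$ factors. Taking $i\ge n_0$ and using that $\pi_{n_0+1}$ represents $\eta_{n_0}$ yields (2). (It then follows that $\eta_\infty$ is almost holomorphic: otherwise $Y_\infty$ would be uniruled by Lemma~\ref{ruledness}, and composing $\eta_\infty$ with the maximal rationally connected fibration of $Y_\infty$ would, by Corollary~\ref{cor:compositionofchowfib}, give a Chow trivial fibration with strictly smaller base, contradicting (2).)

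For (3), the forward direction is clear: if $\dim Y_\infty\le m$ then the fiber $L_x$ of $\eta_\infty$ through a very general $x\in X$ is a Chow trivial subvariety of codimension $\dim Y_\infty\le m$. For the converse, suppose a very general $x$ lies on a Chow trivial subvariety $V$. After passing to a common birational model, the fibers of $\pi_i$ through $x$ descend to a decreasing chain $X=F_0(x)\supseteq F_1(x)\supseteq\cdots$ with $F_i(x)=L_x$ for $i\ge n_0$, and with $F_{i+1}(x)$ the maximal Chow constant subvariety of $F_i(x)$ through $x$ (by Lemma~\ref{lem:relativemaxWfibration}(2) together with the analysis of the maximal Chow constant fibration in the sketch of Proposition~\ref{prop:maximalWfibration}). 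Since $\CH_0(V)\cong\ZZ$, any two points of $V$ are rationally equivalent within $V$, hence within anything containing $V$; so once $V\subseteq F_i(x)$, $V$ is a Chow constant subvariety of $F_i(x)$ through $x$, and therefore $V\subseteq F_{i+1}(x)$. Starting from $V\subseteq F_0(x)$ and iterating gives $V\subseteq L_x$, so $L_x$ is the largest Chow trivial subvariety through a very general point and has codimension $\dim Y_\infty$; this yields the stated equivalence.

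The main obstacle I expect is not conceptual but bookkeeping: turning the fiberwise and relative statements (in the stabilization and in the induction for (2)) into honest rational maps requires the generic-flatness and spreading-out arguments that already appear in the sketch of Proposition~\ref{prop:maximalWfibration}, and one must track everything carefully through the birational modifications $X_i\simeq_\bir X$. The one genuinely substantive point is that the iteration terminates in a Chow \emph{trivial} fibration rather than merely one that is relatively Chow constant at each level — this is exactly where the dimension drop is fed into Theorem~\ref{thm:chowconstantfibers}, via the implication that a maximal Chow constant fibration with a point as base forces $\CH_0\cong\ZZ$.
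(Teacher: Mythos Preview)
Your proposal is correct and follows the same approach as the paper. The paper's own proof is extremely terse---it simply cites Lemma~\ref{lem:relativemaxWfibration}(2) together with the observation that ``$X_y\to\mathrm{pt}$ is Chow constant $\iff\CH_0(X_y)\cong\ZZ$'' for (1), and says that (2) and (3) ``can be checked for $\eta_1,\dots,\eta_n$''---and your write-up is a faithful expansion of exactly those hints: the dimension-drop stabilization, the inductive lifting of the universal property through the tower using Lemma~\ref{lem:relativemaxWfibration}(2) at each stage, and the chain-of-containments argument for (3).
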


\begin{proof}
(1) follows from Lemma \ref{lem:relativemaxWfibration}(2) and the fact that the map from a fiber $X_y$ to a point is a Chow constant fibration $\iff\CH_0(X_y)\cong \ZZ$. (2) and (3) can be checked for $\eta_1,\cdots, \eta_n$.
\end{proof}

\begin{definition}
Let $\eta_{\infty}$ and $Y_{\infty}$ be as in the previous proposition.  The \textbf{maximal Chow trivial fibration} is the rational map
\[
\eta_\infty \cl X\dra Y_\infty.
\]
\end{definition}

\begin{remark}\label{genchowprop}
It follows from Lemma~\ref{ruledness} and Corollary~\ref{cor:compositionofchowfib} that the maximal Chow constant fibration and the maximal Chow trivial are almost holomorphic (see Def.~\ref{almhom}). As a consequence if $x\in X$ is very general, any Chow constant (resp. trivial) subvariety is contained in a smooth Chow constant (resp. trivial) subvariety.
\end{remark}

\section{Maximal Cohomologically Constant and Trivial Fibrations}\label{sec:maxcohomological}

The aim of this section is to prove the existence of maximal cohomologically constant and trivial fibrations. In fact, we show that given any integrable distribution $\Dc$ on a smooth complex projective variety (e.g. Voisin's distribution, Def.~\ref{def:VD}) there is a maximal fibration whose generic fibers are contained in the leaves of the associated foliation. The idea of using a foliation to prove the existence of maximal fibrations was suggested to us by Claire Voisin.

\begin{definition}
Let $\Dc\subset T_X$ be an integrable distribution on a smooth variety $X$. We say that a subvariety $V\subset X$ is \textbf{contained in $\Dc$} if at a general point $x\in V$,
\begin{enumerate}
\item $\Dc$ is locally a vector subbundle of $T_X$ at $x$ (i.e. the quotient $T_X/\Dc$ is locally free at $x$), and
\item the subspace $T_V|_x\subset T_X|_x$ is contained in $\Dc|_x$.
\end{enumerate}
\end{definition}

\begin{remark}
Assume that $V$ intersects the open set $U\subset X$ where $\Dc\subset T_X$ is a sub-vector bundle. Consider the composition
\[
\begin{tikzcd}
T_V\arrow[r]\arrow[rr,bend right=30,"\alpha"]&T_X|_V\arrow[r]&(T_X|_V)/(\Dc|_V)
\end{tikzcd}
\]
Then $V$ is contained in $\Dc\iff \alpha|_{V\cap U}\equiv 0$.
\end{remark}

\begin{remark}\label{leafy}
If $U$ is the open set where $\Dc\subset T_X$ is a subbundle, then  $\Dc$ gives rise to a foliation on $U$. Assuming $x\in V\cap U$, then $V$ is contained in $\Dc \iff$ analytically locally around $x$, $V$ is contained in a leaf of the foliation.
\end{remark}

\begin{definition}
Let $X$ be a smooth projective variety.  A fibration $f: X \dashrightarrow Y$ is a \textbf{$\Dc$-constant fibration} if the general fiber is contained in $\Dc$.  
\end{definition}

\begin{remark}
By Proposition~\ref{descendingForms} we have that for a smooth projective variety $X$, a fibration $f\cl X\dra Y$ is Chow constant $\iff$ it is $\Vc_X$-constant $\iff$ a general fiber is contained in Voisin's distribution.
\end{remark}

To construct maximal $\Dc$-constant fibrations, we want to show that there is a maximal family of $\Dc$-constant subvarieties. We proceed as follows.  Let $\Hilb(X)$ be the Hilbert scheme of $X$ and consider the locally closed subset
\[
\Dc\Var:=\left\{ \hspace{.05in} [V]\in \Hilb(X) \hspace{.05in} \middle| \begin{array}{l} V\text{ is a variety, and}\\ V \text{ is contained in }\Dc\end{array}\right\}\subset \Hilb(X),
\]
with the reduced scheme structure. Then $\Dc\Var$ is a countable union of quasiprojective varieties. Write
\[
\Dc\Var=\bigcup\limits_{i\in \NN} S_i
\]
where each $S_i$ is a subvariety and $S_i\subset S_j\iff i=j$. Let $\Sbar_i$ denote the closure of $S_i\subset \Hilb(X)$. Write $\Fbar_i$ for the universal family over $\Sbar_i$. $\Fbar_i$ comes equipped with projections:
\[
\begin{tikzcd}
\Fbar_i\arrow[r,"q_i"]\arrow[d,"p_i"]&X.\\
\Sbar_i
\end{tikzcd}
\]
It is natural to restrict ourselves to the varieties contained in $\Dc$ which sweep out $X$. Define
\begin{center}
$I:=\{i\in \NN \mid q_i\text{ is dominant}\}\subset \NN,$ and $\DDom := \{\Sbar_i\}_{i\in I}$
\end{center}

\begin{remark}\label{vgenpoint}
Let $x\in X$ be a very general point, and let $V\subset X$ be a subvariety contained in $\Dc$. If $x\in V$ then there exists $\Sbar_i\in \DDom$ such that $[V]\in \Sbar_i$.
\end{remark}

We make $\DDom$ into a partially ordered set by 
\[
\Sbar_i\le \Sbar_j \iff\text{ for }[V]\in \Sbar_i \text{ general, } \exists\text{ }[W]\in \Sbar_j \text{ such that }V\subset W.
\]
For any $i,j \in \NN$ we define
\[
\Sbar_{\ge i}:=\{ \Sbar_k| S_k\ge S_i\} \subset \DDom,\text{ and } \Sbar_i \vee \Sbar_j := \Sbar_{\ge i} \cap \Sbar_{\ge j} .
\]

\begin{construction}
We show that for any $\Sbar_i, \Sbar_j\in \DDom$, the set $\Sbar_i\vee \Sbar_j\ne \emptyset.$ We may choose very general points $x\in X$,  $[V_1]\in \Sbar_i$, and $[V_2]\in \Sbar_j$ subject to the following conditions:
\end{construction}

\begin{enumerate}[label=(\roman*)]
\item $x$ is very general in the sense of Remark~\ref{vgenpoint},
\item for any $\Sbar_k$ and $[V_3]\in \Sbar_k$ such that $V_1, V_2\subset V_3$ we have $\Sbar_k\in \Sbar_i\vee \Sbar_j$,
\item $x\in V_1$ and $x\in V_2$.
\end{enumerate}

Let $Q\subset q_j^{-1}(V_1)$ be an irreducible component such that $[V_2]\in p_j(Q)$ and define:
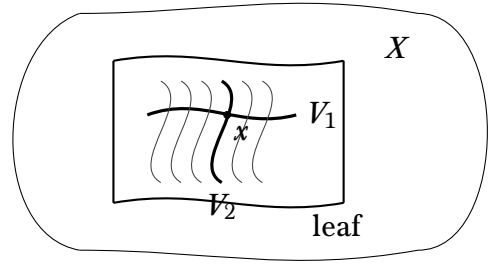
\begin{wrapfigure}{r}{-20pt}
\centering
\hspace{20pt}
\begin{tikzpicture}[scale=.9]
 \draw[very thick] (0,0) to [out=20,in=195] (2.2,0);
 \draw[color=darkgray] (.2,.5) to [out=-30,in=150] (.2,-1);
 \draw[color=darkgray] (.5,.5) to [out=-30,in=150] (.5,-1);
 \draw[color=darkgray] (.8,.5) to [out=-30,in=150] (.8,-1);
 \draw[very thick] (1.1,.5) to [out=-30,in=150] (1.1,-1);
 \draw[color=darkgray] (1.4,.5) to [out=-30,in=150] (1.4,-1);
 \draw[color=darkgray] (1.7,.5) to [out=-30,in=150] (1.7,-1);
 \node[right] at (2.2,0) {$V_1$};
 \draw[fill] (1.18,0) circle [radius=0.05];
 \node[below right] at (1.1,0) {$x$};
 \node[below] at (1.1,-1) {$V_2$};
 \draw[thick] (-.5,.8) to [out=10,in=190] (2.9,.8);
 \draw[thick] (-.5,-1.3) to [out=10,in=190] (2.9,-1.3);
 \draw[thick] (-.5,.8) to (-.5,-1.3);
 \draw[thick] (2.9,.8) to (2.9,-1.3);
 \draw (-1,1.5) to [out=0,in=170] (4,1.5);
 \draw (-1,-2) to [out=0,in=190] (4,-2);
 \draw (-1,1.5) to [out=200,in=170] (-1,-2);
 \draw (4,1.5) to [out=0,in=0] (4,-2);
 \node at (3.7,1) {$X$};
 \node[below] at (2.8,-1.3) {leaf};
\end{tikzpicture}
\caption{The deformations of $V_2$ considered are locally contained in a leaf. As they meet $V_1$, which is also contained in a leaf, they are all contained in the same leaf.}
\end{wrapfigure}
\hspace{1in}$V_3:=q_j(p_j^{-1}(p_j(Q))).$

\noindent Then, $V_3\subset X$ is a subvariety containing both $V_1$ and $V_2$. If we can show that $V_3$ is contained in $\Dc$, then by condition (iv) above we are done. As $x$ is very general, $\Dc$ is a subbundle of $T_X$ in a neighborhood of $x$. Therefore, $V_3$ is (the closure of) a union of $\Dc$-constant subvarieties which are deformations of $V_2$, all of which meet $V_1$. As $V_1$ is also $\Dc$-constant, in an analytic neighborhood of $x$ every deformation of $V_2$ must be contained in the leaf which contains $x$. Therefore, $V_3$ is analytically locally contained in the leaf at $x$, hence by Remark~\ref{leafy} we see that $V_3$ is contained in $\Dc$.

\begin{remark}\label{rem:joinconstruction}
Note that the construction of $V_3$ involves choices and is asymmetric in $i$ and $j$.  The following properties hold:
\begin{enumerate}
\item if $V_1=V_3$ then $p_j^{-1}(p_j(Q))=Q$, and
\item if $V_2=V_3$ then $p_j(Q)$ is a single point and the map $q_j\cl\Fbar_j\ra X$ is generically finite.
\end{enumerate}
\end{remark}

As the dimension of subvarieties of $X$ are bounded from above, there is a unique maximal family $\Sbar_0\in\DDom$. Thus $\Sbar_0\vee \Sbar_0=\{ \Sbar_0\}$. 

\begin{theorem}\label{thm:maxDconstantfib} With the above setup.
\begin{enumerate}
\item The map $q_0\cl \Fbar_0\ra X$ is birational, and the composition
\[
\begin{tikzcd}
X\arrow[rr,dashed,bend right=40,"\eta"]\arrow[r,dashed,"q_0^{-1}"]&\Fbar_0\arrow[r,"p_0"]&\Sbar_0
\end{tikzcd}
\]
is a $\Dc$-constant fibration.
\item If $V\subset X$ is a $\Dc$-constant subvariety which contains a very general point $x\in X$, then $V$ is contracted by $\eta$.
\item If $\phi\cl X\dra Y$ is any $\Dc$-constant fibration, then $\eta$ factors through $\phi$.
\end{enumerate}
\end{theorem}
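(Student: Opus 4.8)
The plan is to establish the three claims in order, exploiting the join construction above and the maximality of $\Sbar_0$. For (1): since $q_0 \colon \Fbar_0 \to X$ is dominant (as $\Sbar_0 \in \DDom$), it suffices to show it is generically finite and of degree one. First I would show generic finiteness. If $q_0$ were not generically finite, then through a very general point $x \in X$ there would be a positive-dimensional subvariety of $\Fbar_0$ contracted by $q_0$; tracing through the universal family, this produces (at least) a $1$-parameter family of distinct $\Dc$-constant subvarieties $\{V_t\}$ all passing through $x$, each a member of $\Sbar_0$. Taking $V_1 = V_{t_0}$ and $V_2 = V_{t_1}$ for two such members and running the join construction $\Sbar_0 \vee \Sbar_0 = \{\Sbar_0\}$, the resulting $V_3$ lies in some $\Sbar_k \in \Sbar_0 \vee \Sbar_0 = \{\Sbar_0\}$ and strictly contains $V_1$ (it also contains $V_2 \ne V_1$), contradicting that $V_1$ already has maximal dimension among members of $\Sbar_0$ — unless $V_1 = V_3$, but then Remark~\ref{rem:joinconstruction}(1) forces $p_j^{-1}(p_j(Q)) = Q$, i.e. $V_2 \subset V_1$, again a contradiction. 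Hence $q_0$ is generically finite. For degree one: given two general points $x, x' \in q_0^{-1}(x)$ lying in members $V, V'$ of $\Sbar_0$ through the same point $x \in X$, apply the join construction again; by generic finiteness of $q_0$ and Remark~\ref{rem:joinconstruction}(2) we must be in the case $V_2 = V_3$, which combined with generic finiteness forces $V = V'$ and the two points to coincide. So $q_0$ is birational, and the composition $\eta = p_0 \circ q_0^{-1}$ is a rational map whose general fiber is (birational to) a general member $V$ of $\Sbar_0$; since every such $V$ is $\Dc$-constant by definition of $\Dc\Var$, $\eta$ is a $\Dc$-constant fibration.

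For (2): let $V \subset X$ be a $\Dc$-constant subvariety through a very general point $x$. By Remark~\ref{vgenpoint} there is some $\Sbar_i \in \DDom$ with $[V] \in \Sbar_i$. Since $\Sbar_0$ is the unique maximal family in $\DDom$, we have $\Sbar_i \le \Sbar_0$, so for a general member — and, after moving $x$ slightly if needed while keeping it very general, for our $V$ itself — there is a member $W$ of $\Sbar_0$ with $V \subset W$. The general fiber of $\eta$ through $x$ is exactly such a $W$ (using part (1) and that $q_0$ is birational near $x$), so $V \subset W$ is contracted by $\eta$.

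For (3): let $\phi \colon X \dra Y$ be any $\Dc$-constant fibration. Its general fiber $F$ is a $\Dc$-constant subvariety passing through a very general point $x$, hence by Remark~\ref{vgenpoint} defines a point of some $\Sbar_i \in \DDom$; in fact the general fibers of $\phi$ sweep out a family belonging to $\DDom$. By maximality of $\Sbar_0$ this family is $\le \Sbar_0$, so by part (2) the general fiber $F$ of $\phi$ is contracted by $\eta$. Therefore $\eta$ is constant on the general fiber of $\phi$, which is exactly the statement that $\eta$ factors through $\phi$ (after resolving, the induced rational map $Y \dra \Sbar_0$ is well-defined because $\eta$ is constant on general $\phi$-fibers).

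The main obstacle I anticipate is the birationality of $q_0$ in part (1) — specifically, extracting from the join construction the precise contradiction with maximality, which requires carefully distinguishing the degenerate cases of Remark~\ref{rem:joinconstruction} (whether $V_1 = V_3$, $V_2 = V_3$, or neither) and controlling "very general" choices so that the irreducible component $Q \subset q_j^{-1}(V_1)$ and the passage to closures behave well. Parts (2) and (3) are then comparatively formal, being essentially unwindings of the partial order $\le$ on $\DDom$ and the maximality of $\Sbar_0$.
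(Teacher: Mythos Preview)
Your approach is essentially the paper's: use the join construction with $i=j=0$ together with $\Sbar_0 \vee \Sbar_0 = \{\Sbar_0\}$ to force $q_0$ to be birational, then derive (2) from unique maximality of $\Sbar_0$ via Remark~\ref{vgenpoint}, and (3) from (2) by applying it to a very general fiber of $\phi$. The paper compresses the proof of (1) to a single sentence pointing to Remark~\ref{rem:joinconstruction}; you are filling in details it leaves implicit, and your arguments for (2) and (3) match the paper's.

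There is one genuine misstep in your degree-one argument: you write ``by generic finiteness of $q_0$ and Remark~\ref{rem:joinconstruction}(2) we must be in the case $V_2 = V_3$.'' This reads Remark~\ref{rem:joinconstruction}(2) backwards --- it asserts $V_2 = V_3 \Rightarrow q_j$ generically finite, not the converse. The correct (and simpler) reason that $V_1 = V_2 = V_3$ is exactly the one you already gave for generic finiteness: since $[V_3]$ lies in some $\Sbar_k \in \Sbar_0 \vee \Sbar_0 = \{\Sbar_0\}$, all three are members of the irreducible family $\Sbar_0$ and hence have the same dimension, so the inclusions $V_1, V_2 \subset V_3$ force equality. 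That argument is indifferent to whether $q_0^{-1}(x)$ is positive-dimensional or merely contains two distinct points, so your split into ``first generically finite, then degree one'' is unnecessary --- a single application of the join construction to two distinct very general members of $\Sbar_0$ through a very general $x$ already yields the contradiction and hence birationality.
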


\begin{proof}
It follows from Remark \ref{rem:joinconstruction} that $q_0$ is birational. By construction, the map $\eta$ is a $\Dc$-constant fibration, which proves (1). To prove (2), by Remark \ref{vgenpoint} there exists an $\Sbar_i \in \DDom$ such that $[V] \in \Sbar_i$. But $\Sbar_0$ is uniquely maximal, so $V$ must be contained in a fiber of $\eta.$ Finally, to prove (3), let $V_y = \phi^{-1}(y)$ be the closure of a very general fiber of $\phi$.  By (2), this must be contracted by $\eta$. As a consequence, the closure of the image of $X\dra Y\times \Sbar_0$ is the graph of the appropriate rational map $Y\dra \Sbar_0$.
\end{proof}

\begin{definition}
After resolving the singularities of $\Sbar_0$, we call the map $\eta$ the \textbf{maximal $\Dc$-constant fibration}.  When $\Dc = \Vc_X$, $\eta$ is the \textbf{maximal cohomologically constant fibration}.
\end{definition}

\noindent This proves Theorem~\ref{MaxConstFibs} for cohomologically constant fibrations. Furthermore, we have

\begin{corollary}
Given a regular fibration $\pi\cl X\ra Z$ of smooth projective varieties, there is a maximal relative cohomologically constant fibration
\[
\begin{tikzcd}
X\arrow[rr,dashed,"\eta_\pi"]\arrow[dr,swap,"\pi"]&&Y\arrow[dl]\\
&Z&
\end{tikzcd}
\]
\end{corollary}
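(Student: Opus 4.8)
The plan is to mimic the construction of the maximal Chow trivial fibration from Section~\ref{sec:maxchowconst}, replacing at each stage the maximal relative Chow constant fibration with the maximal relative cohomologically constant fibration. The key new ingredient is a \emph{relative} version of Theorem~\ref{thm:maxDconstantfib}: given a regular fibration $\pi\cl X\ra Z$, one should first produce the relative Voisin distribution $\Vc_\pi\subset T_X$ (Remark~\ref{rmk:relvoisindist}) and then show that there is a maximal fibration $\eta_\pi\cl X\dra Y$ over $Z$ whose general fibers are contained in $\Vc_\pi$. Concretely, one repeats the Hilbert-scheme argument above: consider the locally closed subset of $\Hilb(X)$ parametrizing varieties $V$ contained in $\Vc_\pi$ (automatically such a $V$ lies in a single fiber of $\pi$, since $\Vc_\pi\subset T_{X/Z}$ over the smooth locus), write it as $\bigcup_i S_i$, restrict to the families that dominate $X$, partially order them by the same relation $\Sbar_i\le\Sbar_j$, and run the join construction verbatim. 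The join argument only uses that $\Vc_\pi$ is an integrable distribution that is a subbundle near a very general point, so it applies unchanged; this yields a unique maximal family $\Sbar_0$, a birational map $q_0\cl\Fbar_0\ra X$, and the fibration $\eta_\pi\cl X\dra Y:=\Sbar_0$, which by construction factors through $Z$ (since every $V$ contained in $\Vc_\pi$ lies in a fiber of $\pi$) and is uniquely maximal among $\Vc_\pi$-constant fibrations over $Z$.

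The second step is to check that, for a very general $z\in Z$, the restriction $\eta_\pi|_{X_z}\cl X_z\dra Y_z$ is equivalent to the maximal cohomologically constant fibration of $X_z$. This follows from Remark~\ref{rmk:relvoisindist}, which gives $\Vc_\pi|_{X_z}=\Vc_{X_z}$ for $z$ general, combined with the standard Baire-category / very-general-point argument (the assertion $(\star)$ in the sketch of Proposition~\ref{prop:maximalWfibration}): a subvariety of $X_z$ contained in $\Vc_{X_z}$ and sweeping out $X_z$ extends to a family over $\Sbar_i$ in the relative picture, so the maximal relative family restricts to the maximal family on the very general fiber.

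The third step is the iteration, exactly as in the Chow trivial case. Build the sequence $X=X_0\xleftarrow{\psi_1}X_1\xleftarrow{\psi_2}\cdots$ with $X_{i+1}$ a resolution of $\eta_{\pi_i}\cl X_i\dra Y_i$, $\psi_{i+1}$ birational, $\pi_{i+1}\cl X_{i+1}\ra Y_i$ the structure map, and $\eta_{\pi_{i+1}}$ the maximal relative cohomologically constant fibration of $\pi_{i+1}$. Since the relative dimension of $\pi_i$ strictly drops at each stage unless $\eta_{\pi_i}$ is birational, the sequence stabilizes: $Y_n\simeq_\bir Y_{n+1}\simeq_\bir\cdots=:Y_\infty$. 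The stabilized map $\eta_\pi:=(\pi_{n+1}\circ\psi_1^{-1}\circ\cdots)\cl X\dra Y_\infty$ fits in the required triangle over $Z$ because every $\eta_{\pi_i}$ does. That $\eta_\pi$ is itself cohomologically constant over $Z$, and maximal among such, follows as in the Chow trivial proposition: on a very general fiber $X_z$ the composition is the iterated maximal cohomologically constant construction on $X_z$, which stabilizes to the maximal cohomologically constant fibration of $X_z$ (here one uses Corollary~\ref{cor:compositionofcohfib}, that a composition of cohomologically constant fibrations is cohomologically constant, to see the composite is cohomologically constant, and Proposition~\ref{descendingForms}/Remark~\ref{rem:fibersleaves} to identify ``cohomologically constant'' with ``fibers in Voisin's foliation''). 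Universality against an arbitrary relative cohomologically constant fibration $\phi\cl X\dra W$ over $Z$ is checked at each stage $\eta_{\pi_i}$ via Theorem~\ref{thm:maxDconstantfib}(3) and then composed.

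The main obstacle is the relative join construction in step one: one must be careful that $\Vc_\pi$, defined only as a saturation of an extension from the smooth locus of $\pi$ (Remark~\ref{rmk:relvoisindist}), is genuinely a subbundle of $T_X$ near a very general point, so that the leafwise containment argument in Figure~2 goes through. Once one knows that a very general point avoids the non-subbundle locus of $\Vc_\pi$ and the non-smooth locus of $\pi$ simultaneously, the deformation-stays-in-a-leaf argument and Remark~\ref{leafy} apply exactly as before, and the rest is bookkeeping parallel to Section~\ref{sec:maxchowconst}.
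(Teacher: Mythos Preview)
Your first step is exactly the paper's proof: apply Theorem~\ref{thm:maxDconstantfib} with $\Dc=\Vc_\pi$, note that every $\Vc_\pi$-constant subvariety lies in a fiber of $\pi$ (since $\Vc_\pi\subset T_{X/Z}$ over the smooth locus), and resolve $Y\ra Z$ if necessary. That is the entire content of the corollary, and nothing more is needed.

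The issue is that you have misread what the corollary is asserting. It only asks for the maximal \emph{relative cohomologically constant} fibration of $\pi$, which is produced by a single application of the $\Dc$-constant machinery with $\Dc=\Vc_\pi$; this is the direct analogue of Lemma~\ref{lem:relativemaxWfibration}, not of the iterated Chow-trivial construction. Your step~2 is actually the content of the \emph{next} result in the paper (Lemma~\ref{lem:maxcohfibrationonfibers}), and your step~3 is the construction of the maximal cohomologically \emph{trivial} fibration, a separate and later statement. Moreover, the iterated map you build in step~3 is in general \emph{not} the object the corollary produces: after the first step you change the base from $Z$ to $Y_0$ and take the maximal $\Vc_{\pi_1}$-constant fibration, which can strictly shrink the target. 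The resulting $Y_\infty$ is the base of a (relative) cohomologically trivial fibration, which is typically strictly smaller than the $Y$ of the corollary; so your final $\eta_\pi$ would satisfy the wrong universal property (it would be universal for cohomologically trivial, not cohomologically constant, fibrations over $Z$). In short: stop after your first paragraph, and you have the paper's proof.
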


\begin{proof}
Apply the construction of the maximal $\Dc$-constant fibration when $\Dc=\Vc_\pi$, the relative Voisin distribution (Remark \ref{rmk:relvoisindist}). Then, if necessary, resolve the map $Y\ra Z$. This defines a maximal relative cohomologically constant fibration $\eta_\pi \cl X \dra Y$ over $Z$, as desired.  
\end{proof}

\begin{lemma}\label{lem:maxcohfibrationonfibers}
Given a regular fibration $\pi \cl X \ra Z$ of smooth projective varieties, for a very general point $z \in Z$, the maximal relative cohomologically constant fibration $\eta_\pi$ induces the maximal cohomologically constant fibration on the fibers over $z$ (i.e., for general $z \in Z$, the map $\eta_\pi \vert_{X_z} \cl X_z \dra Y_z$ is the maximal cohomologically constant fibration of $X_z$).
\end{lemma}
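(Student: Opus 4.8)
The plan is to reduce the statement to the corresponding fact for the relative Voisin distribution, namely that $\Vc_\pi|_{X_z} = \Vc_{X_z}$ for a very general fiber (Remark~\ref{rmk:relvoisindist}), and then to show that the construction of the maximal $\Dc$-constant fibration is compatible with restriction to a very general fiber. First I would recall from the construction preceding Theorem~\ref{thm:maxDconstantfib} that the maximal $\Vc_\pi$-constant fibration $\eta_\pi$ is obtained from the uniquely maximal component $\Sbar_0$ of the family $\Vc_\pi\Var$ of subvarieties of $X$ contained in $\Vc_\pi$. Since each fiber of $\pi$ over a very general $z$ contains a very general point of $X$, Remark~\ref{vgenpoint} guarantees that any $\Vc_\pi$-constant subvariety through a very general point of $X_z$ lies in some $\Sbar_i\in\DDom$; combined with Remark~\ref{rmk:relvoisindist} (which is stated for a general fiber) this says that for very general $z$, a subvariety $V\subset X_z$ is contained in $\Vc_{X_z}$ if and only if $V$, viewed in $X$, is contained in $\Vc_\pi$.

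The key step is then the following restriction statement: for a very general $z\in Z$, the fiber $Y_z$ of the (resolved) map $Y\ra Z$, together with the induced map $\eta_\pi|_{X_z}\cl X_z\dra Y_z$, realizes the maximal $\Vc_{X_z}$-constant fibration of $X_z$. I would verify this by checking the universal property from Theorem~\ref{thm:maxDconstantfib}(3) directly on the fiber. On one hand, $\eta_\pi|_{X_z}$ is a $\Vc_{X_z}$-constant fibration: its general fiber is a general fiber of $\eta_\pi$ contained in $X_z$, hence contained in $\Vc_\pi$, hence (by the equivalence above) contained in $\Vc_{X_z}$; irreducibility of the general fiber is inherited since generality of $z$ ensures the graph of $\eta_\pi$ is flat over $Z$ near $z$, so the general fiber of $\eta_\pi|_{X_z}$ is the general fiber of $\eta_\pi$. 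On the other hand, if $\phi_z\cl X_z\dra W$ is any $\Vc_{X_z}$-constant fibration, I want $\eta_\pi|_{X_z}$ to factor through it: by Theorem~\ref{thm:maxDconstantfib}(2) applied to $X_z$, a very general fiber of $\phi_z$ is a $\Vc_{X_z}$-constant subvariety through a very general point of $X_z$, which by the equivalence above is a $\Vc_\pi$-constant subvariety through a very general point of $X$, hence is contracted by $\eta_\pi$ by Theorem~\ref{thm:maxDconstantfib}(2) applied to $X$. Since this holds for the general fiber of $\phi_z$, restricting $\eta_\pi$ to $X_z$ and arguing as in the proof of Theorem~\ref{thm:maxDconstantfib}(3), the closure of the image of $X_z\dra W\times Y_z$ is the graph of a rational map $W\dra Y_z$, i.e. $\eta_\pi|_{X_z}$ factors through $\phi_z$. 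By the uniqueness part of Theorem~\ref{thm:maxDconstantfib}, $\eta_\pi|_{X_z}$ is the maximal cohomologically constant fibration of $X_z$.

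The main obstacle will be the careful handling of the various ``very general'' conditions: one must simultaneously arrange that $z$ is general enough for Remark~\ref{rmk:relvoisindist} to apply (so $\Vc_\pi|_{X_z}=\Vc_{X_z}$), that general points of $X_z$ are very general in $X$ in the sense of Remark~\ref{vgenpoint}, that the graph of $\eta_\pi$ is flat over a neighborhood of $z$ so that fibers behave well, and that the universal families $\Sbar_i$ restrict compatibly to families of subvarieties of $X_z$. Each of these excludes only a countable union of proper subvarieties of $Z$ (or, dually, is an open condition), so their conjunction still holds for very general $z$; making this bookkeeping precise, rather than any new geometric input, is where the work lies. Everything else is a formal consequence of the universal property already established in Theorem~\ref{thm:maxDconstantfib}.
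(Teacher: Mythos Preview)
Your proposal is correct and follows the same approach as the paper: the key input is that $\Vc_\pi|_{X_z}=\Vc_{X_z}$ for general $z$ (Remark~\ref{rmk:relvoisindist}), from which the paper simply asserts that ``the statement follows,'' whereas you spell out explicitly why the restriction $\eta_\pi|_{X_z}$ satisfies the universal property of Theorem~\ref{thm:maxDconstantfib} on the fiber. Your additional bookkeeping about the various ``very general'' conditions is exactly the content the paper leaves implicit.
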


\begin{proof}
As stated in Remark~\ref{rmk:relvoisindist}, for a general point $x \in X_z$, the distribution $\Vc_{\pi}$ is equal to $\Vc_{X_z}$.  The statement follows.
\end{proof}

To obtain the maximal cohomologically trivial fibration of a smooth variety $X$, we proceed as in the end of \S\ref{sec:maxchowconst}. Consider the sequence
\[
\begin{tikzcd}
&X=X_0\arrow[dl,"\pi_0"]\arrow[d,dashed,"\eta_0"]&X_1\arrow[dl,"\pi_1"]\arrow[d,dashed,"\eta_1"]\arrow[l,swap,"\psi_1"]&\cdots\arrow[l,swap,"\psi_2"]\arrow[dl,"\pi_2"]&X_n\arrow[dl,swap,"\pi_n"]\arrow[d,dashed,"\eta_n"]\arrow[l,swap,"\psi_n"]&\cdots\arrow[dl,"\pi_{n+1}"]\arrow[l,swap,"\psi_{n+1}"]\\
\Spec(\CC)&Y_0\arrow[l]&Y_1\arrow[l]&\cdots\arrow[l]&Y_n\arrow[l]&\cdots\arrow[l]
\end{tikzcd}
\]
where the birational map $\psi_i \cl X_i \ra X_{i-1}$ is a resolution of the map $\eta_{i-1}$ and $\eta_i$ is defined to be the maximal relative cohomologically constant fibration of $\pi_i$.

For $n \gg 0$, $Y_n \simeq_\bir Y_{n+1} \simeq_\bir \dots$ (as the fiber dimension is bounded). For $n$ sufficiently large define $Y_{\infty} := Y_n$. Define $\eta_\infty \cl X \dra Y_{\infty}$ to be the composition $\eta_{\infty} \cl X \simeq_\bir X_{n+1} \ra Y_n = Y_{\infty}$.  This is the maximal cohomologically trivial fibration of $X$.  

\begin{proposition}
\begin{enumerate}
\item The rational map $\eta_{\infty} \cl X \dra Y_{\infty}$ is a cohomologically trivial fibration.
\item If $\phi\cl X\dra Z$ is another cohomologically trivial fibration, then $\eta_\infty$ factors through $\phi$.
\end{enumerate}
\end{proposition}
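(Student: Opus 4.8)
The plan is to reproduce, for the cohomological notions, the argument used at the end of \S\ref{sec:maxchowconst} for Chow-trivial fibrations, replacing \Roi tman's relative quotient by the maximal relative cohomologically constant fibration (Lemma~\ref{lem:maxcohfibrationonfibers}) and replacing the condition ``$\CH_0(F)\cong\ZZ$'' on a general fiber $F$ by ``$\hpo(F)=0$ for all $p>0$''. Two inputs are used throughout: Remark~\ref{rem:fibersleaves} (via Proposition~\ref{descendingForms}), which says that for a smooth projective $X$ a fibration is cohomologically constant exactly when it is $\Vc_X$-constant, so that the universal property in Theorem~\ref{thm:maxDconstantfib}(3) applies; and Corollary~\ref{CohTrivConst}, that a cohomologically trivial fibration of smooth projective varieties is cohomologically constant.

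For (1), since the tower stabilizes at stage $n$, the map $Y_{n+1}\ra Y_n$ is birational. By Lemma~\ref{lem:maxcohfibrationonfibers}, for a very general $y\in Y_n$ the restriction $\eta_{n+1}\vert_{(X_{n+1})_y}$ is the maximal cohomologically constant fibration of the smooth projective fiber $(X_{n+1})_y$, and its base $(Y_{n+1})_y$ is a single point because $Y_{n+1}\ra Y_n$ is birational. Since a smooth projective $W$ has $\hpo(W)=0$ for all $p>0$ precisely when $W\ra\Spec(\CC)$ is cohomologically constant, it follows that $\hpo((X_{n+1})_y)=0$ for all $p>0$. As $\eta_\infty$ is, under the birational identification $X\simeq_\bir X_{n+1}$, just $\pi_{n+1}$, its general fiber is birational to $(X_{n+1})_y$; hence birational invariance of $\hpo$ (Lemma~\ref{lem:pformsbirational}) shows $\eta_\infty$ is cohomologically trivial.

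For (2), I would first resolve so that $\phi\cl X\ra Z$ is a regular fibration with $Z$ smooth, whence $\phi$ is cohomologically constant by Corollary~\ref{CohTrivConst}. Then one shows by induction on $i$ that $\eta_i$ factors through the induced map $\phi_i\cl X_i\ra Z$ (the composite of $\phi$ with the birational morphism $X_i\to X$, made regular after a resolution), say $\eta_i=\alpha_i\circ\phi_i$ with $\alpha_i\cl Z\dra Y_i$; taking $i=n$ and composing with $X\simeq_\bir X_{n+1}$ then yields the factorization of $\eta_\infty$ through $\phi$. The base case $i=0$ is the universal property of the maximal cohomologically constant ($=\Vc_X$-constant) fibration $\eta_0$, i.e.\ Theorem~\ref{thm:maxDconstantfib}(3). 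For the inductive step, $\pi_{i+1}=\eta_i\circ\psi_{i+1}$ factors through $\phi_{i+1}:=\phi_i\circ\psi_{i+1}$ via $\alpha_i$, so for very general $y\in Y_i$ the fiber $(X_{i+1})_y$ is smooth projective and $\phi_{i+1}$ restricts there to a fibration onto (a resolution of) $\alpha_i^{-1}(y)$ whose general fiber is birational to a general fiber of $\phi$, hence has vanishing $\hpo$; so this restriction is cohomologically trivial, hence cohomologically constant, hence $\Vc_{(X_{i+1})_y}$-constant. By Lemma~\ref{lem:maxcohfibrationonfibers} the map $\eta_{i+1}\vert_{(X_{i+1})_y}$ is the maximal cohomologically constant fibration of $(X_{i+1})_y$, so by Theorem~\ref{thm:maxDconstantfib}(3) it factors through that restriction, and in particular the general fiber of $\phi_{i+1}$ is contracted by $\eta_{i+1}$. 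As this holds over a dense set of points of $Z$, the closure of the image of $(\eta_{i+1},\phi_{i+1})\cl X_{i+1}\dra Y_{i+1}\times Z$ is the graph of a rational map $\alpha_{i+1}\cl Z\dra Y_{i+1}$, completing the induction.

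The main obstacle is the inductive step of (2): one must verify that the locus collapsed by $\phi$ stays collapsed under each successive relative construction. This rests on the relative form of Lemma~\ref{lem:maxcohfibrationonfibers} together with the elementary but essential fact that the restriction of a cohomologically trivial fibration to a general fiber of a dominating fibration is again cohomologically trivial, and then on upgrading ``contracted on very general fibers'' to ``factors through globally''. The rest is the routine bookkeeping of the iterated ``very general'' conditions and of the resolutions needed to make the $\alpha_i$ and $\phi_i$ into morphisms.
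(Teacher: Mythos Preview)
Your argument is correct and follows essentially the same route as the paper. For (1) the two proofs are identical. For (2) the paper's proof is a one-line sketch --- ``(2) can be checked for each map $\eta_i$ using Theorem~\ref{thm:maxDconstantfib}(2)'' --- and your proposal is a careful unpacking of exactly this induction; the only cosmetic difference is that you invoke Theorem~\ref{thm:maxDconstantfib}(3) (factorization of $\Dc$-constant fibrations) on each fiber via Lemma~\ref{lem:maxcohfibrationonfibers}, whereas the paper cites part (2) (contraction of $\Dc$-constant subvarieties through a very general point), but since (3) is derived from (2) in the paper these amount to the same thing.
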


\begin{proof}
To show (1), as $Y_n\simeq_\bir Y_{n+1}\simeq_\bir\cdots$ it follows by Lemma~\ref{lem:maxcohfibrationonfibers} that for a very general fiber $X_y$ of $\eta_n$, the map from $X_y$ to a point is a cohomologically constant fibration. Thus $X_y$ is cohomologically trivial. (2) can be checked for each map $\eta_i$ using Theorem \ref{thm:maxDconstantfib}(2).
\end{proof}

This proves Theorem~\ref{MaxTrivFibs} for cohomologically trivial fibrations.

\begin{definition}
For a smooth projective variety $X$, the rational map $\eta_\infty \cl X \dra Y_\infty$ defined above is the \textbf{maximal cohomologically trivial fibration}. 
\end{definition}

\begin{remark}
As in Remark~\ref{genchowprop}, it follows from Lemma~\ref{ruledness} and Corollary~\ref{cor:compositionofcohfib} that the maximal cohomologically constant fibration and the maximal cohomologically trivial fibration are almost holomorphic.
\end{remark}

\appendix

\section{}\label{appendix}

Throughout this appendix, by a \textbf{curve} we mean a reduced, 1-dimensional scheme of finite type over an arbitrary field $k$. The point of this appendix is to prove that if $X$ is a projective variety over an arbitrary field $k$ and $\CH_0(X)$ (resp. $\CH_0(X)\otimes \QQ$) is supported on a curve, then $\CH_0(X)$ (resp. $\CH_0(X)\otimes \QQ$) is finite dimensional in the sense of Mumford (see Def.~\ref{MumFinite}). The main technical problems arise in considering reducible and singular curves. If
\[
C=C_1\sqcup C_2
\]
is a disjoint union of two projective curves then $\CH_0(C)$ is \textit{not} finite dimensional, as
\[
\CH_0(C)_0=\bigcup\limits_{k\in \ZZ}\left(\CH_0(C_1)_k\times \CH_0(C_2)_{-k}\right)
\]
contains divisors with unbounded degree on $C_1$. This issue can be overcome in two parts.
\begin{enumerate}
\item If $C$ is a connected curve, then $\CH_0(C)$ (resp. $\CH_0(C)\otimes \QQ$) is finite dimensional.
\item If $\CH_0(X)$ is supported on a curve, then it is also supported on a connected curve.
\end{enumerate}

\begin{proposition}
If $C=C_1\cup \cdots \cup C_m$ is a projective connected curve, then $\CH_0(C)$ (resp. $\CH_0(C)\otimes \QQ$) is finite dimensional.
\end{proposition}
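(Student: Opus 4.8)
The plan is to reduce the finite-dimensionality of $\CH_0(C)$ to finite-dimensionality of the Chow groups of the normalizations of the irreducible components, which are smooth projective curves, and for those the claim is classical (the Jacobian of a smooth projective curve represents degree-$0$ cycles modulo rational equivalence, so a fixed $d$ suffices). The two sources of difficulty are singularities of the components and the combinatorics of gluing along the connecting points.

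First I would set up the normalization $\nu\colon \widetilde{C} = \bigsqcup_{i=1}^m \widetilde{C_i} \to C$, which induces a surjection $\nu_*\colon \CH_0(\widetilde{C}) \to \CH_0(C)$ (every closed point of $C$ is the image of a closed point of $\widetilde{C}$, and rational equivalences push forward). This already bounds degrees \emph{componentwise}: any $0$-cycle on $C$ supported on $C_i$ of degree $e$ is rationally equivalent to $\nu_*$ of a cycle of degree $e$ on the smooth curve $\widetilde{C_i}$, hence (using the Jacobian of $\widetilde{C_i}$) to $\nu_*$ of a difference of two effective cycles of degree $g_i+1$ on $\widetilde{C_i}$, i.e. to a difference of effective degree-$(g_i+1)$ cycles on $C_i$. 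The remaining issue is exactly the one flagged in the appendix: an arbitrary degree-$0$ cycle on $C$ is a sum $\sum_i \alpha_i$ with $\alpha_i$ supported on $C_i$ and $\deg(\alpha_i)$ possibly large, even though $\sum_i \deg(\alpha_i) = 0$.

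The key step is to use connectedness to \emph{transfer degree between components}. Since $C$ is connected, for each pair of components $C_i, C_j$ there is a chain of components joining them, and at each node $p$ where two components meet, the point $p\in \CH_0(C)$ is simultaneously a degree-$1$ cycle on each of the two components through it. Hence I can choose, for each $i$, a fixed closed point $p_i \in C_i$, and observe that for any $i,j$ the classes $p_i$ and $p_j$ differ by moving through such nodes: $p_i = p_j \in \CH_0(C)$ is of course false in general, but $p_i - p_j$ lies in the image of $\CH_0(\widetilde{C})_0$ restricted to the components along the connecting chain, so $N(p_i - p_j)$ is bounded by a difference of effective cycles of bounded degree for a fixed $N$ (taking $N=1$ in the $\ZZ$-coefficient case requires only that each $p_i - p_j$ itself be so bounded, which follows since it is a $0$-cycle of degree $0$ supported on a connected \emph{subcurve} — so one runs the argument by induction on $m$). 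Concretely: given a general degree-$0$ cycle $\beta = \sum_i \alpha_i$, write $\alpha_i = \alpha_i' + \deg(\alpha_i)\, p_i$ where $\alpha_i'$ has degree $0$ and is supported on $C_i$, so is bounded by effective cycles of degree $\le g_i+1$ on $C_i$ by the previous paragraph; then $\beta \equiv \sum_i \alpha_i' + \sum_i \deg(\alpha_i)\, p_i$, and since $\sum_i \deg(\alpha_i)=0$ the second sum is a $\ZZ$-linear combination of the \emph{differences} $p_i - p_1$, each of which is a fixed class in $\CH_0(C)$ (independent of $\beta$) — but I still need these finitely many fixed classes to be representable by bounded effective differences, which is the content of the $m-1$ component case. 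So the clean logical order is: (i) handle smooth components via Jacobians; (ii) pass to normalization to handle singular components; (iii) induct on the number of components, using a node to split off one component and invoking connectedness of what remains (after possibly re-grouping). The $\QQ$-coefficient statement is formally easier and follows the same outline with $N$'s cleared.

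The main obstacle I expect is step (iii): making the induction on components genuinely work requires that after removing a component (or contracting along a node) the remaining curve is still connected, which is not automatic — a node can be a cut vertex. The fix is to order the components so that $C_1 \cup \cdots \cup C_k$ is connected for every $k$ (always possible for a connected curve, e.g. by a spanning-tree argument on the dual graph), peel off $C_m$, which meets $C_1\cup\cdots\cup C_{m-1}$ in at least one point $p$, use that point to absorb the entire degree of the $C_m$-part into the connected subcurve $C_1\cup\cdots\cup C_{m-1}$, and apply the inductive hypothesis there. Tracking the resulting bound $d$ explicitly (it will be something like $\sum_i(g_i+1)$ plus a correction from the number of nodes used) is routine once the structure is in place.
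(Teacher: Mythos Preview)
Your inductive approach in the final paragraph is correct and yields a valid proof, but it differs from the paper's argument and there is one genuine slip earlier.

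The paper does not induct on the number of components. After passing to the normalization $\nu\colon D=\bigsqcup D_i\to C$, it considers the multidegree map $\underline{\deg}\colon\CH_0(D)\to\ZZ^m$ and argues that $\ker(\nu_*)\to\ker(\Sigma)\cong\ZZ^{m-1}$ has full-rank image (this is exactly where connectedness enters: a point of $C$ lying on $C_i\cap C_j$ yields an element of $\ker(\nu_*)$ with nonzero $i$th and $j$th multidegrees). Hence there is an $N$ with $\mathrm{Im}(\nu_*)\cap\CH_0(C)_0\subset\nu_*(\Delta_N)$, where $\Delta_N=\{\beta\in\CH_0(D)_0:|\deg_i\beta|\le N\text{ for all }i\}$; a single Riemann--Roch application on each $D_i$ then bounds $\Delta_N$ by effective differences of fixed degree. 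Your spanning-tree induction accomplishes the same bookkeeping one component at a time and is arguably more transparent about how connectedness is used. Do note that your intermediate decomposition $\sum_i\deg(\alpha_i)(p_i-p_1)$ does not by itself give a bound, since the coefficients $\deg(\alpha_i)$ are unbounded even though the classes $p_i-p_1$ are fixed; you were right to discard it in favor of the induction.

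The slip: over an arbitrary field the pushforward $\nu_*\colon\CH_0(\widetilde C)\to\CH_0(C)$ need not be surjective. A singular point $p\in C$ can have preimages with strictly larger residue field, so $\nu_*$ only produces multiples of $[p]$. The paper deals with this by comparing the localization sequences for $D$ and $C$ relative to the smooth locus and concluding that the cokernel of $\nu_*$ is finite, which is enough for finite-dimensionality. Your argument goes through once you make this correction.
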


\begin{proof}
First note that the map of sets:
\[
\CH_0(C)\times \QQ\ra \CH_0(C)\otimes \QQ
\]
is surjective. Thus, if $\CH_0(C)$ is finite dimensional, then so is $\CH_0(C)\otimes \QQ$.

Let
\[
\nu\cl D\ra C
\]
be the normalization of $C$, i.e. $D=D_1\sqcup \cdots \sqcup D_m$ where $D_i$ is the normalization of $C_i$. Let $U\subset C$ be the regular locus. Then $U$ is nonempty in each component $C_i$, and the map $D\ra C$ is an isomorphism over $U$. Let $a$ (resp. $b$) be the number of closed points in $D\setminus U$ (resp. $C\setminus U$). Consider the localization sequences:
\[
\begin{tikzcd}
\ZZ^a\arrow[r]\arrow[d,"\phi"]&\CH_0(D)\arrow[r]\arrow[d,"\nu_*"]&\CH_0(U)\arrow[d,"\cong"]\arrow[r]&0\\
\ZZ^b\arrow[r]&\CH_0(C)\arrow[r]&\CH_0(U)\arrow[r]&0.
\end{tikzcd}
\]
The rows of the above diagram are exact. Moreover, the image of $\phi$ is full rank in $\ZZ^b$ (every singularity point in $C$ has a nonempty preimage in $D$). As a consequence, the cokernel $\CH_0(C)/\mathrm{Im}(\nu_*)$ is finite, thus the cokernel doesn't affect the finite dimensionality of $\CH_0(C)$.

So, to prove $\CH_0(C)$ is finite dimensional is suffices to show there exists an integer $d>0$ such that the difference map:
\[
\sym^d(D)(k)\times \sym^d(D)(k)\ra\CH_0(C)
\]
contains $\mathrm{Im}(\nu_*)\cap \CH_0(C)_0$. Consider the following commutative diagram of degrees:
\[
\begin{tikzcd}
0\arrow[r]&\ker(\nu_*)\arrow[r]\arrow[d]&\CH_0(D)\arrow[r,"\nu_*"]\arrow[d,"\underline{\deg}"]&\CH_0(C)\arrow[d,"\deg"]&\\
0\arrow[r]&\ZZ^{m-1}\arrow[r]&\ZZ^m\arrow[r,"\sum"]&\ZZ\arrow[r]&0
\end{tikzcd}
\]
where $\underline{\deg} = (\deg_1,\cdots, \deg_m)$ and $\deg_i$ is the degree map on $D_i$. It is easy to see that the image of $\underline{\deg}$ has full rank, and it follows that the image of $\ker(\nu_*)$ in $\ZZ^{m-1}$ has full rank. Let
\[
\Delta_N:=\{\beta\in \CH_0(D)_0 \mid|\deg_i(\beta)|\le N\text{ }\forall i\}\subset \CH_0(D)_0,
\]
i.e. $\Delta_N$ is the subset of total degree 0 cycles such that each $|\deg_i|$ is bounded by $N$. As $\ker(\nu_*)$ has full rank in $\ZZ^{m-1}$, it follows that there exists $N\ge 0$ such that any 
\[
\mathrm{Im}(\nu_*)\cap \CH_0(C)_0\subset\nu_*(\Delta_N)\subset \CH_0(C).
\]
The proposition now follows from the following lemma.
\end{proof}

Let $D=D_1\sqcup \cdots \sqcup D_m$ and $\Delta_N$ be as in the proof of the proposition.

\begin{lemma}
There exists $d>0$ such that the image of the difference map
\[
\sym^d(D)(k)\times \sym^d(D)(k)\ra \CH_0(D)_0
\]
contains $\Delta_N.$
\end{lemma}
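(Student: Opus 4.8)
The plan is to handle each smooth projective curve $D_i$ separately via Riemann--Roch, using a single closed point on each component to ``pad'' degrees. Concretely, I would fix for each $i$ a closed point $p_i\in D_i$ and set $\delta_i:=\deg(p_i)=[k(p_i):k]$. The first input is the following consequence of Riemann--Roch (and Serre duality) on $D_i$: there is a nonnegative constant $\kappa_i$, depending only on $D_i$, such that any line bundle on $D_i$ of degree $\ge\kappa_i$ has a nonzero global section; and since $H^0(D_i,-)$ is a $k$-vector space, such a section is defined over $k$, so its divisor of zeros is an effective $0$-cycle on $D_i$, defined over $k$, lying in the prescribed class and of degree equal to the degree of the line bundle. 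With the $\kappa_i$ in hand I would set
\[
m_i:=\left\lceil (\kappa_i+N)/\delta_i\right\rceil,\qquad d:=\sum_{i=1}^m m_i\delta_i,
\]
and observe that $d$ depends only on $D$ and $N$, not on the element of $\Delta_N$ one wishes to realize.

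Given $\beta=\sum_i\beta_i\in\Delta_N$ with $\beta_i\in\CH_0(D_i)$ and $a_i:=\deg(\beta_i)$, so that $|a_i|\le N$ for all $i$ and $\sum_i a_i=0$, I would then proceed as follows. For each $i$ the class $\beta_i+m_i[p_i]$ has degree $a_i+m_i\delta_i\ge m_i\delta_i-N\ge\kappa_i$, hence by the first step it is represented by an effective $0$-cycle $A_i$ on $D_i$, defined over $k$, of degree $a_i+m_i\delta_i$. Setting $A:=\sum_i A_i$ and $B:=\sum_i m_i p_i$, one has $\deg B=\sum_i m_i\delta_i=d$ and $\deg A=\sum_i(a_i+m_i\delta_i)=d+\sum_i a_i=d$, so $(A,B)$ is a point of $\sym^d(D)(k)\times\sym^d(D)(k)$; and $[A]-[B]=\sum_i\bigl([A_i]-m_i[p_i]\bigr)=\sum_i\beta_i=\beta$ in $\CH_0(D)_0$. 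Thus every element of $\Delta_N$ lies in the image of the difference map, which is the claim.

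Two points need care, and the second is where the only real subtlety lies. First, one must ensure the effective cycles $A_i$ are defined over $k$ rather than merely over $\overline{k}$; this is exactly why the argument must be phrased through Riemann--Roch over $k$ and the zero divisor of a $k$-rational section, as above. Second --- and this is the step I expect to be the main obstacle --- one must arrange that the two effective cycles $A$ and $B$ have the \emph{same} total degree $d$, \emph{independent of $\beta$}, so that they genuinely define a point of $\sym^d(D)\times\sym^d(D)$ for one fixed $d$. This works precisely because the padding terms $m_i[p_i]$ cancel in the difference while the fiber-degree corrections $a_i$ sum to $0$ --- which is the whole reason the proof of the Proposition reduced matters to the set $\Delta_N$ (bounded degree on each $D_i$, total degree zero) rather than to an arbitrary bounded subset of $\CH_0(D)_0$. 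Everything else reduces to the standard estimate $\deg L\gg 0\Rightarrow H^0(D_i,L)\neq 0$ on a smooth projective curve.
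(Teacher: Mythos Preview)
Your proof is correct and follows essentially the same approach as the paper: both arguments add a fixed, sufficiently positive effective $0$-cycle to $\beta$ and invoke Riemann--Roch componentwise to make the result effective, then express $\beta$ as the difference. The paper phrases this with a single ample class $A$ on $D$ and a large multiple $\ell A$ (taking $d=\ell\cdot\deg A$), whereas you make the padding divisor explicit as $\sum_i m_i p_i$; your version is slightly more careful in spelling out why the effective representatives are defined over $k$ and why the two sides have the \emph{same} degree $d$, points the paper leaves implicit.
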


\begin{proof}
Let $\beta\in \Delta_N$ be a degree 0 divisor such that $|\deg_i(\beta)|\le N$ for each $i$. The point is to show that any such $\beta$ is the difference of effective divisors of bounded degree. Let $A\in \CH_0(D)$ be an ample divisor. By applying Riemann-Roch for each component $D_i$, there exists an $\ell\ge 0$ such that $\Oc_D(\ell A + \beta)$ is effective for any $\beta \in \Delta_N$. As a consequence, we can take $d=\ell\cdot \deg(A)$ in the statement.
\end{proof}

Lastly we need the following easy lemma.

\begin{lemma}
Let $X$ be any projective variety over $k$. Any curve $C\subset X$ is contained in a connected curve $C'\subset X$.
\end{lemma}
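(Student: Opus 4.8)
The plan is to reduce to the classical fact that any two closed points of an irreducible projective variety lie on an irreducible curve, and then glue. Since $C$ is a nonempty curve, $X$ has dimension at least $1$, and $X$ is connected because it is irreducible. Write $C=C_1\sqcup\cdots\sqcup C_r$ for the decomposition into connected components, each a connected curve, and pick a closed point $p_i\in C_i$. If $r=1$ we are done with $C'=C$, so assume $r\ge 2$. It then suffices to produce, for each $i=2,\dots,r$, an irreducible (hence connected) curve $Z_i\subseteq X$ through $p_1$ and $p_i$: the reduced closed subscheme $C':=C\cup Z_2\cup\cdots\cup Z_r$ of $X$ is one-dimensional, hence a curve in the sense of this appendix, and it is connected since $C_1\cup Z_2\cup\cdots\cup Z_r$ is connected (each $Z_i$ is connected and meets $C_1$ in $p_1$) and each remaining $C_i$ is connected and meets this set in $p_i\in Z_i$.

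Thus the entire content is the claim: given two closed points $p,q$ of an irreducible projective $k$-variety $X$ with $\dim X\ge 1$, there is an irreducible curve in $X$ containing $p$ and $q$. I would prove this by induction on $n=\dim X$. If $n=1$, take the curve to be $X$ itself. If $n\ge 2$, fix an ample line bundle $A$ on $X$, sufficiently positive that $A\otimes\mathcal{I}_{\{p,q\}}$ is globally generated and the linear system $|A\otimes\mathcal{I}_{\{p,q\}}|$ of divisors containing $p$ and $q$ separates points and tangent vectors away from $\{p,q\}$ — both hold for $A$ a sufficiently high power of any ample bundle, by Serre's theorems. Then the base locus of this system is exactly $\{p,q\}$, the associated rational map $X\dra\PP^M$ is a locally closed immersion on $X\setminus\{p,q\}$, and in particular its image has dimension $n\ge 2$. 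By Bertini's irreducibility theorem (which holds over an arbitrary field), a general member $D$ of $|A\otimes\mathcal{I}_{\{p,q\}}|$ has $D\setminus\{p,q\}$ irreducible; since $D$ is a nonzero effective Cartier divisor on the irreducible variety $X$ it is pure of dimension $n-1\ge 1$, so the finite set $\{p,q\}$ contains no component of $D$ and therefore $D=\overline{D\setminus\{p,q\}}$ is an irreducible subvariety of dimension $n-1$ containing $p$ and $q$. If $n=2$ this $D$ is already an irreducible curve through $p$ and $q$; if $n\ge 3$, the inductive hypothesis applied to $D$ yields an irreducible curve in $D\subseteq X$ through $p$ and $q$. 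Applying this claim to the pairs $(p_1,p_i)$ produces the curves $Z_i$, and the gluing above completes the argument.

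The only delicate point is the inductive step: checking that for $A$ sufficiently positive the system $|A\otimes\mathcal{I}_{\{p,q\}}|$ has base locus precisely $\{p,q\}$ (so that the image of the associated map really has the full dimension $n$), and then invoking Bertini's irreducibility theorem over a possibly imperfect or non-closed field. Both are standard. If one prefers to avoid the subtleties of Bertini over general fields, it is enough to know that a general member $D$ is \emph{connected} — which suffices here, since the $Z_i$ need only be connected — and this follows from the connectedness of ample divisors on projective varieties of dimension $\ge 2$; one then carries ``connected'' rather than ``irreducible'' through the induction.
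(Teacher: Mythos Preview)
Your approach is correct and essentially the same as the paper's: reduce to the fact that any two closed points of $X$ lie on a connected curve, obtained by cutting down with sufficiently ample divisors through both points, and then glue. The paper's proof is a one-liner invoking exactly this classical fact (``take a complete intersection of sufficiently ample divisors through both points''), which is precisely the connectedness alternative you describe at the end; your inductive Bertini-irreducibility argument gives a bit more than is needed.
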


\begin{proof}
Any two closed points in $X$ can be connected via a connected curve (take a complete intersection of sufficiently ample divisors through both points). The lemma follows.
\end{proof}

Thus we have shown:

\begin{corollary}\label{cor:chow0finitedim}
If $X$ is a variety over an arbitrary field $k$ and $\CH_0(X)$ (resp. $\CH_0(X)\otimes \QQ$) is supported on a curve $C\subset X$, then $\CH_0(X)$ (resp. $\CH_0(X)\otimes \QQ$) is finite dimensional.
\end{corollary}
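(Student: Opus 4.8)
The plan is to deduce Corollary~\ref{cor:chow0finitedim} from the preceding three results in the appendix, assembling them in sequence. First I would invoke the last lemma: since $\CH_0(X)$ (resp. $\CH_0(X)\otimes\QQ$) is supported on some curve $C\subset X$, and $C$ is contained in a connected curve $C'\subset X$, the map $\CH_0(C')\to\CH_0(X)$ (resp. after $\otimes\QQ$) factors the surjection $\CH_0(C)\to\CH_0(X)$ and is therefore itself surjective. So without loss of generality the supporting curve may be taken to be connected.

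Next I would apply the Proposition: for a connected projective curve $C'=C'_1\cup\cdots\cup C'_m$, the group $\CH_0(C')$ (resp. $\CH_0(C')\otimes\QQ$) is finite dimensional in the sense of Mumford, i.e. there is some $d>0$ so that every degree-$0$ cycle (resp. every element of $\CH_0(C')_0\otimes\QQ$) is a difference of effective $0$-cycles of degree $d$ (resp. a $\QQ$-multiple of such a difference). The final step is to transport this finite dimensionality along the surjection $\CH_0(C')_0\to\CH_0(X)_0$ (resp. the $\QQ$-tensored version): given a class in $\CH_0(X)_0$, lift it to $\CH_0(C')_0$, write the lift as a difference of effective degree-$d$ cycles on $C'$, and push forward via $C'\hookrightarrow X$ to obtain a difference of effective degree-$d$ cycles on $X$ representing the original class. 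One subtlety to address is that a degree-$0$ class in $\CH_0(X)_0$ need not lift to a \emph{degree}-$0$ class on $C'$ when $C'$ is reducible; but this is exactly the point handled inside the proof of the Proposition (the analysis of $\ker(\nu_*)$ and the bounded set $\Delta_N$), so it suffices to note that the surjection $\CH_0(C')\to\CH_0(X)$ carries $\CH_0(C')_0$ onto $\CH_0(X)_0$ because a connected curve has $\CH_0(C')$ surjecting onto $\ZZ$ via degree with a section, hence any degree-$0$ class on $X$ lifts to a degree-$0$ class on $C'$.

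I do not expect any genuine obstacle here: the corollary is a formal consequence of the Proposition and the two lemmas, with the only care needed being the compatibility of the degree maps under $C'\hookrightarrow X$ and the fact that pushforward of effective cycles is effective and preserves degree. The proof should read essentially:

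\begin{proof}
By the preceding lemma we may assume $C$ is connected. The surjection $\CH_0(C)\to\CH_0(X)$ restricts to a surjection $\CH_0(C)_0\to\CH_0(X)_0$, since for a connected projective curve the degree map $\CH_0(C)\to\ZZ$ is surjective, so any degree-$0$ class on $X$ has a degree-$0$ lift to $C$. By the Proposition, $\CH_0(C)$ (resp. $\CH_0(C)\otimes\QQ$) is finite dimensional in the sense of Mumford: there is $d>0$ such that every class in $\CH_0(C)_0$ (resp. $\CH_0(C)_0\otimes\QQ$) is $\alpha$ times a difference $\sum a_i-\sum b_j$ with $\sum a_i,\sum b_j\in\sym^d(C)(k)$ and $\alpha=1$ (resp. $\alpha\in\QQ$). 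Pushing forward along the closed immersion $C\hookrightarrow X$ preserves effectivity and degree, so the image of such a class in $\CH_0(X)_0$ (resp. $\CH_0(X)_0\otimes\QQ$) is $\alpha$ times a difference of elements of $\sym^d(X)(k)$. Hence the difference map of Definition~\ref{MumFinite} is surjective with the same $d$, and $\CH_0(X)$ (resp. $\CH_0(X)\otimes\QQ$) is finite dimensional.
\end{proof}
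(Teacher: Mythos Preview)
Your approach is correct and matches the paper's: the corollary is stated in the paper as an immediate consequence of the preceding proposition and two lemmas (``Thus we have shown:''), and your proof simply spells this out.

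One small wobble worth fixing: your justification for why $\CH_0(C)_0\to\CH_0(X)_0$ is surjective is not right. You invoke that the degree map $\CH_0(C)\to\ZZ$ is surjective for a connected projective curve, but over an arbitrary field $k$ this can fail (e.g.\ a pointless conic over $\RR$). Fortunately you don't need it: pushforward along the closed immersion $C\hookrightarrow X$ preserves degree, so \emph{every} lift of a degree-$0$ class on $X$ already has degree $0$ on $C$. That is the correct (and simpler) reason the restriction $\CH_0(C)_0\to\CH_0(X)_0$ is surjective.
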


\bibliographystyle{siam} 
\bibliography{chowquotients} 

\end{document}